\newtheorem*{theorem*}{Theorem}
\newtheorem{theorem}{Theorem}[section]
\newtheorem{lemma}[theorem]{Lemma}
\newtheorem{corollary}[theorem]{Corollary}
\newtheorem{proposition}[theorem]{Proposition}
\newtheorem{definition}[theorem]{Definition}
\newtheorem{example}[theorem]{Example}
\newtheorem{remark}[theorem]{Remark}
\newtheorem{question}[theorem]{Question}
\newcommand{\acknowledgments}{\section*{Acknowledgments}}
\begin{document}
\title[On $S$-$n$-absorbing ideals]
{On $S$-$n$-absorbing ideals}

\author [H. Baek] {Hyungtae Baek}
\address{(Baek) School of Mathematics,
Kyungpook National University, Daegu 41566,
Republic of Korea}
\email{htbaek5@gmail.com}

\author [H. S. Choi] {Hyun Seung Choi}
\address{(Choi) Department of Mathematics education,
Kyungpook National University, Daegu 41566,
Republic of Korea}
\email{hchoi21@knu.ac.kr}

\author [J. W. Lim] {Jung Wook Lim}
\address{(Lim) Department of Mathematics,
College of Natural Sciences,
Kyungpook National University, Daegu 41566,
Republic of Korea}
\email{jwlim@knu.ac.kr}

\thanks{Words and phrases: $S$-$n$-absorbing ideals, $n$-absorbing ideals, $S$-prime ideals, amalgamation of rings}

\thanks{$2020$ Mathematics Subject Classification: 13A15}

\begin{abstract}
Let $R$ be a commutative ring with identity,
$S$ a multiplicative subset of $R$ and
$I$ an ideal of $R$ disjoint from $S$.
In this paper, we introduce the notion of
an $S$-$n$-absorbing ideal which is a generalization of
both the $S$-prime ideals and $n$-absorbing ideals.
Moreover, we investigate the basic properties,
quotient extension, existence and amalgamation of $S$-$n$-absorbing ideals.
\end{abstract}

\maketitle

\section{Introduction}

Throughout this paper, $R$ is a commutative ring with identity,
$S$ is a (not necessarily saturated) multiplicative subset of $R$.
(For the sake of clarity,
we use $D$ instead of $R$ when $R$ is an integral domain.)
Also, $\mathbb{N}, \mathbb{N}_0, \mathbb{Z}$ and $\mathbb{Q}$ always denote
the set of natural numbers, nonnegative integers, integers and rational numbers, respectively.

In \cite{q80}, Querre showed that for an integrally closed domain $D$
and an integral ideal $B$ of $D[X]$ such that $B\cap D\neq0$,
the divisorial closure of $B$ equals the divisorial closure of $A_B[X]$,
where $A_B$ is the ideal of $D$ generated by coefficients of elements of $B$.
Anderson, Kwak and Zafrullah \cite{akz} introduced
``almost finitely generated ideals" and ``agreeable domains"
to study Querre's characterization of divisorial ideals.
In their 2002 paper, Anderson and Dumitrescu \cite{ad02}
extended these notions to arbitrary ideals of
a commutative ring with identity,
introducing $S$-finite ideals and $S$-Noetherian rings.
Namely, given a multiplicative subset $S$ and an ideal $I$ of a commutative ring $R$ with identity.
We say that $I$ is {\it $S$-finite}
if $sI\subseteq J\subseteq I$ for some $s\in S$ and a finitely generated ideal $J$ of $R$.
$R$ is {\it $S$-Noetherian} if every ideal of $R$ is $S$-finite.
Anderson and Dumitrescu also generalized several
well-known results of Noetherian rings to $S$-Noetherian rings,
including Eakin-Nagata theorem and Hilbert basis theorem \cite[Corollary 7, Proposition 10]{ad02}.

This initiated a series of papers that investigate $S$-Noetherianity on a specific class of rings,
and generalize several notions in multiplicative ideal theory to their $S$-versions.
For instance, Liu \cite{l07} examined when a generalized power series ring becomes $S$-Noetherian.
Lim and Oh \cite{lo14,lo15} studied
how $S$-Noetherian property behaves on amalgamated algebras along an ideal,
and composite ring extensions.
Kim, Kim and Lim introduced the notion of $S$-strong Mori domains extending
that of strong Mori domains \cite{kkl14},
while Hamed and Hizem \cite{hh} generalized GCD-domains to $S$-GCD-domains
which were later investigated by Anderson, Hamed and Zafrullah \cite{ahz}.

 In 2020, Hamed and Malek \cite{Hamed} defined $S$-prime ideals as a generalization of prime ideals.
Given a multiplicative subset $S$ and an ideal $I$ of a commutative ring $R$ disjoint from $S$,
$I$ is an {\it $S$-prime ideal} of $R$ if there exists $s\in S$ such that for each $a,b\in R$ such that $ab\in I$,
either $sa\in I$ or $sb\in I$. 
There are numerous different generalizations of prime ideals,
and among them, we focus on the $n$-absorbing ideals of Anderson and Badawi \cite{Anderson}.
For a natural number $n$ and an ideal $I$ of a commutative ring $R$,
we say that $I$ is an {\it $n$-absorbing ideal} of $R$
if for each $r_1,\dots, r_{n+1}\in R$ such that $r_1\cdots r_{n+1}\in I$,
there exists $1 \leq j \leq n+1$ such that $\prod_{\substack{1 \leq i \leq n+1 \\ i\neq j}}r_i\in I$.
The study of $n$-absorbing ideals quickly became an active research topic,
and several papers concerning properties of $n$-absorbing ideals were published (see the survey paper \cite{b23} and its reference list).
Considering these developments, we naturally ask whether $S$-prime ideals and $n$-absorbing ideals can be dealt with concurrently,
and in this paper, we define $S$-$n$-absorbing ideals of a commutative ring $R$ for a multiplicative subset $S$ and a natural number $n$.
It is easy to see that both an $S$-prime ideal, and
an $n$-absorbing ideal disjoint from $S$ are $S$-$n$-absorbing ideals for each $n\in\mathbb{N}$.
We show that several interesting properties of $S$-prime ideals and
$n$-absorbing ideals hold for $S$-$n$-absorbing ideals,
while there exist properties that hold for the former that do not extend smoothly to the latter.
To summarize, we present the following diagram.

\begin{center}
\begin{tikzpicture}[scale=0.5, node distance=1.8cm, thick, nodes={draw, rectangle, align=center, minimum width=3cm, minimum height=0.5cm}, arrows=-{Stealth[scale=1.2]}, shorten >=3pt, shorten <=3pt]
    \node (prime) {Prime ideal};
    \node[right=1.5cm of prime, draw=white] (blank) {\color{white}{\tiny blank}};
    \node[above=0cm of blank] (nabsorbing) {$n$-absorbing ideal};
    \node[below=0cm of blank] (sprime) {$S$-prime ideal};
    \node[right=1.5cm of blank, draw=blue] (snabsorbing) { $S$-$n$-absorbing ideal};
    \draw (prime.east) -- (nabsorbing.west);
    \draw (prime.east) -- (sprime.west);
    \draw (nabsorbing.east) -- (snabsorbing.west);
    \draw (sprime.east) -- (snabsorbing.west);
\end{tikzpicture}
\end{center}

This paper consists of five sections including introduction.
In Section \ref{sec 2}, we study the basic properties of $S$-$n$-absorbing ideals.
In Section \ref{sec 3}, we analyze a theorem on the characterization of $S$-prime ideals,
and partially extend it to $S$-$n$-absorbing ideals.
In particular, we show that the statement of the theorem for
$S$-prime ideals can be extended to $S$-$n$-absorbing ideals
if $R$ is either a Laskerian ring or a locally divided ring.
More precisely, we show that for a multiplicative subset $S$ of $R$,
if $R$ is an $S$-Laskerian ring,
then $IR_S$ is an $n$-absorbing ideal of $R_S$ if and only if
$I$ is an $S$-$n$-absorbing ideal of $R$ for each ideal $I$ of $R$ disjoint from $S$ (Theorem \ref{one-to-one cor(Laskerian)}).
Section \ref{sec 4} is devoted to the study of the minimal positive integer $n$ that
makes an ideal $I$ of a ring $R$ $S$-$n$-absorbing,
and the set of all such $n$, collected from every ideal of $R$ disjoint from $S$.
In this direction, we seek the conditions on a ring such that $\omega_S(R) \subseteq \mathbb{N}$.
In Section \ref{sec 5}, the final section of this paper,
focuses on $S$-$n$-absorbing ideals of the amalgamation of rings.
Specifically, it is shown that when $S$-$n$-absorbing ideals are extended in amalgamation,
the resulting ideals are not necessarily $S$-$n$-absorbing ideals.
On the other hand, a sufficient condition of an ideal primary to the maximal ideal of a local ring
being $S$-$n$-absorbing ideal when extended to an amalgamation of rings is given
(Proposition \ref{local amalgam}).
Several examples are given to illustrate the theorems of this paper.
For any undefined terms, we refer the reader to \cite{G}.

\section{Basic properties of $S$-$n$-absorbing ideals}\label{sec 2}

In this section, we define and consider some basic properties of $S$-$n$-absorbing ideals.
More precisely, we investigate when well-known ideals
become $S$-$n$-absorbing ideals,
and we also examine the relationship between $S$-$n$-absorbing ideals and minimal prime ideals.

\begin{definition}
{\rm
Let $R$ be a commutative ring with identity,
$S$ a multiplicative subset of $R$ and
$I$ an ideal disjoint from $S$.
Then $I$ is an {\it $S$-$n$-absorbing ideal} if
there exists $s \in S$ such that
$r_1,\dots, r_{n+1} \in R$ with $r_1 \cdots r_{n+1} \in I$ implies
$s \prod_{\substack{1 \leq i \leq n+1 \\ i \neq j}}r_i \in I$ for some $1 \leq j \leq n+1$.
In this case, we say that $I$ is {\it associated} to $s$.
}
\end{definition}

\begin{example}
{\rm
(1) If $S$ consists of units of $R$, then the $n$-absorbing ideals and $S$-$n$-absorbing ideals coincide.

(2) Every $S$-prime ideal is an $S$-$n$-absorbing ideal for all $n \in \mathbb{N}$.

(3) Every $n$-absorbing ideal is an $S$-$n$-absorbing ideal for any multiplicative subset $S$ of $R$.

(4) In general, the converse of (2) and (3) do not hold.
Let $S = \{4^n \,|\, n \in \mathbb{N}\}$.
Then $S$ is a multiplicative subset of $\mathbb{Z}[X]$.
Consider the ideal $I := 2X^2\mathbb{Z}[X]$.
Let $f,g,h \in \mathbb{Z}[X]$ with $fgh \in I$.
Since $X^2\mathbb{Z}[X]$ is a $X\mathbb{Z}[X]$-primary ideal, $X^2$ divides one of the $fg, gh, fh$.
Therefore at least one of $4fg,4gh,4fh$ must be an element of $I$.
This implies that $I$ is an $S$-$2$-absorbing ideal of $R$.
However, $I$ is not an $S$-prime ideal of $R$.
Indeed, $2X \cdot X \in I$, but neither $4^nX$ nor $4^n(2X)$ belong to $I$ for each $n\in\mathbb{N}$.
Moreover, $I$ is not a $2$-absorbing ideal of $R$,
because $2\cdot X\cdot X \in I$, but neither $2X$ nor $X^2$ belong to $I$.
}
\end{example}

\begin{proposition}\label{basic property}
Let $R$ be a commutative ring with identity and
let $S,S_1,\dots,S_m$ be multiplicative subsets of $R$.
Then the following assertions hold.
\begin{enumerate}
\item[(1)]
Let $I$ be an ideal of $R$ disjoint from $S$ and
let $J$ be an ideal of $R$ such that $J \cap S \neq \emptyset$.
If $I$ is an $S$-$n$-absorbing ideal of $R$,
then $IJ$ is an $S$-$n$-absorbing ideal of $R$.
\item[(2)]
Let $T$ be a commutative ring with identity containing $R$ and
let $J$ be an ideal of $T$ disjoint from $S$.
If $J$ is an $S$-$n$-absorbing ideal of $T$,
then $J \cap R$ is an $S$-$n$-absorbing ideal of $R$.
\item[(3)]
For each $1 \leq j \leq m$,
let $I_j$ be an ideal of $R$ disjoint from $S_j$.
If for each $1 \leq k \leq m$,
$I_k$ is an $S_k$-$n_k$-absorbing ideal of $R$,
then $I_1 \cap \cdots \cap I_m$ is an $S$-$n$-absorbing ideal of $R$,
where $S : = S_1 \cdots S_m = \{s_1\cdots s_m \,|\, s_k \in S_k {\rm \ for \ all \ } 1 \leq k \leq m \}$ and
$n = n_1 + \cdots + n_m$.
\item[(4)]
Let $I_1,\dots,I_m$ be ideals of $R$ disjoint from $S$.
If for each $1 \leq i \leq m$,
$I_i$ is an $S$-$n_i$-absorbing ideal of $R$,
then $I_1 \cap \cdots \cap I_m$ is an $S$-$n$-absorbing ideal of $R$,
where $n = n_1 + \cdots + n_m$.
\item[(5)]
Let $I$ be an ideal of $R$ disjoint from $S$.
If $I$ is an $S$-$n$-absorbing ideal of $R$ and
associated to $s$ for some $s \in S$,
then $\sqrt{I}$ is an $S$-$n$-absorbing ideal of $R$, and
for each $a \in \sqrt{I}$, there exists $t \in S$ such that $ta^n \in I$.
\end{enumerate}
\end{proposition}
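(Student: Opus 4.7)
The plan is to prove each of the five parts in turn, with (1) and (2) following directly from the defining property, (3) and (4) requiring an extraction lemma combined with pigeonhole, and (5) splitting into a radical-closure argument and an induction.

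For (1), I take the witness to be $st$, where $s$ witnesses $I$ and $t\in J\cap S$: any $r_1\cdots r_{n+1}\in IJ\subseteq I$ admits some $j$ with $s\prod_{i\neq j}r_i\in I$, and multiplying by $t\in J$ puts it in $IJ$, while disjointness $IJ\cap S\subseteq I\cap S=\emptyset$ is automatic. For (2), the same witness $s$ serves: if $r_1,\ldots,r_{n+1}\in R$ have product in $J\cap R\subseteq J$, the extracted element $s\prod_{i\neq j}r_i$ lies in both $J$ and $R$, and $(J\cap R)\cap S=J\cap S=\emptyset$.

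For (3), the key technical step is the following extraction lemma: if $I$ is $S$-$m$-absorbing with witness $s$ and $b_1\cdots b_k\in I$ with $k\geq m+1$, then there exist a subset $A\subseteq\{1,\ldots,k\}$ of size $m$ and some $t\in S$ with $t\prod_{i\in A}b_i\in I$. I prove this by induction on $k$: the base case is the definition; the step collapses $b_1 b_2$ into one factor and invokes the inductive hypothesis, and when the collapsed factor lands in the extracted subset a second application of $S$-$m$-absorbing (using the grouping $(tb_1,b_2,\ldots)$) reduces the subset back to size $m$. Equipped with the lemma, for each $k$ I extract from $r_1\cdots r_{n+1}\in I_k$ a subset $A_k\subseteq\{1,\ldots,n+1\}$ of size $n_k$ and some $t_k\in S_k$ with $t_k\prod_{i\in A_k}r_i\in I_k$. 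Since $\sum_k|A_k|=n<n+1$, pigeonhole gives $j\notin\bigcup_k A_k$, so $\prod_{i\neq j}r_i$ contains $\prod_{i\in A_k}r_i$ as a factor for every $k$ and thus $t_1\cdots t_m\prod_{i\neq j}r_i\in I_1\cap\cdots\cap I_m$; the witness is $t_1\cdots t_m\in S_1\cdots S_m=S$. Part (4) is the special case $S_1=\cdots=S_m=S$.

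For (5), I first show $\sqrt I$ is $S$-$n$-absorbing with witness $s$: given $r_1\cdots r_{n+1}\in\sqrt I$, some power $(r_1\cdots r_{n+1})^N=r_1^N\cdots r_{n+1}^N$ lies in $I$; applying $S$-$n$-absorbing to these $n+1$ factors gives $j$ with $s\bigl(\prod_{i\neq j}r_i\bigr)^N\in I$, and multiplying by $s^{N-1}$ yields $(s\prod_{i\neq j}r_i)^N\in I$, hence $s\prod_{i\neq j}r_i\in\sqrt I$; the disjointness $\sqrt I\cap S=\emptyset$ follows because $S$ is multiplicative. For the claim $sa^n\in I$, I would induct on the least $k$ with $a^k\in I$: when $k\leq n$, $a^n\in I$ directly; when $k=n+1$, $S$-$n$-absorbing applied to the $(n+1)$-fold product of $a$'s gives $sa^n\in I$; when $k\geq n+2$, the grouping $a^k=(a^{k-n},a,\ldots,a)$ together with $S$-$n$-absorbing either removes the compound factor to produce $sa^n\in I$ immediately or reduces to a strictly smaller exponent, and the induction closes. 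The main obstacle is the extraction lemma: a naive induction on the number of factors leaves behind an extra factor at each reduction step, so one must carefully regroup---treating a compound $tb_1$ as a single factor in order to reinvoke $S$-$m$-absorbing---to reach a subset of exactly size $m$ while keeping the multiplicative witness inside $S$.
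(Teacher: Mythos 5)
Parts (1), (2), (4) and the radical half of (5) are correct and essentially identical to the paper's arguments. In part (3) you are in fact more careful than the paper: the paper silently extracts, from a product of $n+1$ factors lying in $I_k$, a subproduct of only $n_k$ factors that lands in $I_k$ after multiplication by an element of $S_k$, without justifying why an $S_k$-$n_k$-absorbing ideal absorbs from products longer than $n_k+1$; your extraction lemma (collapse two factors, apply the inductive hypothesis, and if the collapsed factor survives, regroup as $(tb_1,b_2,\dots)$ and apply the absorbing property once more) supplies exactly the missing justification, and it is correct. One point you share with the paper and should make explicit: the element $t_k$ produced by the extraction varies with the tuple, whereas the definition demands a single associated element; this is harmless because each $t_k$ is a power $s_k^{j}$ with $j$ bounded by the fixed recursion depth, so all witnesses can be padded up to a fixed power of $s_k$ (using that $I_k$ is an ideal) before taking the product.

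The genuine gap is in the final claim of (5). Your induction on the least $k$ with $a^k\in I$ does not close: for $k\ge n+2$ the grouping $(a^{k-n},a,\dots,a)$ yields either $sa^n\in I$ or $sa^{k-1}\in I$, and the latter is not of the form $a^{k'}\in I$ (indeed $a^{k-1}\notin I$ by minimality of $k$), so the inductive hypothesis cannot be invoked. If you instead iterate the same grouping on $sa^{k-1}$, treating $sa^{k-1-n}$ as the compound factor, the unfavorable branch produces $s^2a^{k-2}\in I$, and continuing you reach only $s^{k-n}a^n\in I$ --- a power of $s$ depending on $a$, not the single associated $s$ that the statement asserts. (For $n=1$ one can finish: $s^ja\in I$ applied to the pair $(s^j,a)$ forces $sa\in I$, since $s^{j+1}\in I$ would contradict $I\cap S=\emptyset$; but for $n\ge 2$ the analogous step leaves a branch such as $s^{j+1}a^{n-1}\in I$ that never terminates in a contradiction.) So as written your argument proves only that $ta^n\in I$ for \emph{some} $t\in S$ depending on $a$; reducing $t$ to the fixed associated $s$ requires an additional idea --- for instance showing that $I:s$ itself (not merely $I:s^{n+1}$ as in Proposition \ref{colon}) is $n$-absorbing --- and your sketch does not supply it. To be fair, the paper dismisses this assertion with ``the remaining assertion is obvious,'' so the gap is inherited from the source; but it is a gap nonetheless and you should either prove the claim with the exact associated $s$ or record the weaker statement you can actually reach.
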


\begin{proof}
(1) Suppose that $I$ is an $S$-$n$-absorbing ideal of $R$ which is
associated to for some $s \in S$.
As $J \cap S \neq \emptyset$,
we can pick an element $t \in J \cap S$.
Let $r_1,\dots,r_{n+1} \in R$ with $r_1\cdots r_{n+1} \in IJ$.
Then $r_1\cdots r_{n+1} \in I$,
so we may assume that $sr_1\cdots r_n \in I$.
Then $str_1\cdots r_n \in IJ$.
Thus $IJ$ is an $S$-$n$-absorbing ideal of $R$.

(2) Suppose that $J$ is an $S$-$n$-absorbing ideal of $T$.
Let $r_1, \dots, r_{n+1} \in R$ with $r_1\cdots r_{n+1} \in J \cap R$.
Then we may assume that $sr_1\cdots r_n \in J$ for some $s \in S$.
Thus $sr_1 \cdots r_n \in J\cap R$,
which means that $J \cap R$ is an $S$-$n$-absorbing ideal of $R$.

(3) For the sake of simplicity, we assume that $m=2$.
Let $S = S_1S_2$, and choose $r_1, \dots, r_{n+1} \in R$ with
$r_1 \cdots r_{n+1} \in I_1 \cap I_2$.
Since $I_1$ is an $S_1$-$n_1$-absorbing ideal of $R$ and
$I_2$ is an $S_2$-$n_2$-absorbing ideal of $R$,
there exist elements $s_1 \in S_1$, $s_2 \in S_2$ and subsets
$A_1 := \{r_{11},\dots, r_{1n_1}\}, A_2 := \{r_{21},\dots, r_{2n_2}\}
\subseteq \{r_1,\dots,r_{n+1}\}$ such that
$s_1 \prod_{1 \leq k \leq n_1}r_{1k} \in I_1$ and
$s_2 \prod_{1 \leq \ell \leq n_2}r_{2\ell} \in I_2$.
First we suppose that $A_1 \subseteq A_2$.
Then $s_1s_2 \prod_{1 \leq k \leq n_2}r_{2k} \in I_1 \cap I_2$,
so $I_1\cap I_2$ is an $S$-$n_2$-absorbing ideal of $R$.
Since $n_2 \leq n$, $I_1 \cap I_2$ is an $S$-$n$-absorbing ideal of $R$.
On the other hand, consider the case $A_1 \not\subseteq A_2$.
Then $s_1s_2 \big(\prod_{1 \leq k \leq n_1}r_{1k}\big) \big(\prod_{\alpha \in A_2 \setminus A_1} \alpha \big)  \in I_1 \cap I_2$.
Thus $I_1 \cap I_2$ is an $S$-$n$-absorbing ideal of $R$
since $n_1 + |A_2 \setminus A_1| \leq n$.

(4) This result follows directly from (3) since $S^n \subseteq S$.

(5) Suppose that $I$ is an $S$-$n$-absorbing ideal of $R$ and
associated to $s$.
Let $a_1,\dots,a_{n+1} \in R$ with $a_1 \cdots a_{n+1} \in \sqrt{I}$.
Then $a_1^m \cdots a_{n+1}^m \in I$ for some $m \in \mathbb{N}$.
Hence we may assume that $sa_1^m \cdots a_n^m \in I$,
so $(sa_1 \cdots a_n)^m \in I$.
Hence $sa_1 \cdots a_n \in \sqrt{I}$,
and thus $\sqrt{I}$ is an $S$-$n$-absorbing ideal of $R$.
For the remainder argument,
let $a \in \sqrt{I}$.
Then $a^m \in I$ for some $m \in \mathbb{N}$.
If $m \leq n$, then we are done.
Now, suppose that $m \geq n$.
Then there exists $\alpha \in \mathbb{N}$ such that
$s^{\alpha} a^n \in I$.
Since $s^{\alpha} \in S$,
the proof is done.
\end{proof}

Let $R$ be a commutative ring with identity and
let $I_1,\dots,I_m$ be ideals of $R$.
Recall that $I_1,\dots,I_m$ are {\it comaximal}
if $I_{\alpha} + I_{\beta} = R$ for any $1 \leq \alpha \neq \beta \leq m$.
Hence we have if $I_1,\dots,I_m$ are comaximal, then $I_1\cap \cdots \cap I_m = I_1\cdots I_m$.

\begin{corollary}
Let $R$ be a commutative ring with identity,
$S$ a multiplicative subset of $R$ and
$P_1,\dots,P_n$ be ideals of $R$ disjoint from $S$
which are pairwise comaximal.
If $P_1,\dots, P_n$ are $S$-prime ideals of $R$,
then $P_1\cdots P_n$ is an $S$-$n$-absorbing ideal of $R$.
\end{corollary}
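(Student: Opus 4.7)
The plan is to reduce the corollary to a direct application of Proposition \ref{basic property}(4) via the comaximality identity mentioned just above the statement. The first observation is that an $S$-prime ideal is exactly the $S$-$1$-absorbing case of the definition: the defining condition ``$ab \in I$ implies $sa \in I$ or $sb \in I$'' is literally the $n=1$ instance of ``$r_1 r_2 \in I$ implies $s r_1 \in I$ or $s r_2 \in I$.'' Hence each $P_i$ is an $S$-$n_i$-absorbing ideal of $R$ with $n_i = 1$.

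Next, since all the $P_i$ are disjoint from the common multiplicative set $S$, Proposition \ref{basic property}(4) applies with $m = n$ and $n_1 = \cdots = n_n = 1$, yielding that $P_1 \cap \cdots \cap P_n$ is an $S$-$n$-absorbing ideal of $R$ (where $n = n_1 + \cdots + n_n$).

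Finally, the remark preceding the corollary gives $P_1 \cap \cdots \cap P_n = P_1 \cdots P_n$ because the $P_i$ are pairwise comaximal. Substituting this identity into the conclusion of the previous step yields that $P_1 \cdots P_n$ is an $S$-$n$-absorbing ideal of $R$, as desired. There is no real obstacle here; the only minor thing to verify is the interpretation of $S$-prime as $S$-$1$-absorbing, after which the corollary follows by concatenating Proposition \ref{basic property}(4) with the comaximality identity.
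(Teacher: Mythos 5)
Your proposal is correct and matches the paper's own proof, which likewise invokes Proposition \ref{basic property}(4) together with the observation that an $S$-prime ideal is $S$-$1$-absorbing, and then uses the comaximality identity $P_1\cap\cdots\cap P_n=P_1\cdots P_n$. No issues.
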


\begin{proof}
This result follows directly from Proposition \ref{basic property}(4) and
the fact that an $S$-prime ideal is a $1$-absorbing ideal.
\end{proof}

Let $R$ and $T$ be commutative rings with identity,
let $S$ a multiplicative subset of $R$ and
$\varphi : R \to T$ a homomorphism.
Then it is easy to see that $\varphi(S)$ is a multiplicative subset of $\varphi(R)$.

\begin{proposition}\label{homo cor}
Let $R$ and $T$ be commutative rings with identity,
$S$ a multiplicative subset of $R$ and
$\varphi : R \to T$ a homomorphism.
\begin{enumerate}
\item[(1)]
Let $I$ be an ideal of $R$ disjoint from $S$ with $\ker(\varphi) \subseteq I$.
Then $I$ is an $S$-$n$-absorbing ideal of $R$ if and only if
$\varphi(I)$ is a $\varphi(S)$-$n$-absorbing ideal of ${\rm Im}(\varphi)$.
\item[(2)]
Let $J$ be an ideal of ${\rm Im}(\varphi)$ disjoint from $\varphi(S)$.
Then $J$ is a $\varphi(S)$-$n$-absorbing ideal of ${\rm Im}(\varphi)$ if and only if
$\varphi^{-1}(J)$ is an $S$-$n$-absorbing ideal of $R$.
\end{enumerate}
\end{proposition}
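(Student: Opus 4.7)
The plan is to handle (1) directly by transferring the absorbing condition across $\varphi$, exploiting $\ker(\varphi) \subseteq I$, and then obtain (2) as a consequence by applying (1) to $\varphi^{-1}(J)$.

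For (1), I would first verify that $\varphi(I)$ is disjoint from $\varphi(S)$: if $\varphi(s) \in \varphi(I)$ for some $s \in S$, then $s - a \in \ker(\varphi) \subseteq I$ for some $a \in I$, forcing $s \in I$ and contradicting $I \cap S = \emptyset$. For the forward direction, assume $I$ is $S$-$n$-absorbing associated to some $s \in S$, and aim to show $\varphi(I)$ is $\varphi(S)$-$n$-absorbing associated to $\varphi(s)$. Given $y_1, \ldots, y_{n+1} \in \mathrm{Im}(\varphi)$ with $y_1 \cdots y_{n+1} \in \varphi(I)$, pull back to $x_1, \ldots, x_{n+1} \in R$ with $\varphi(x_i) = y_i$, and observe that $\varphi(x_1 \cdots x_{n+1}) \in \varphi(I)$ together with $\ker(\varphi) \subseteq I$ forces $x_1 \cdots x_{n+1} \in I$. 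Applying the $S$-$n$-absorbing property and then $\varphi$ produces the desired conclusion. The converse is symmetric: take $x_1, \ldots, x_{n+1} \in R$ with $x_1 \cdots x_{n+1} \in I$, push to $\mathrm{Im}(\varphi)$, apply the $\varphi(S)$-$n$-absorbing property to obtain $\varphi(s \prod_{i \neq j} x_i) \in \varphi(I)$, and use $\ker(\varphi) \subseteq I$ once more to lift this back to $s \prod_{i \neq j} x_i \in I$.

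For (2), I would observe that $\varphi^{-1}(J)$ automatically contains $\ker(\varphi)$, and that $\varphi^{-1}(J) \cap S = \emptyset$, since any $s \in \varphi^{-1}(J) \cap S$ would yield $\varphi(s) \in J \cap \varphi(S)$. Because $J \subseteq \mathrm{Im}(\varphi)$, the equality $\varphi(\varphi^{-1}(J)) = J$ holds, so (2) follows immediately by applying (1) to the ideal $\varphi^{-1}(J)$ of $R$.

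I do not anticipate any substantive obstacle; the only subtle point is the repeated use of the hypothesis $\ker(\varphi) \subseteq I$ to upgrade a containment in $\varphi(I)$ to a containment in $I$, a step which would fail without this hypothesis and which is the sole reason $\varphi$ need not be surjective for the argument to work.
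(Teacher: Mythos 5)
Your proposal is correct and follows essentially the same route as the paper: verify disjointness of $\varphi(I)$ from $\varphi(S)$, use $\ker(\varphi)\subseteq I$ to lift containments in $\varphi(I)$ back to $I$, and transfer the absorbing condition in both directions. The only cosmetic difference is that you deduce both directions of (2) from (1) via $\varphi(\varphi^{-1}(J))=J$, whereas the paper proves one direction of (2) directly and invokes (1) for the other; both are valid.
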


\begin{proof}
(1) Suppose that $I$ is an $S$-$n$-absorbing ideal of $R$.
First, we claim that $\varphi(I) \cap \varphi(S) = \emptyset$.
Suppose to the contrary that
there exists an element $i \in I$ such that
$\varphi(i) \in \varphi(S)$.
Hence $i-s \in \ker(\varphi) \subseteq I$ for some $s \in S$,
so $s \in I$.
This is a contradiction since $I \cap S = \emptyset$.
Therefore $\varphi(I)$ is disjoint from $\varphi(S)$.
Let $\varphi(r_1),\dots,\varphi(r_{n+1}) \in {\rm Im}(\varphi)$ such that
$\varphi(r_1) \cdots \varphi(r_{n+1}) \in \varphi(I)$.
Then there exists an element $i \in I$ such that
$r_1 \cdots r_{n+1} - i \in \ker(\varphi) \subseteq I$.
Hence $r_1 \cdots r_{n+1} \in I$.
Since $I$ is an $S$-$n$-absorbing ideal of $R$,
we may assume that there exists an element $s \in S$ such that
$s r_1 \cdots r_n \in I$.
Hence $\varphi(s) \varphi(r_1) \cdots \varphi(r_n) \in \varphi(I)$,
which means that $\varphi(I)$ is a $\varphi(S)$-$n$-absorbing ideal of ${\rm Im}(\varphi)$.
For the converse, suppose that $\varphi(I)$ is a $\varphi(S)$-$n$-absorbing ideal of ${\rm Im}(\varphi)$.
It is clear that $I \cap S = \emptyset$.
Let $r_1,\dots,r_{n+1} \in R$ with $r_1 \cdots r_{n+1} \in I$.
Then $\varphi(r_1) \cdots \varphi(r_{n+1}) \in \varphi(I)$,
so we may assume that there exists an element $s \in S$ such that
$\varphi(sr_1 \cdots r_n) = \varphi(s)\varphi(r_1) \cdots \varphi(r_n) \in \varphi(I)$.
It follows that there exists an element $i \in I$ such that
$sr_1 \cdots r_n - i \in \ker(\varphi) \subseteq I$.
Hence $sr_1 \cdots r_n \in I$.
Thus $I$ is an $S$-$n$-absorbing ideal of $R$.

(2) Note that $J \cap \varphi(S) = \emptyset$ if and only if $\varphi^{-1}(J) \cap S = \emptyset$.
Suppose that $J$ is a $\varphi(S)$-$n$-absorbing ideal of ${\rm Im}(\varphi)$.
Let $r_1,\dots, r_{n+1} \in R$ with $r_1\cdots r_{n+1} \in \varphi^{-1}(J)$.
Then $\varphi(r_1) \cdots \varphi(r_{n+1}) = \varphi(r_1\cdots r_{n+1}) \in J$,
so we may assume that $\varphi(sr_1\cdots r_n) = \varphi(s)\varphi(r_1) \cdots \varphi(r_n) \in J$
for some $s \in S$.
Thus $sr_1\cdots r_n \in \varphi^{-1}(J)$.
Consequently, $\varphi^{-1}(J)$ is an $S$-$n$-absorbing ideal of $R$.
The reverse assertion follows directly from (1).
\end{proof}

By Proposition \ref{homo cor},
we obtain

\begin{corollary}
Let $R$ be a commutative ring with identity,
$S$ a multiplicative subset of $R$ and
$\varphi : R \to T$ a homomorphism with $\ker(\varphi) \cap S = \emptyset$.
Then there is a one-to-one order-preserving correspondence between
the $S$-$n$-absorbing ideals of $R$ containing $\ker(\varphi)$ and
the $\varphi(S)$-$n$-absorbing ideals of $\varphi(R)$.
\end{corollary}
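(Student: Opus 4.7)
The plan is to exhibit the correspondence explicitly by the maps $\Phi(I) := \varphi(I)$ and $\Psi(J) := \varphi^{-1}(J)$, and then to show that $\Phi$ and $\Psi$ are mutually inverse, well-defined, and order-preserving, leaning on Proposition \ref{homo cor} for the two well-definedness claims.

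First I would check well-definedness. If $I$ is an $S$-$n$-absorbing ideal of $R$ with $\ker(\varphi) \subseteq I$, then $I$ is disjoint from $S$ by hypothesis, and Proposition \ref{homo cor}(1) yields that $\varphi(I)$ is a $\varphi(S)$-$n$-absorbing ideal of $\mathrm{Im}(\varphi)$. Conversely, if $J$ is a $\varphi(S)$-$n$-absorbing ideal of $\mathrm{Im}(\varphi)$, then Proposition \ref{homo cor}(2) tells us that $\varphi^{-1}(J)$ is an $S$-$n$-absorbing ideal of $R$; and since $0 \in J$, we automatically have $\ker(\varphi) \subseteq \varphi^{-1}(J)$, so $\Psi(J)$ lies in the correct set.

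Next I would verify that $\Phi$ and $\Psi$ are mutually inverse using the standard ring-theoretic facts: for an ideal $I$ of $R$ with $\ker(\varphi) \subseteq I$ one has $\varphi^{-1}(\varphi(I)) = I$, and for an ideal $J$ of $\mathrm{Im}(\varphi)$ one has $\varphi(\varphi^{-1}(J)) = J$ because $\varphi$ surjects onto $\mathrm{Im}(\varphi)$. Hence $\Psi\circ\Phi = \mathrm{id}$ and $\Phi\circ\Psi = \mathrm{id}$ on the indicated sets. Order preservation is immediate since both $\varphi$ and $\varphi^{-1}$ preserve inclusions of ideals.

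I do not expect any serious obstacle: the substantive content is already packaged in Proposition \ref{homo cor}, and everything else is the classical correspondence theorem for ideals under a ring homomorphism restricted to the subcollection of $S$-$n$-absorbing ideals. The only small point to be careful about is making sure the image ideal indeed avoids $\varphi(S)$, but this is built into Proposition \ref{homo cor}(1) (it is proved there that $\varphi(I) \cap \varphi(S) = \emptyset$ whenever $\ker(\varphi) \subseteq I$ and $I \cap S = \emptyset$).
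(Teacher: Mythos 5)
Your argument is correct and follows exactly the route the paper intends: the paper states this corollary with no written proof beyond the phrase ``By Proposition \ref{homo cor}, we obtain,'' and your write-up simply fills in the implicit details (well-definedness of $I\mapsto\varphi(I)$ and $J\mapsto\varphi^{-1}(J)$ via parts (1) and (2) of that proposition, mutual inverseness from the classical correspondence theorem, and order preservation). Nothing is missing; the only point worth being careful about --- that $\varphi(I)$ avoids $\varphi(S)$ --- is, as you note, already established inside the proof of Proposition \ref{homo cor}(1).
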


Let $R$ be a commutative ring with identity,
$I$ an ideal of $R$ and
$S$ a multiplicative subset of $R$.
Then $S/I :=\{s+I \,|\, s \in S\}$ is a multiplicative subset of $R/I$.

\begin{corollary}\label{factor cor}
Let $R$ be a commutative ring with identity,
$S$ a multiplicative subset of $R$,
and $I, J$ ideal of $R$ such that $I \subseteq J$
and $J \cap S = \emptyset$.
Then $J$ is an $S$-$n$-absorbing ideal of $R$ if and only if
$J/I$ is an $S/I$-$n$-absorbing ideal of $R/I$.
\end{corollary}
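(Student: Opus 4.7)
The plan is to deduce this directly from Proposition \ref{homo cor}(1) by taking $\varphi$ to be the canonical projection. Specifically, I would introduce the ring homomorphism $\varphi : R \to R/I$ defined by $\varphi(r) = r + I$. This is surjective, so ${\rm Im}(\varphi) = R/I$, and its kernel is exactly $I$, which by hypothesis is contained in $J$. Moreover, under this map, $\varphi(S) = S/I$ by definition, and $\varphi(J) = J/I$ since $\varphi$ is surjective onto $R/I$ and $J \supseteq I = \ker(\varphi)$.

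Next I would check the disjointness hypothesis needed to invoke Proposition \ref{homo cor}(1). The assumption $J \cap S = \emptyset$ is already the hypothesis that $J$ is disjoint from $S$, which is the standing requirement for $J$ to be a candidate $S$-$n$-absorbing ideal. The proof of Proposition \ref{homo cor}(1) already shows that when $\ker(\varphi) \subseteq J$, the condition $J \cap S = \emptyset$ is equivalent to $\varphi(J) \cap \varphi(S) = \emptyset$, so $J/I$ is correspondingly disjoint from $S/I$ and is a legitimate candidate for an $S/I$-$n$-absorbing ideal of $R/I$.

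With these identifications in place, Proposition \ref{homo cor}(1) gives exactly the biconditional: $J$ is an $S$-$n$-absorbing ideal of $R$ if and only if $\varphi(J) = J/I$ is a $\varphi(S) = S/I$-$n$-absorbing ideal of ${\rm Im}(\varphi) = R/I$. Since this is precisely the conclusion of the corollary, the proof is complete.

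There is no real obstacle here; the only thing to be careful about is matching notation correctly, namely confirming that $\varphi(S) = S/I$ in the sense defined just before the corollary and that $\varphi(J) = J/I$ as subsets of $R/I$. Both identifications are immediate from the surjectivity of $\varphi$ and the inclusion $I \subseteq J$, so the corollary reduces to a one-line application of the previous proposition.
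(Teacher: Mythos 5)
Your proof is correct and is exactly the argument the paper intends: the corollary is stated without proof immediately after Proposition \ref{homo cor}, and the intended derivation is precisely the application of Proposition \ref{homo cor}(1) to the canonical projection $\varphi : R \to R/I$, with $\ker(\varphi) = I \subseteq J$, $\varphi(J) = J/I$, and $\varphi(S) = S/I$. Your checks of the identifications and of the disjointness condition are all accurate, so nothing is missing.
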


Let $R$ be a commutative ring with identity,
$S$ a multiplicative subset of $R$ and
$I$ an ideal of $R$.
Recall that $I:s = \{r \in R \,|\, sr \in I\}$ is an ideal of $R$ and
which is called by a {\it colon ideal}.
From now on, we investigate the behavior of the colon ideal $I:s$
when $I$ is an $S$-$n$-absorbing ideal.
The following result facilitates the study of $S$-$n$-absorbing ideals
by applying the properties of $n$-absorbing ideals
and is also a useful fact in this paper.

\begin{proposition}\label{colon}
Let $R$ be a commutative ring with identity,
$S$ a multiplicative subset of $R$ and
$I$ an ideal of $R$ disjoint from $S$.
Then $I$ is an $S$-$n$-absorbing ideal of $R$ if and only if
$I:s$ is an $n$-absorbing ideal of $R$ for some $s \in S$.
\end{proposition}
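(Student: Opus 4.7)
The plan is to prove both directions separately, with the harder one being the forward direction, which is where the previous proposition (that $I:s^n = I:s^k$ for all $k\ge n$ whenever $I$ is $S$-$n$-absorbing associated to $s$) does the crucial work.

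For the backward direction, I would argue as follows. Suppose $I:s$ is an $n$-absorbing ideal for some $s \in S$. Since ideals always satisfy $I \subseteq I:s$, any product $r_1\cdots r_{n+1} \in I$ lies automatically in $I:s$. The $n$-absorbing property of $I:s$ then supplies an index $j$ with $\prod_{i\neq j} r_i \in I:s$, which unpacks to $s\prod_{i\neq j} r_i \in I$. Hence $I$ is $S$-$n$-absorbing associated to this very $s$. This direction is essentially automatic once one notices the inclusion $I \subseteq I:s$.

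For the forward direction, suppose $I$ is $S$-$n$-absorbing associated to some $s_0 \in S$. I would claim that the witness element we want is $s := s_0^n \in S$, and show $J := I:s_0^n$ is $n$-absorbing. Pick $r_1,\dots,r_{n+1} \in R$ with $r_1\cdots r_{n+1} \in J$, so that $s_0^n r_1\cdots r_{n+1} \in I$. The key move is to regroup this product as
\[
(s_0 r_1)(s_0 r_2)\cdots (s_0 r_n)\, r_{n+1} \in I,
\]
which is a product of exactly $n+1$ factors lying in $I$. Applying the $S$-$n$-absorbing property associated to $s_0$ gives one index to omit, and I would split into two cases. If the omitted factor is $r_{n+1}$, one obtains $s_0 \cdot (s_0 r_1)\cdots(s_0 r_n) = s_0^{n+1} r_1\cdots r_n \in I$, i.e.\ $r_1\cdots r_n \in I:s_0^{n+1}$; invoking the previous proposition to identify $I:s_0^{n+1}$ with $I:s_0^n = J$ yields $r_1\cdots r_n \in J$. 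If instead the omitted factor is $s_0 r_j$ for some $1\le j\le n$, then $s_0\cdot \bigl(\prod_{i\neq j, i\le n}(s_0 r_i)\bigr)r_{n+1} = s_0^n\, r_1\cdots \hat r_j\cdots r_{n+1} \in I$, so $r_1\cdots \hat r_j \cdots r_{n+1} \in J$ directly. In either case one of the $n+1$ sub-products of length $n$ lies in $J$, establishing that $J$ is $n$-absorbing.

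The only real obstacle is the grouping trick in the forward direction: one has to recognize that the $n+1$ copies of $s_0$ accumulated in $s_0^n r_1\cdots r_{n+1}$ should be distributed as $(s_0 r_i)$'s rather than kept together, so that the $S$-$n$-absorbing hypothesis applies to an $(n+1)$-fold product. After that move, the stabilization $I:s_0^n = I:s_0^{n+1}$ from the preceding proposition is exactly what closes the case where the "free" factor $r_{n+1}$ is the one removed; without that stabilization, one would only land in $I:s_0^{n+1}$, which is a priori larger than $J$. Properness of $J$ (needed for $n$-absorbingness) follows from $I \cap S = \emptyset$, since $1 \in J$ would force $s_0^n \in I \cap S$.
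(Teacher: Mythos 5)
Your backward direction is correct and is exactly the paper's argument. The forward direction, however, has a genuine gap, located precisely at the step you yourself flagged as essential: the appeal to the stabilization $I:s_0^{n}=I:s_0^{n+1}$. The paper sidesteps this entirely by taking the witness to be $s_0^{n+1}$ rather than $s_0^{n}$: from $r_1\cdots r_{n+1}\in I:s_0^{n+1}$ one writes $(s_0r_1)\cdots(s_0r_{n+1})\in I$, so that \emph{every} factor, including $r_{n+1}$, carries its own copy of $s_0$, and then every omission lands uniformly back in $I:s_0^{n+1}$ with no case split and no stabilization. Your asymmetric grouping, which leaves $r_{n+1}$ bare, forces you in Case 1 to climb from $I:s_0^{n}$ up to $I:s_0^{n+1}$, and the stabilization you invoke to climb back down does not hold in general for $n\geq 2$ (even though it is asserted in the preceding proposition). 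Take $R=k[x,y]$, $S=\{y^{m}\mid m\in\mathbb{N}\}$, $I=(xy^{3})$ and $n=2$. A valuation count shows $I$ is $S$-$2$-absorbing associated to $y$: if $r_1r_2r_3\in I$ with $x\mid r_1$, and both $y\,r_1r_2\notin I$ and $y\,r_1r_3\notin I$, then $v_y(r_1)+v_y(r_2)+v_y(r_3)\leq 2$, contradicting $y^{3}\mid r_1r_2r_3$. Yet $I:y^{2}=(xy)\subsetneq (x)=I:y^{3}$.

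In that example your Case 1 draws a false conclusion. Take $r_1=x$, $r_2=1$, $r_3=y$, so $r_1r_2r_3=xy\in J:=I:y^{2}$. Your grouping is $(yx)(y)(y)=xy^{3}\in I$, and omitting the bare factor $r_3$ is one of the indices the $S$-$2$-absorbing hypothesis can legitimately hand you, since $y\cdot(yx)\cdot y=xy^{3}\in I$; Case 1 then yields $r_1r_2=x\in I:y^{3}$ and, via the claimed identification $I:y^{3}=I:y^{2}$, the false statement $x\in(xy)$. (The proposition itself is not endangered: here omitting the factor $yr_2$ also works and gives $r_1r_3=xy\in J$, but your argument does not produce that index, and $(xy)$ happens to be $2$-absorbing anyway.) The repair is simply to adopt the witness $s_0^{n+1}$ and distribute $s_0$ to all $n+1$ factors; your Case 2 computation then covers every index and the case distinction, together with the need for stabilization, disappears.
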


\begin{proof}
Suppose that $I$ is an $S$-$n$-absorbing ideal of $R$
which is associated to $t \in S$.
We claim that $I : t^{n+1}$ is an $n$-absorbing ideal of $R$.
Let $r_1,\dots, r_{n+1} \in R$ with $r_1\cdots r_{n+1} \in I:t^{n+1}$.
Then $(tr_1)(tr_2)\cdots (tr_{n+1}) \in I$,
which means that $t \prod_{\substack{1 \leq i \leq n+1 \\ i \neq j}}tr_i \in I$ for some $1 \leq j \leq n+1$.
This implies that $\prod_{\substack{1 \leq i \leq n+1 \\ i \neq j}}r_i \in  I : t^{n+1}$.
Thus $I:s$ is an $n$-absorbing ideal of $R$, where $s=t^{n+1}\in S$.
Conversely, suppose that $I:s$ is an $n$-absorbing ideal of $R$ for some $s\in S$ and
let $r_{1},\dots, r_{n+1}\in R$ such that $r_{1}\cdots r_{n+1}\in I$.
Since $r_{1}\cdots r_{n+1}\in I:s$,
$\prod_{\substack{1 \leq i \leq n+1 \\ i \neq j}}r_{i}\in I:s$ for some $1 \leq j \leq n+1$.
In other words, $s\prod_{\substack{1 \leq i \leq n+1 \\ i \neq j}}r_{i}\in I$ for some $1 \leq j \leq n+1$.
Thus $I$ is an $S$-$n$-absorbing ideal of $R$.
\end{proof}

Let $R$ be a commutative ring with identity and
let $I$ be an ideal of $R$.
Recall that a {\it minimal prime ideal} of $I$ is
a prime ideal of $R$ minimal among the ones containing $I$.

\begin{corollary}
Let $R$ be a commutative ring with identity,
$S$ a multiplicative subset of $R$ and
$I$ an ideal of $R$ disjoint from $S$.
If $I$ is an $S$-$n$-absorbing ideal of $R$,
then there are at most $n$ minimal prime ideals of $I$
disjoint from $S$.
\end{corollary}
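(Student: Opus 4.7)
The plan is to reduce the problem to the $n$-absorbing case via Proposition \ref{colon}, then invoke the classical fact (due to Anderson and Badawi \cite{Anderson}) that an $n$-absorbing ideal of a commutative ring has at most $n$ minimal primes. Specifically, since $I$ is $S$-$n$-absorbing, Proposition \ref{colon} furnishes an element $s \in S$ such that $I : s$ is an $n$-absorbing ideal of $R$; hence $I : s$ has at most $n$ minimal prime ideals.

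The heart of the argument is to build an injection from the set of minimal primes of $I$ disjoint from $S$ into the set of minimal primes of $I : s$. Let $P$ be a minimal prime of $I$ with $P \cap S = \emptyset$. Because $s \in S \setminus P$, first I would observe the chain $I \subseteq I : s \subseteq P$: the left inclusion is immediate since $a \in I$ gives $sa \in I$, and the right inclusion uses that $a \in I : s$ forces $sa \in I \subseteq P$, whence $a \in P$ by primeness and $s \notin P$.

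Next, I would show $P$ is in fact minimal over $I : s$. If $Q$ is a prime ideal with $I : s \subseteq Q \subseteq P$, then $I \subseteq Q \subseteq P$, so the minimality of $P$ over $I$ forces $Q = P$. Thus the assignment $P \mapsto P$ is a well-defined injection from the minimal primes of $I$ disjoint from $S$ into the minimal primes of $I : s$, and the bound $n$ follows.

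I do not expect any serious obstacle: the only external fact I would need to cite is the Anderson--Badawi bound on minimal primes of $n$-absorbing ideals, and the rest reduces to elementary prime-avoidance style manipulations with colon ideals. The one minor point to be careful about is the direction of the inclusion $I : s \subseteq P$, which does require both that $P$ is prime and that $s \notin P$, but both are guaranteed by the hypothesis $P \cap S = \emptyset$.
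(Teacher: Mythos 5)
Your proposal is correct and follows the same route as the paper: apply Proposition \ref{colon} to get an $n$-absorbing ideal $I:s$ and then invoke the Anderson--Badawi bound on its minimal primes. In fact your argument is slightly more complete, since you explicitly verify (via $I \subseteq I:s \subseteq P$ and the minimality transfer) that each minimal prime of $I$ disjoint from $S$ is a minimal prime of $I:s$, a counting step the paper's proof merely asserts.
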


\begin{proof}
Suppose that $I$ is an $S$-$n$-absorbing ideal of $R$.
Then $I:s$ is an $n$-absorbing ideal of $R$ for some $s \in S$ by Proposition \ref{colon}.
Hence there are at most $n$ minimal prime ideals of $I:s$ \cite[Theorem 2.5]{Anderson}.
As a minimal prime of $I$ disjoint from $S$ is a minimal prime of $I : s$,
the number of the minimal prime ideals of $I:s$ is greater than
the number of the minimal prime ideals of $I$ disjoint from $S$.
Hence the proof is done.
\end{proof}

Let $R$ be a commutative ring with identity,
$S$ a multiplicative subset of $R$ and
$I$ an ideal of $R$.
In a manner similar to the minimal prime ideal, we can define a minimal $S$-$n$-absorbing ideal of $I$ as follows:
$I$ is a {\it minimal $S$-$n$-absorbing ideal over $J$} if
$I$ is a minimal element of the set of the $S$-$n$-absorbing ideals containing $J$.

In \cite{Hamed}, the authors defined the following concept:
a multiplicative subset $S$ of $R$ is a {\it strongly multiplicative subset} if
for each subset $\{s_{\alpha} \,|\, \alpha \in \Lambda \}$ of $S$,
$(\bigcap_{\alpha \in \Lambda} s_{\alpha}R) \cap S \neq \emptyset$.
Note that every finite multiplicative subset of $R$ is
a strongly multiplicative subset of $R$,
and every multiplicative subset which is not anti-Archimedean is not a strongly multiplicative subset of $R$
\cite[Example 5]{Hamed}.

From this point onward,
we explore the existence of a minimal $S$-$n$-absorbing ideal.
For this purpose, we need the following lemma.

\begin{lemma}\label{intersection}
Let $R$ be a commutative ring with identity,
$S$ a strongly multiplicative subset of $R$ and
$\{I_{\alpha} \,|\, \alpha \in \Lambda\}$ a chain of $S$-$n$-absorbing ideals of $R$.
Then $\bigcap_{\alpha \in \Lambda} I_{\alpha}$ is an $S$-$n$-absorbing ideal of $R$.
\end{lemma}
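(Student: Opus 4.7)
My plan is to produce a single element $s\in S$ that will witness the $S$-$n$-absorbing property of the intersection $I:=\bigcap_{\alpha\in\Lambda}I_\alpha$. First I would verify that $I\cap S=\emptyset$, which is immediate from $I\subseteq I_\alpha$ and $I_\alpha\cap S=\emptyset$ for every $\alpha$. The hoped-for $s$ will be chosen once and for all from $S$, independently of the eventual factors $r_1,\ldots,r_{n+1}$.

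For each $\alpha$, fix an element $s_\alpha\in S$ to which $I_\alpha$ is associated. The strong multiplicativity of $S$ delivers an element $s\in S\cap \bigcap_{\alpha\in\Lambda}s_\alpha R$, so that $s=s_\alpha t_\alpha$ for some $t_\alpha\in R$ and every $\alpha$. Now, given $r_1,\ldots,r_{n+1}\in R$ with $r_1\cdots r_{n+1}\in I$, this product belongs to every $I_\alpha$, so the $S$-$n$-absorbing property of $I_\alpha$ yields an index $j(\alpha)\in\{1,\ldots,n+1\}$ with $s_\alpha\prod_{i\neq j(\alpha)}r_i\in I_\alpha$; multiplying by $t_\alpha$ upgrades this to $s\prod_{i\neq j(\alpha)}r_i\in I_\alpha$.

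The main obstacle is a bookkeeping issue: the index $j(\alpha)$ may depend on $\alpha$, whereas the definition demands a single $j$ that works for every $\alpha$ simultaneously (so that the resulting element lies in the intersection). My approach is a pigeonhole argument that invokes the chain hypothesis. Define $\Lambda_j:=\{\alpha\in\Lambda \,|\, s\prod_{i\neq j}r_i\in I_\alpha\}$ for $j=1,\ldots,n+1$; because the $I_\alpha$ are nested, each $\Lambda_j$ is upward-closed with respect to the chain order, and the previous paragraph yields $\Lambda=\bigcup_{j=1}^{n+1}\Lambda_j$. If no $\Lambda_j$ coincided with $\Lambda$, I would pick $\alpha_j\notin\Lambda_j$ for each $j$ and let $\beta$ be chosen so that $I_\beta$ is the smallest among $I_{\alpha_1},\ldots,I_{\alpha_{n+1}}$; upward-closedness of each $\Lambda_j$ (combined with $I_\beta\subseteq I_{\alpha_j}$) would force $\beta\notin\Lambda_j$ for every $j$, contradicting $\beta\in\Lambda=\bigcup_j\Lambda_j$. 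Hence some $\Lambda_j$ equals $\Lambda$, and for that $j$ one has $s\prod_{i\neq j}r_i\in I_\alpha$ for every $\alpha$, i.e., $s\prod_{i\neq j}r_i\in I$, which closes the proof.
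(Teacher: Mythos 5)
Your proof is correct and follows essentially the same route as the paper's: both use strong multiplicativity to fix a single witness $s\in S\cap\bigcap_{\alpha}s_{\alpha}R$, and both resolve the dependence of $j$ on $\alpha$ by selecting one failing index $\alpha_j$ per $j$, passing to the smallest of the $n+1$ ideals $I_{\alpha_1},\dots,I_{\alpha_{n+1}}$ in the chain, and deriving a contradiction with the $S$-$n$-absorbing property of that smallest ideal. Your $\Lambda_j$/upward-closedness packaging is only a cosmetic reformulation of the paper's argument.
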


\begin{proof}
Suppose that for each $\alpha \in \Lambda$,
$I_{\alpha}$ is an $S$-$n$-absorbing ideal of $R$
which is associated to $s_{\alpha} \in S$.
Consider the subset $\{s_{\alpha} \,|\, \alpha \in \Lambda\}$ of $S$.
Since $S$ is a strongly multiplicative subset of $R$,
we can pick an element $t \in (\bigcap_{\alpha \in \Lambda} s_{\alpha}R) \cap S$.
Now, suppose that $r_1,\dots, r_{n+1} \in R$ with $r_1 \cdots r_{n+1} \in \bigcap_{\alpha \in \Lambda} I_{\alpha}$.
Suppose to the contrary that
$t\prod_{\substack{1 \leq i \leq n+1 \\ i \neq j}}r_i \notin
\bigcap_{\alpha \in \Lambda} I_{\alpha}$ for all $1\leq j \leq n+1$.
Then there exist elements $\alpha_{1},\dots, \alpha_{n+1}\in\Lambda$
such that $t\prod_{\substack{1 \leq i \leq n+1 \\ i \neq j}}r_i \notin I_{\alpha_j}$.
Since $\{I_{\alpha} \,|\, \alpha \in \Lambda\}$ is a chain,
there exists an integer $1 \leq m \leq n+1$ such that
$ I_{\alpha_m} = \bigcap_{i=1}^{n+1} I_{\alpha_i}$,
so $t\prod_{\substack{1 \leq i \leq n+1 \\ i \neq j}}r_i \notin I_{\alpha_m}$ for all $1 \leq j \leq n+1$;
that is, $s_{\alpha_m}\prod_{\substack{1 \leq i \leq n+1 \\ i \neq j}}r_i \notin I_{\alpha_m}$ for all $1 \leq j \leq n+1$.
This contradicts the fact that $I_{\alpha_m}$ is an $S$-$n$-absorbing ideal and
$I_{\alpha_m}$ is associated to $s_{\alpha_m}$.
Thus there exists a positive integer $1 \leq j \leq n+1$ such that
$t\prod_{\substack{1 \leq i \leq n+1 \\ i \neq j}}r_i \in
\bigcap_{\alpha \in \Lambda} I_{\alpha}$.
Consequently, $\bigcap_{\alpha \in \Lambda} I_{\alpha}$ is an $S$-$n$-absorbing ideal of $R$.
\end{proof}

We conclude this section with the following result.

\begin{proposition}
Let $R$ be a commutative ring with identity and
let $S$ be a strongly multiplicative subset of $R$.
Then each ideal $I$ of $R$ disjoint from $S$ is
contained in minimal $S$-$n$-absorbing ideal over $I$.
\end{proposition}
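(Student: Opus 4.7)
The plan is to apply Zorn's lemma to the collection $\mathcal{F}$ of all $S$-$n$-absorbing ideals of $R$ that contain $I$, partially ordered by reverse inclusion; a maximal element of $(\mathcal{F}, \supseteq)$ is precisely a minimal $S$-$n$-absorbing ideal over $I$ in the sense of the definition given just before Lemma \ref{intersection}.

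First I would show that $\mathcal{F}$ is nonempty. Since $I \cap S = \emptyset$, a standard Zorn's lemma argument on the poset of ideals of $R$ containing $I$ and disjoint from $S$ produces an ideal $P$ that is maximal with respect to containing $I$ and being disjoint from $S$. It is classical (Krull's argument: if $ab \in P$ with $a,b \notin P$, then $P+aR$ and $P+bR$ each meet $S$, and multiplying the corresponding elements produces an element of $P \cap S$, a contradiction) that such a $P$ is prime. Because $1 \in S$, any prime ideal disjoint from $S$ is trivially $S$-prime (with witness $s = 1$), and by Example~(2) at the start of this section every $S$-prime ideal is $S$-$n$-absorbing. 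Hence $P \in \mathcal{F}$.

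Next I would verify the hypothesis of Zorn's lemma applied to $(\mathcal{F}, \supseteq)$. Given a chain $\{J_{\alpha}\}_{\alpha \in \Lambda} \subseteq \mathcal{F}$, totally ordered by inclusion, Lemma \ref{intersection}---whose use is exactly where the hypothesis that $S$ is a strongly multiplicative subset enters---guarantees that $J := \bigcap_{\alpha \in \Lambda} J_{\alpha}$ is again an $S$-$n$-absorbing ideal of $R$, and in particular $J \cap S = \emptyset$. Since $I \subseteq J_{\alpha}$ for every $\alpha \in \Lambda$, we also have $I \subseteq J$, so $J \in \mathcal{F}$, and $J$ is an upper bound of the chain in $(\mathcal{F}, \supseteq)$. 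Zorn's lemma then furnishes a maximal element of $(\mathcal{F}, \supseteq)$, which is, by definition, a minimal $S$-$n$-absorbing ideal over $I$.

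The only genuine obstacle is the non-emptiness step; once a single $S$-$n$-absorbing ideal containing $I$ is produced, the remainder is a routine Zorn's lemma application, driven entirely by Lemma \ref{intersection}. The key observation that resolves the obstacle is that prime ideals disjoint from $S$ are automatically $S$-prime, and hence $S$-$n$-absorbing, so the classical Krull construction supplies the required seed ideal.
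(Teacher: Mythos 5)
Your proof is correct and follows essentially the same route as the paper: establish non-emptiness of the family of $S$-$n$-absorbing ideals containing $I$ by producing a prime ideal containing $I$ and disjoint from $S$ (which is automatically $S$-prime, hence $S$-$n$-absorbing), and then apply Zorn's lemma to $(\mathfrak{A},\supseteq)$, with Lemma \ref{intersection} supplying the upper bound for chains. Your write-up simply makes explicit the details the paper leaves implicit.
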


\begin{proof}
Let $I$ be an ideal of $R$ disjoint from $S$ and
let $\mathfrak{A}$ be the set of the $S$-$n$-absorbing ideals of $R$ containing $I$.
Note that there is a prime ideal containing $I$ disjoint from $S$, say $P$.
This implies that $\mathfrak{A} \neq \emptyset$.
Also, by Lemma \ref{intersection}, $(\mathfrak{A},\supseteq)$ has a maximal element.
Thus the proof is done.
\end{proof}

\section{$S$-$n$-absorbing ideals of quotient rings}\label{sec 3}

Motivated by the following theorem, we examine the quotient extension of $S$-$n$-absorbing ideals in this section.

\begin{theorem*}
{\rm (\cite[Remark 1]{Hamed})}
Let $R$ be a commutative ring with identity,
$S$ a multiplicative subset of $R$ consisting of nonzerodivisors and
$P$ an ideal of $R$ disjoint with $S$.
Then the following assertions are equivalent:
\begin{enumerate}
\item[(1)]
$P$ is an $S$-prime ideal of $R$.
\item[(2)]
$P:s$ is a prime ideal of $R$ for some $s \in S$.
\item[(3)]
$PR_S$ is a prime ideal of $R_S$ and
$PR_S \cap R = P:s$ for some $s \in S$.
\end{enumerate}
\end{theorem*}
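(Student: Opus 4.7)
The plan is to derive the three equivalences from the colon-ideal description of $S$-$n$-absorbing ideals (Proposition \ref{colon}), together with routine localization bookkeeping. I would organize the argument as (1)$\Leftrightarrow$(2) via the specialization of Proposition \ref{colon} to $n=1$, and (2)$\Leftrightarrow$(3) via a direct comparison of $P:s$ in $R$ with $PR_S$ in $R_S$.

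For (1)$\Leftrightarrow$(2), note that the $1$-absorbing condition $r_1r_2\in I\Rightarrow r_1\in I$ or $r_2\in I$ is exactly primality. Thus Proposition \ref{colon} with $n=1$ yields immediately that $P$ is $S$-prime if and only if $P:s$ is a prime ideal of $R$ for some $s\in S$.

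For (2)$\Rightarrow$(3), I would first record that $(P:s)\cap S=\emptyset$, because $t\in(P:s)\cap S$ would produce $st\in P\cap S$, contradicting the hypothesis. I then claim $PR_S\cap R=P:s$: the inclusion $P:s\subseteq PR_S\cap R$ is immediate from $sr\in P$, while for the reverse, any $r\in PR_S\cap R$ can be written as $\frac{r}{1}=\frac{p}{t}$ for some $p\in P$, $t\in S$, and the nonzerodivisor hypothesis on $S$ lifts this equality in $R_S$ to $tr=p\in P$ in $R$. Since $P:s$ is prime and $t\notin P:s$, we conclude $r\in P:s$. With $PR_S\cap R=P:s$ established, primality of $PR_S$ follows by contracting to $R$: any $\frac{a}{u}\cdot\frac{b}{v}\in PR_S$ yields $ab\in PR_S\cap R=P:s$, and primality of $P:s$ finishes the argument; properness of $PR_S$ follows from $P\cap S=\emptyset$.

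For (3)$\Rightarrow$(2), if $ab\in P:s$ then $sab\in P$, so $\frac{a}{1}\cdot\frac{b}{1}=\frac{sab}{s}\in PR_S$. Primality of $PR_S$ together with $PR_S\cap R=P:s$ then forces $a\in P:s$ or $b\in P:s$, while properness of $P:s$ comes from $s\notin P$. The main subtlety, and the step I would take most care with, is keeping track of the element $s$ throughout: the element witnessing $S$-primality in (1), the $s$ for which $P:s$ is prime in (2), and the $s$ for which $PR_S\cap R=P:s$ in (3) are \emph{a priori} distinct, and one must verify that they may be chosen consistently (typically by replacing $s$ with a suitable power, as in the proof of Proposition \ref{colon}). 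The nonzerodivisor assumption on $S$ is precisely what allows equalities in $R_S$ to be cleared back into $R$ without an extra multiplier, which is the crucial technical point supporting the identification $PR_S\cap R=P:s$ in (2)$\Rightarrow$(3).
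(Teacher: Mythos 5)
Your argument is correct, but note that the paper does not actually prove this statement: it is quoted verbatim from \cite[Remark 1]{Hamed} as motivation for Section \ref{sec 3}, so there is no in-paper proof to compare against. That said, your route for (1)$\Leftrightarrow$(2) --- specializing Proposition \ref{colon} to $n=1$ --- is precisely the reading the authors themselves endorse when they remark that ``the generalization of the equivalence of (1) and (2) \dots was proved in Proposition \ref{colon}.'' Your handling of (2)$\Leftrightarrow$(3) is also sound: the properness checks ($s\notin P$ gives $1\notin P:s$, and $(P:s)\cap S=\emptyset$ gives $t\notin P:s$) are all in place, and you correctly track which $s$ witnesses which assertion. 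One small observation: the nonzerodivisor hypothesis is not actually essential at the point where you use it. From $\frac{r}{1}=\frac{p}{t}$ one gets $u(tr-p)=0$ for some $u\in S$, hence $(ut)r\in P\subseteq P:s$ with $ut\in S$ disjoint from the prime $P:s$, and primality already forces $r\in P:s$ without clearing the multiplier $u$. So your proof would survive dropping that hypothesis; it is simply inherited from the statement in \cite{Hamed}.
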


Consider the following statement.
\begin{center}
(3$'$) $PR_S$ is a prime ideal of $R_S$.
\end{center}
We investigate whether the theorem above can be generalized to $S$-$n$-absorbing ideals.
Moreover, we check whether the condition $(3)$ can be replaced to (3$'$).

The generalization of the equivalence of (1) and (2) of the previous theorem to $S$-$n$-absorbing ideals
was proved in Proposition \ref{colon}.

Now, we investigate whether the assertions (1) and (3$'$)
are equivalent for $S$-$n$-absorbing ideals.

\begin{proposition}\label{quotient extension}
Let $R$ be a commutative ring with identity,
$S$ a multiplicative subset of $R$ and
$I$ an ideal of $R$ disjoint from $S$.
If $I$ is an $S$-$n$-absorbing ideal of $R$,
then $IR_S$ is an $n$-absorbing ideal of $R_S$.
\end{proposition}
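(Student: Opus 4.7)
The plan is to reduce the assertion to the standard principle that localization preserves $n$-absorbing ideals, using Proposition \ref{colon} as the bridge. Since $I$ is $S$-$n$-absorbing, that proposition supplies some $s \in S$ for which $I:s$ is an $n$-absorbing ideal of $R$. First I would verify that $IR_S = (I:s)R_S$: the inclusion $\subseteq$ is clear from $I \subseteq I:s$, and for $\supseteq$, any $a \in I:s$ satisfies $sa \in I$, whence $\tfrac{a}{1} = \tfrac{sa}{s} \in IR_S$ because $s$ is a unit in $R_S$. I would also record that $I:s$ is automatically disjoint from $S$, since an element of $(I:s) \cap S$ would produce an element of $I \cap S$, contradicting the hypothesis on $I$.

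It therefore suffices to show that if $J$ is an $n$-absorbing ideal of $R$ disjoint from $S$, then $JR_S$ is $n$-absorbing in $R_S$. Given $\tfrac{a_1}{s_1}, \dots, \tfrac{a_{n+1}}{s_{n+1}} \in R_S$ with $\prod_i \tfrac{a_i}{s_i} \in JR_S$, I would unpack the defining equivalence relation in the localization to obtain some $v \in S$ with $v a_1 \cdots a_{n+1} \in J$. Regarding this as the product of the $n+1$ elements $va_1, a_2, \dots, a_{n+1}$ of $R$, the $n$-absorbing property of $J$ yields an index $j$ such that the product of the remaining $n$ factors lies in $J$. If $j = 1$, the product $a_2 \cdots a_{n+1}$ already lies in $J$, and $\prod_{i \neq 1} \tfrac{a_i}{s_i} \in JR_S$ immediately. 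If $j > 1$, the element $(va_1)\prod_{i \neq 1, j} a_i$ lies in $J$, and after observing that $v$ is a unit in $R_S$ one has $\prod_{i \neq j} \tfrac{a_i}{s_i} \in JR_S$ as well.

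The only minor subtlety is that clearing denominators inserts the auxiliary factor $v \in S$, forcing the $n$-absorbing property to be invoked on $va_1$ rather than on $a_1$ itself; but since $v$ becomes a unit after localization, this costs nothing when transporting the conclusion back to $R_S$. I do not anticipate any serious obstacle here — once Proposition \ref{colon} is in hand, the entire argument is essentially a bookkeeping exercise in denominator-clearing within $R_S$.
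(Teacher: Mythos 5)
Your proof is correct, but it takes a genuinely different route from the paper's. The paper argues directly from the definition: it clears denominators to get $sr_1\cdots r_{n+1}\in I$, treats this as a product of the $n+2$ elements $r_1,\dots,r_{n+1},s$, and then invokes the $S$-$n$-absorbing property to drop two factors at once — a step the authors label ``routine'' but which tacitly uses the fact that an $S$-$n$-absorbing ideal absorbs products of more than $n+1$ factors down to subproducts of length $n$ (obtained by iterating the defining property). You instead factor the argument through Proposition \ref{colon}: pass to the colon ideal $I:s$, which is $n$-absorbing, check that $IR_S=(I:s)R_S$ and that $I:s$ is disjoint from $S$ (so the localization is proper), and then prove the standard fact that localization preserves $n$-absorbing ideals — which is \cite[Theorem 4.2(b)]{Anderson} and could simply be cited, in the same way the paper cites part (a) of that theorem for the contraction direction. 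Your reduction buys a cleaner bookkeeping step, since you only ever apply the plain $n$-absorbing property to an honest $(n+1)$-tuple $(va_1,a_2,\dots,a_{n+1})$ and never need the extension to longer products; the cost is the (mild) dependence on Proposition \ref{colon} and the auxiliary identification $IR_S=(I:s)R_S$, both of which you verify correctly. The case split on whether the omitted factor is $va_1$ or some $a_j$ with $j>1$, and the observation that $v$ is a unit in $R_S$, are exactly the right points to address, so there is no gap.
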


\begin{proof}
Suppose that $I$ is an $S$-$n$-absorbing ideal of $R$.
Let $a_1,\dots, a_{n+1} \in R_S$ such that
$a_1\cdots a_{n+1} \in IR_S$.
Since for each $1 \leq i \leq n+1$,
$a_i = \frac{r_i}{s_i}$ for some $r_i \in R$ and $s_i \in S$,
we obtain that $\frac{r_1\cdots r_{n+1}}{s_1\cdots s_{n+1}} \in IR_S$.
Hence there exist elements $s \in S$ such that
$sr_1 \cdots r_{n+1} \in I$.
Set $s = r_{n+2}$.
As $I$ is an $S$-$n$-absorbing ideal of $R$,
there exist $u\in S$ and distinct integers $1 \leq i,j \leq n+2$ such that
$u \prod_{\substack{1 \leq \alpha \leq n+2\\ \alpha \neq i,j}}r_{\alpha} \in I$.
It is routine to check that
$\prod_{\substack{1 \leq \alpha \leq n+1 \\ \alpha \neq \beta}} \frac{r_{\alpha}}{s_{\alpha}} \in IR_S$
for some $1 \leq \beta \leq n+1$.
Thus $IR_S$ is an $n$-absorbing ideal of $R_S$.
\end{proof}

The next example shows that
for each $n \geq 1$, the converse of Proposition \ref{quotient extension} does not necessarily hold.

\begin{example}\label{counter example of converse quotient}
{\rm
Let $R=\mathbb{Z}+X\mathbb{Q}[X]$ and
let $n$ be a fixed positive integer.
Choose a prime number $q$,
and let $I=X^n((X-q)\mathbb{Q}[X]\cap R)$,
$M=X\mathbb{Q}[X]$ and $S=R\setminus M$.
Then $I$ is an ideal of $R$, $M$ is a prime ideal of $R$, and $I\subseteq M$.
Hence $S$ is a multiplicative subset of $R$ disjoint from $I$.
We then claim that $I$ is not an $S$-$n$-absorbing ideal of $R$.
Indeed, suppose that $I$ is an $S$-$n$-absorbing ideal.
As $X^n(X-q)\in I$,
there exists $s\in S$ such that
$sX^n\in I$ or $sX^{n-1}(X-q)\in I$.
Assume that $sX^{n-1}(X-q)\in I$.
Then $sX^{n-1}(X-q)=X^n(X-q)u$
for some $u\in \mathbb{Q}[X]$ such that $qu(0)\in \mathbb{Z}$,
and hence $s=Xu \in M$, a contradiction.
Therefore $sX^n\in I$, so $sX^n=X^n(X-q)h$ for some $h\in \mathbb{Q}[X]$
such that $qh(0)\in\mathbb{Z}\setminus \{0\}$.
Now set $r=qh(0)$ and choose a prime number $p$ that does not divide $r$,
and let $a_1=\cdots=a_n=\frac{1}{p}X$ and $a_{n+1}=p^n(X-q)$.
Then $a_1,\dots,a_{n+1}$ are elements of $R$ such that $a_1\cdots a_{n+1}\in I$.
Moreover, if $s \frac{1}{p^n}X^n = sa_1\cdots a_n \in I$,
then $s\frac{1}{p^n}X^n=X^n(X-q)l$ for some $l\in\mathbb{Q}[X]$ such that $ql(0)\in \mathbb{Z}$.
This implies that $(X-q)h= s=p^n(X-q)l$, and hence $p^nl=h$.
Therefore we have $\frac{r}{p^n}=\frac{qh(0)}{p^n}=ql(0)\in\mathbb{Z}$,
which contradicts our choice of $p$.
Hence $sa_1\cdots a_n\not\in I$.
On the other hand,
for any $1 \leq i \leq n$,
$pX^{n-1}(X-q)^2 h = s \prod_{\substack{1\leq j \leq n+1 \\ j \neq i}} a_j \not\in I$
since $pX^{n-1}(X-q)^2 h$ cannot be divided by $X^n$ in $R$.
Hence $I$ is not an $S$-$n$-absorbing ideal of $R$.
Finally, we have $R_{S}= \mathbb{Q}[X]_{X\mathbb{Q}[X]}$,
and thus $IR_{S}=X^n\mathbb{Q}[X]_{X\mathbb{Q}[X]}$ is an $n$-absorbing ideal of $R_{S}$
\cite[Example 5.6(a)]{Anderson}.
}
\end{example}

Let $R$ be a commutative ring with identity and
let $I$ be an ideal of $R$.
Recall that $I$ is said to be a {\it primary ideal} if
for each $a,b\in R$ with $ab\in I$, either $a\in I$ or $b\in\sqrt{I}$.
A primary ideal that contains a power of its radical
is said to be a {strongly primary ideal}.
Also, recall that $R$ is {\it Laskerian}
(respectively, {\it strongly Laskerian})
if for each proper ideal $I$ of $R$,
$I$ can be written as an intersection of finitely many primary ideals
(respectively, strongly primary ideals) of $R$.

S. Visweswaran \cite{v22} introduced the notion of
(strongly) $S$-primary ideals in order to generalize
(strongly) Laskerian rings.
Namely, given a multiplicative subset $S$ of $R$,
an ideal $I$ of $R$ is an {\it $S$-primary ideal} if
$I\cap S=\emptyset$ and there exists $s\in S$ such that
whenever $ab\in I$ for some $a,b\in R$, either $sa\in I$ or $sb\in\sqrt{I}$.
An $S$-primary ideal is a {\it strongly $S$-primary ideal} if
$t(\sqrt{I})^n\subseteq I$ for some $n\in\mathbb{N}$
and $t\in S$.
We say $R$ is {\it $S$-Laskerian}
(respectively, {\it strongly $S$-Laskerian})
if for each proper ideal $I$ of $R$ disjoint from $S$,
there exists an element $s \in S$ such that
$I:s$ can be written
as an intersection of finitely many $S$-primary ideals
(respectively, strongly $S$-primary ideals) of $R$.
Clearly, every primary ideal (respectively, strongly primary ideal) of $R$ disjoint from $S$
is $S$-primary (respectively, strongly $S$-primary),
while every Laskerian (respectively, strongly Laskerian) ring is $S$-Lakserian
(respectively, strongly $S$-Laskerian) for each multiplicative subset $S$ of $R$.

Let $R=\mathbb{Z}+X\mathbb{Q}[X]$.
Then $R$ is a  Pr{\"u}fer domain with Krull dimension $2$ \cite[Corollary 1.1.9]{fhp},
so $R$ is not a Laskerian ring \cite[Exercise 37.9]{G}.
Hence the converse of Proposition \ref{quotient extension} does not necessarily hold
when $R$ is not a Laskerian ring by Example \ref{counter example of converse quotient}.
Therefore we naturally ask whether 
the converse of proposition \ref{quotient extension} holds when $R$ is a Laskerian ring.
In fact, we show that this question has an affirmative answer for $S$-Laskerian rings.

Recall that if $I$ is a proper ideal of $R$ disjoint from $S$,
then the ideal $\{x\in R\mid xs\in I \textnormal{ for some }s\in S\}$ of $R$
is called the {\it $S$-saturation} of $I$ (or the {\it contraction} of $IR_{S}$ to $R$),
denoted by ${\rm Sat}_S(I)$.
This is the preimage of the ideal $IR_{S}$ to $R$
via the canonical homomorphism $\varphi_s : R\to R_{S}$
given by $\varphi_s(r)=\frac{rs}{s}$
for each $r\in R$ and fixed element $s \in S$.
As in the motivating theorem,
sometimes the notation $IR_{S}\cap R$ is used to indicate ${\rm Sat}_S(I)$.
It is easy to see that $I\subseteq {\rm Sat}_S(I)$.
The following result is implicitly suggested in \cite{v22},
and we record it for convenience.

\begin{lemma}\label{v31}
Let $R$ be a commutative ring with identity,
$S$ a multiplicative subset of $R$ and
$I$ an ideal of $R$ disjoint from $S$.
Then the following are equivalent.
\begin{enumerate}
\item[(1)] $I:s$ is a finite intersection of $S$-primary ideals of $R$ for some $s\in S$.
\item[(2)] $I:s'$ is a finite intersection of primary ideals of $R$ for some $s'\in S$.
\item[(3)] $IR_S$ is a finite intersection of primary ideals of $R_S$,
and ${\rm Sat}_S(I)=I:t$ for some $t\in S$. 
\end{enumerate}
\end{lemma}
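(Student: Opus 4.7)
The plan is to establish the cycle $(2) \Rightarrow (1) \Rightarrow (3) \Rightarrow (2)$, leaning on two standard sub-lemmas about localization of $S$-primary ideals that I would verify first. \emph{Sub-lemma A:} if $Q$ is an $S$-primary ideal of $R$ with witness $u$, then $QR_S$ is a primary ideal of $R_S$ and $QR_S \cap R = Q : u$; both assertions follow directly from applying the $S$-primary condition and using the fact that $\sqrt{Q} \cap S = \emptyset$ to exclude the radical alternative whenever it involves an element of $S$. \emph{Sub-lemma B:} every primary ideal $P$ of $R_S$ contracts to a primary ideal $P \cap R$ of $R$ disjoint from $S$, with $P = (P \cap R) R_S$ in the usual way.

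The direction $(2) \Rightarrow (1)$ is the most direct. Given a primary decomposition $I : s' = Q_1 \cap \cdots \cap Q_m$, I would absorb any $Q_i$ that meets $S$ into the denominator by picking $s_i \in Q_i \cap S$ and replacing $s'$ by $s' \cdot \prod_{Q_i \cap S \neq \emptyset} s_i$. A short verification (using $\sqrt{Q_i} \cap S = \emptyset$ for the components disjoint from $S$) gives that the new colon ideal equals $\bigcap_{Q_i \cap S = \emptyset} Q_i$, whose components are primary ideals disjoint from $S$ and therefore $S$-primary. The direction $(3) \Rightarrow (2)$ is dual and equally routine: from a primary decomposition $IR_S = \bigcap P_j$ in $R_S$, Sub-lemma B gives $\mathrm{Sat}_S(I) = IR_S \cap R = \bigcap (P_j \cap R)$, a finite intersection of primary ideals of $R$, which combined with the hypothesis $\mathrm{Sat}_S(I) = I : t$ yields (2) with $s' = t$.

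The step that needs the most care, and where I expect the only real subtlety to lie, is $(1) \Rightarrow (3)$. Writing $I : s = Q_1 \cap \cdots \cap Q_m$ with each $Q_j$ being $S$-primary with witness $u_j$, extension to $R_S$ is easy: finite intersections commute with localization, so $IR_S = (I : s) R_S = \bigcap Q_j R_S$ is a finite intersection of primary ideals by Sub-lemma A. For the saturation identity I would set $t := s \, u_1 \cdots u_m$ and aim for $I : t = \mathrm{Sat}_S(I)$. The key computation is the equality $Q_j : (u_1 \cdots u_m) = Q_j : u_j$: if $a (u_1 \cdots u_m) \in Q_j$, applying the $S$-primary property of $Q_j$ to this factorization forces $u_j a \in Q_j$, since the alternative $u_j (u_1 \cdots u_m) \in \sqrt{Q_j}$ contradicts $\sqrt{Q_j} \cap S = \emptyset$. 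Given this, $I : t = (I : s) : (u_1 \cdots u_m) = \bigcap (Q_j : u_j) = \bigcap (Q_j R_S \cap R) = IR_S \cap R = \mathrm{Sat}_S(I)$, which closes the cycle.
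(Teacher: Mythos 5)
Your proof is correct, but it is organized quite differently from the paper's. The paper does not prove the equivalence $(1)\Leftrightarrow(3)$ at all: it cites it wholesale from Visweswaran (\cite[Lemma 3.1]{v22}), and then only supplies the two implications $(1)\Rightarrow(2)$ and $(2)\Rightarrow(1)$, using \cite[Theorem 2.7]{v22} (that $Q:s_i$ is primary for a suitable $s_i\in S$ when $Q$ is $S$-primary) as a black box for the forward direction. You instead run the self-contained cycle $(2)\Rightarrow(1)\Rightarrow(3)\Rightarrow(2)$, replacing both citations by your Sub-lemmas A and B. Your $(2)\Rightarrow(1)$ is essentially identical to the paper's (absorb the components meeting $S$ into the denominator), and your key identity $Q_j:(u_1\cdots u_m)=Q_j:u_j$ is the same colon-stabilization trick the paper uses in the form $(Q_i:s_i):\prod_{j\neq i}s_j=Q_i:s_i$, just phrased with the $S$-primary witness instead of the primary colon ideal. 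The one point worth flagging is that Sub-lemma A as you state it is doing real work: the contraction formula $QR_S\cap R=Q:u$ and the primariness of $QR_S$ both need the asymmetric $S$-primary condition applied with the factors in the right order, together with the observation that $\sqrt{Q}\cap S=\emptyset$ follows from $Q\cap S=\emptyset$ because $S$ is multiplicatively closed; these verifications go through (note that the related statement that $Q:u$ itself is primary may require replacing $u$ by a power, but you never use that). What your route buys is independence from \cite{v22}; what the paper's buys is brevity and consistency with the source it relies on elsewhere in Section 3.
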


\begin{proof}
The equivalence of (1) and (3) follows from  \cite[Lemma 3.1]{v22}.
 
(1) $\Rightarrow$ (2)
Suppose that $I:s=Q_1\cap\cdots \cap Q_r$ for some $s\in S$
and $S$-primary ideals $Q_1,\dots, Q_r$ of $R$.
Then there exist elements $s_1,\dots, s_r$ of $S$
such that $Q_i:s_i$ is a primary ideal of $R$
for each $1 \leq i \leq r$
\cite[Theorem 2.7]{v22}.
Let $s'=ss_1\cdots s_r$.
Then $s'\in S$, and $Q_i:s_1\cdots s_r=(Q_i:s_i):\prod_{\substack{1 \leq j \leq r \\ j\neq i}}s_j=Q_i:s_i$
for each $1 \leq i \leq r$.
Therefore $I:s'=\bigcap_{i=1}^{r}(Q_i:s_1\cdots s_r)=\bigcap_{i=1}^{r}(Q_i:s_i)$
is a finite intersection of primary ideals of $R$,
and thus (2) follows.

(2) $\Rightarrow$ (1)
Suppose that $I:s'$ is a finite intersection of primary ideals
$Q_1,\dots, Q_m$ for some $s'\in S$.
Then at least one of $Q_1,\dots, Q_m$, say $Q_1$,
must be disjoint from $S$
since $I$ is disjoint from $S$.
Hence by reordering $Q_1,\dots, Q_m$,
we may assume that there exists $1 \leq \ell \leq m$
such that $Q_i$ is disjoint from $S$
if and only if $i \leq \ell$. It follows that for each integer $i$
such that $\ell< i \leq m$,
there exists $s_i\in Q_i\cap S$.
Set $s=s's_{l+1}\cdots s_{m}$.
Then $s\in S$, and we have $I:s=Q_1\cap\cdots\cap Q_{\ell}$.
Since $Q_1,\dots, Q_{\ell}$ are $S$-primary, (1) follows.
\end{proof}

\begin{theorem}\label{one-to-one cor(Laskerian)}
Let $R$ be a commutative ring with identity,
$S$ a multiplicative subset of $R$ and
$I$ an ideal of $R$ disjoint from $S$.
Suppose that $R$ is an $S$-Laskerian ring.
Then $IR_{S}$ is an $n$-absorbing ideal of $R_{S}$
if and only if $I$ is an $S$-$n$-absorbing ideal of $R$.
\end{theorem}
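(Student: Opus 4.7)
The forward direction ($I$ is $S$-$n$-absorbing $\Rightarrow$ $IR_S$ is $n$-absorbing) is precisely Proposition \ref{quotient extension}, and notably does not require the $S$-Laskerian hypothesis. So my plan is to establish the converse: assuming $IR_S$ is $n$-absorbing in $R_S$, I want to exhibit $t \in S$ such that $I:t$ is $n$-absorbing in $R$, after which Proposition \ref{colon} immediately gives that $I$ is $S$-$n$-absorbing.

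To obtain such a $t$, I invoke Lemma \ref{v31}. Since $R$ is $S$-Laskerian, the ideal $I$ itself is a finite intersection of $S$-primary ideals; that is, $I = I:1$ satisfies condition (1) of Lemma \ref{v31} (with $s=1$). Hence condition (3) of that lemma also holds, producing $t \in S$ with ${\rm Sat}_S(I) = I:t$. Note that $I:t$ is a proper ideal, for otherwise $t \in I \cap S = \emptyset$. I then check the $n$-absorbing property of $I:t$ by a direct localize-and-contract argument: for $r_1,\dots,r_{n+1}\in R$ with $r_1\cdots r_{n+1}\in I:t$, we have $tr_1\cdots r_{n+1}\in I$, and since $t$ is a unit in $R_S$, this gives $\frac{r_1}{1}\cdots\frac{r_{n+1}}{1} \in IR_S$. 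Applying the $n$-absorbing hypothesis on $IR_S$ yields some $1\le j\le n+1$ with $\frac{\prod_{i\ne j}r_i}{1}\in IR_S$. Contracting to $R$ gives $s\in S$ with $s\prod_{i\ne j}r_i\in I$, so $\prod_{i\ne j}r_i\in I:s\subseteq {\rm Sat}_S(I)=I:t$; equivalently, $t\prod_{i\ne j}r_i\in I$, which is exactly the $n$-absorbing condition for $I:t$.

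The main obstacle—and the reason the $S$-Laskerian assumption is needed—is obtaining a single uniform denominator $t$. Without it, the routine contraction of $\frac{\prod_{i\ne j}r_i}{1} \in IR_S$ to $R$ only yields an element $s\in S$ that in principle depends on the choice of the $r_i$, and this is insufficient to conclude that a \emph{fixed} colon ideal $I:t$ is $n$-absorbing. Lemma \ref{v31}, powered by $S$-Laskerianity, converts the pointwise relation ``$x\in {\rm Sat}_S(I)$ for each $x$'' into the single colon equation ${\rm Sat}_S(I)=I:t$, which is precisely what makes the argument of the preceding paragraph close.
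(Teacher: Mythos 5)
Your proof is correct and follows essentially the same route as the paper's: one direction is Proposition \ref{quotient extension}, and for the converse both arguments use the $S$-Laskerian hypothesis via Lemma \ref{v31} to produce $t\in S$ with ${\rm Sat}_S(I)=I:t$ and then conclude with Proposition \ref{colon}. The only difference is that where the paper cites \cite[Theorem 4.2(a)]{Anderson} for the fact that ${\rm Sat}_S(I)=IR_S\cap R$ inherits the $n$-absorbing property from $IR_S$, you verify this by a direct localize-and-contract computation.
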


\begin{proof}
Suppose that $IR_{S}$ is an $n$-absorbing ideal of $R_{S}$.
Then ${\rm Sat}_S(I)$ is an $n$-absorbing ideal of $R$ \cite[Theorem 4.2(a)]{Anderson}.
Since $R$ is an $S$-Laskerian ring, we have ${\rm Sat}_S(I)=I:s$ for some $s\in S$ by Lemma \ref{v31}. 
Hence by Proposition \ref{colon},
$I$ is an $S$-$n$-absorbing ideal of $R$.
The converse follows from Proposition \ref{quotient extension}.
\end{proof}

Now, we seek a sufficient condition for the equivalence of (1) and (3) of the motivating theorem.
Let $R$ be a commutative ring with identity and
let $I$ be an ideal of $R$.
Recall that $I$ is {\it divided}
if for each element $a$ of $R$, either $a\in I$ or $I\subsetneq aR$.
By $R$ is a {\it divided ring}
we mean in which every prime ideal is divided.
Finally, $R$ is {\it locally divided}
if $R_{M}$ is a divided ring for each maximal ideal $M$ of $R$.
The examples of locally divided rings abound; zero-dimensional rings,
one-dimensional domains and Pr{\"u}fer domains are locally divided rings.
Moreover, a direct sum of finitely many locally divided rings is
a locally divided ring \cite[Proposition 2.1(b)]{bd01}.

\begin{theorem}\label{one-to-one cor(locally divided)}
Let $R$ be a commutative ring with identity,
$S$ a multiplicative subset of $R$ and
$I$ an ideal of $R$ disjoint from $S$.
If $IR_{S}$ is an $n$-absorbing ideal of $R_{S}$
and ${\rm Sat}_S(I)=I:s$ for some $s\in S$,
then $I$ is an $S$-$n$-absorbing ideal of $R$.
Moreover, the converse holds if $R$ is locally divided.
\end{theorem}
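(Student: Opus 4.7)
The forward direction follows the same logic as Theorem \ref{one-to-one cor(Laskerian)}: if $IR_S$ is $n$-absorbing in $R_S$, then by \cite[Theorem 4.2(a)]{Anderson}, ${\rm Sat}_S(I) = IR_S \cap R$ is $n$-absorbing in $R$. Combined with the hypothesis ${\rm Sat}_S(I) = I:s$, this makes $I:s$ an $n$-absorbing ideal of $R$, and Proposition \ref{colon} yields that $I$ is $S$-$n$-absorbing.

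For the converse, suppose $R$ is locally divided and $I$ is $S$-$n$-absorbing associated to $t \in S$. Proposition \ref{quotient extension} immediately gives that $IR_S$ is $n$-absorbing in $R_S$, so the real task is to exhibit a single $s \in S$ with ${\rm Sat}_S(I) = I:s$. I plan to work with $J := I:t^{n+1}$, which is $n$-absorbing in $R$ by (the proof of) Proposition \ref{colon} and which satisfies ${\rm Sat}_S(J) = {\rm Sat}_S(I)$ since $(I:t^{n+1})R_S = IR_S$. The strategy is to establish a primary decomposition of $J$ in $R$ using the locally divided hypothesis, and then invoke the mechanism of Theorem \ref{one-to-one cor(Laskerian)} via Lemma \ref{v31}: a primary decomposition $J = Q_1 \cap \cdots \cap Q_k$ produces $s' \in S$ (a product of one element from each $Q_i \cap S$ over those $i$ for which $Q_i$ meets $S$) with $J:s' = {\rm Sat}_S(J) = {\rm Sat}_S(I)$, after which $s := t^{n+1}s' \in S$ works since $I:s = J:s'$.

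The crux is therefore producing a primary decomposition of $J$ under the locally divided hypothesis. The minimal primes $P_1,\dots,P_k$ of $J$ number at most $n$ by \cite[Theorem 2.5]{Anderson}, and in a locally divided ring they lie in pairwise disjoint sets of maximal ideals: two incomparable primes cannot simultaneously lie in a single maximal ideal $M$, because ${\rm Spec}(R_M)$ is a chain. Localizing at each maximal ideal $M \supseteq P_i$, the ideal $JR_M$ has $P_iR_M$ as its unique minimal prime, and in the divided ring $R_M$ this prime-radical condition should force $JR_M$ to be $P_iR_M$-primary. Setting $Q_i := \bigcap_{M \supseteq P_i}(JR_M \cap R)$ then produces a $P_i$-primary ideal of $R$, and the local-global principle for ideals delivers $J = Q_1 \cap \cdots \cap Q_k$. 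The main obstacle is the step verifying that an $n$-absorbing ideal with prime radical in a divided ring is genuinely primary---a claim transparent in valuation or Pr{\"u}fer domains by \cite[Theorem 6.8]{Anderson}, but which in full generality for divided rings must be extracted from the chain structure of ${\rm Spec}(R_M)$ together with the $n$-absorbing property of $JR_M$ itself.
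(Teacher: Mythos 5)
Your forward direction and the overall skeleton of the converse coincide exactly with the paper's proof: get $IR_S$ $n$-absorbing from Proposition \ref{quotient extension}, get an $n$-absorbing colon ideal $I:t^{n+1}$ from Proposition \ref{colon}, decompose it into finitely many primary ideals, and feed that decomposition into Lemma \ref{v31} to obtain ${\rm Sat}_S(I)=I:s$. The one step you do not actually prove is the one you correctly identify as the crux: that an $n$-absorbing ideal of a locally divided ring is a finite intersection of (at most $n$) primary ideals. The paper does not prove this either --- it cites \cite[Corollary 13]{c211} verbatim --- so your proposal has a genuine gap exactly where the paper has a citation, and the cleanest repair is simply to invoke that result.

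If you insist on deriving the decomposition yourself, be aware of two points. First, the local step you flag as the ``main obstacle'' is in fact not hard: in a divided ring every prime is comparable to every principal ideal, so if $ab\in J$ with $a\notin\sqrt{J}=P$, then $b\in P\subseteq aR$, and iterating gives $b=a^ne$ for some $e$; applying the $n$-absorbing property to the product $a\cdots a\cdot(ae)$ (with $n$ copies of $a$) and noting $a^n\notin J$ yields $b=a^{n-1}(ae)\in J$, so $J$ is $P$-primary. Second, the globalization step is where real care is needed: your $Q_i=\bigcap_{M\supseteq P_i}(JR_M\cap R)$ is an intersection over possibly infinitely many maximal ideals, and while the ``$ab\in Q_i$, $a\notin P_i$ $\Rightarrow$ $b\in Q_i$'' property survives arbitrary intersections, the equality $\sqrt{Q_i}=P_i$ (needed for $Q_i$ to be $P_i$-primary, and needed by Lemma \ref{v31} to sort the $Q_i$ by whether they meet $S$) does not follow from ``the local-global principle'' alone and must be argued, as must $J=Q_1\cap\cdots\cap Q_k$. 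These are precisely the technicalities that \cite[Corollary 13]{c211} packages up, which is why the paper cites it rather than reproving it.
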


\begin{proof}
Suppose that $IR_{S}$ is an $n$-absorbing ideal of $R_{S}$
and ${\rm Sat}_S(I)=I:s$ for some $s\in S$.
Then $I:s$ is an $n$-absorbing ideal of $R$ \cite[Theorem 4.2(a)]{Anderson}.
Hence $I$ is an $S$-$n$-absorbing ideal of $R$ by Proposition \ref{colon}.
For the remainder argument,
suppose that $R$ is a locally divided ring and $I$ is an $S$-$n$-absorbing ideal of $R$.
Then $IR_S$ is an $n$-absorbing ideal of $R_S$ by Proposition \ref{quotient extension}, and there exists an element $s\in S$ such that
$I:s$ is an $n$-absorbing ideal of $R$ by Proposition \ref{colon}.
Therefore $I:s=Q_{1}\cap\cdots \cap Q_{m}$ for some primary ideals $Q_{1},\dots, Q_{m}$ of $R$ with $m\le n$ \cite[Corollary 13]{c211}.
Thus by Lemma \ref{v31}, ${\rm Sat}_S(I)=I:s$ for some $s\in S$.
\end{proof}

We were unable to decide whether the converse of Theorem \ref{one-to-one cor(locally divided)} holds when the condition `locally divided' is dropped. Hence we raise the following question:

\begin{question}
Is the converse of Theorem \ref{one-to-one cor(locally divided)}
true without $R$ being locally divided?
\end{question}

At the end of this section, we can derive the following.

\begin{corollary}
Let $R$ be a commutative ring with identity,
$S$ a multiplicative subset of $R$ and
$I$ an ideal of $R$ disjoint from $S$.
Then the following assertions hold.
\begin{enumerate}
\item[(1)] If $R$ is an $S$-Laskerian ring, then the following conditions are equivalent.
\begin{enumerate}
\item[(a)] $I$ is an $S$-$n$-absorbing ideal of $R$.
\item[(b)] $IR_S$ is an $n$-absorbing ideal of $R$.
\item[(c)] $I:s$ is an $n$-absorbing ideal of $R$ for some $s \in S$.
\end{enumerate}
\item[(2)] If $R$ is a locally divided ring, then the following conditions are equivalent.
\begin{enumerate}
\item[(a)] $I$ is an $S$-$n$-absorbing ideal of $R$.
\item[(b)] $IR_S$ is an $n$-absorbing ideal of $R$ and ${\rm Sat}_S(I) = I : s$ for some $s \in S$.
\item[(c)] $I:s$ is an $n$-absorbing ideal of $R$ for some $s \in S$.
\end{enumerate}    

\end{enumerate}
\end{corollary}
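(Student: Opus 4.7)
The plan is to assemble this corollary directly from the three results already established in this section, namely Proposition \ref{colon}, Proposition \ref{quotient extension}, Theorem \ref{one-to-one cor(Laskerian)}, and Theorem \ref{one-to-one cor(locally divided)}, together with Lemma \ref{v31}. There is essentially no new content to prove; the task is just to verify that the three conditions in each part close up into a cycle of implications.

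For part (1), I would first note that the equivalence (a)$\Leftrightarrow$(c) is exactly the content of Proposition \ref{colon} and requires no hypothesis on $R$. The equivalence (a)$\Leftrightarrow$(b) is the statement of Theorem \ref{one-to-one cor(Laskerian)}, which uses the $S$-Laskerian hypothesis. Together these give the equivalence of all three conditions in part (1), so part (1) is immediate.

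For part (2), the equivalence (a)$\Leftrightarrow$(c) is again Proposition \ref{colon}. For (a)$\Rightarrow$(b), I would apply Proposition \ref{quotient extension} to get that $IR_S$ is an $n$-absorbing ideal of $R_S$, and then use the ``Moreover'' portion of Theorem \ref{one-to-one cor(locally divided)} (whose proof in turn invokes \cite[Corollary 13]{c211} and Lemma \ref{v31} to produce the primary decomposition of $I:s$ and conclude ${\rm Sat}_S(I)=I:s$ for some $s\in S$), using the locally divided hypothesis. The implication (b)$\Rightarrow$(a) is precisely the first part of Theorem \ref{one-to-one cor(locally divided)}, which does not require the locally divided condition. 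This closes the cycle.

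There is no real obstacle: every implication is a direct quotation of a result that has already been proved in this section. The only thing worth being careful about is to state clearly which hypothesis is used for which direction, in particular to point out that (a)$\Leftrightarrow$(c) requires no hypothesis on $R$, that (b)$\Rightarrow$(a) in part (2) is unconditional, and that the locally divided (respectively $S$-Laskerian) assumption is invoked only to pass from an $n$-absorbing condition on $IR_S$ to an $S$-$n$-absorbing condition on $I$, i.e. to obtain the equality ${\rm Sat}_S(I)=I:s$ via Lemma \ref{v31}.
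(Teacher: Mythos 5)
Your proposal is correct and matches the paper's intent exactly: the corollary is stated without proof as an immediate assembly of Proposition \ref{colon}, Proposition \ref{quotient extension}, Theorem \ref{one-to-one cor(Laskerian)}, and Theorem \ref{one-to-one cor(locally divided)}, which is precisely the cycle of implications you describe. Your added care about which hypothesis each implication uses is accurate and, if anything, more explicit than the paper.
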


\section{The functions $\omega_{R,S}(I)$ and $\Omega_S(R)$}\label{sec 4}

Let $R$ be a commutative ring with identity,
$S$ a multiplicative subset of $R$ and
$I$ an ideal of $R$ disjoint from $S$.
It is easy to show that if $I$ is an $S$-$n$-absorbing ideal of $R$,
then $I$ is an $S$-$m$-absorbing ideal of $R$ for all $m \geq n$.
Hence if 
$I$ is an $S$-$n$-absorbing ideal for some $n \in \mathbb{N}$,
then there exists the minimal positive integer $m$ such that
$I$ is an $S$-$m$-absorbing ideal.
Such minimal integer $m$ is denoted by $\omega_{R,S}(I)$ and
we set $\omega_{R,S}(I) = \infty$ if $I$ is not an $S$-$n$-absorbing ideal of $R$
for any $n \in \mathbb{N}$.
Recall that $\omega_R(I)$ indicates the smallest $n$ such that $I$ is an $n$-absorbing ideal of $R$ \cite[p.1649]{Anderson}.
Similarly, $\omega_{R}(I) = \infty$ if $I$ is not an $n$-absorbing ideal of $R$
for any $n \in \mathbb{N}$.
In other words, when $S$ consists of units, then we write $\omega_{R}(I)=\omega_{R,S}(I)$.

At the beginning of this section we examine
simple results that allow us to determine
the value of $\omega_{R,S}(I)$.

\begin{proposition}
Let $R$ be a commutative ring with identity,
$S$ a multiplicative subset of $R$ and
$\varphi : R\to T$ a homomorphsim.
Suppose that $I$ is an ideal of $R$ disjoint from $S$ and
$J$ is an ideal of $\varphi(R)$ disjoint from $\varphi(S)$.
Then the following assertions hold.
\begin{enumerate}
\item[(1)]
If $\ker(\varphi) \subseteq I$,
then $\omega_{R,S}(I) = \omega_{\varphi(R),\varphi(S)}(\varphi(I))$.
\item[(2)]
$\omega_{R,S}(\varphi^{-1}(J)) = \omega_{\varphi(R),\varphi(S)}(J)$.
\item[(3)]
There exists an element $s \in S$ such that
$\omega_{R,S}(I) = \omega_{R}(I:s)$.
\item[(4)]
If $R$ is an $S$-Laskerian ring,
then $\omega_{R,S}(I) = \omega_{R_S}(IR_S)$.
\item[(5)]
If $R$ is a locally divided ring,
then $\omega_{R,S}(I) = \omega_{R}({\rm Sat}_S(I)) = \omega_{R_S}(IR_S)$.
\end{enumerate}
\end{proposition}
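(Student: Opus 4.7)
The plan is to reduce each equality to a biconditional already established earlier in the paper, and then extract the smallest $n$ realizing each side. Monotonicity of both the $S$-$n$-absorbing and $n$-absorbing properties in $n$, noted at the start of this section, makes this extraction routine; I will adopt the convention $\omega = \infty$ when neither side admits a finite $n$.

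For parts (1) and (2) I will simply invoke Proposition \ref{homo cor}(1) and (2), respectively: in each case the biconditional holds for every $n$, so minima coincide. For (3), I will apply Proposition \ref{colon} with $n := \omega_{R,S}(I)$; the forward direction of that proof realizes $s := t^{n+1}$ (with $t$ the element associated to $I$) as an element for which $I:s$ is $n$-absorbing, giving $\omega_R(I:s) \le n$, while a strict inequality would, by the converse direction of Proposition \ref{colon}, contradict the minimality of $n$. Part (4) will follow immediately from Theorem \ref{one-to-one cor(Laskerian)}.

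For (5), I will split the three-way equality into $\omega_{R,S}(I) = \omega_R({\rm Sat}_S(I))$ and $\omega_R({\rm Sat}_S(I)) = \omega_{R_S}(IR_S)$. The second holds unconditionally: contraction is \cite[Theorem 4.2(a)]{Anderson} applied to ${\rm Sat}_S(I) = IR_S \cap R$, while extension reduces to a short manipulation on fractions, grouping $s \cdot a_1$ as a single factor so that the $n$-absorbing property of ${\rm Sat}_S(I)$ can be invoked. For the first equality, under the locally divided hypothesis, Theorem \ref{one-to-one cor(locally divided)} together with Lemma \ref{v31} furnishes an $s \in S$ with ${\rm Sat}_S(I) = I:s$; Proposition \ref{colon} applied to this specific $s$ then yields the equivalence ``$I$ is $S$-$n$-absorbing iff ${\rm Sat}_S(I)$ is $n$-absorbing'', and taking minima completes the argument.

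The principal obstacle in (5) will be ensuring the element $s$ witnessing ${\rm Sat}_S(I) = I:s$ also witnesses $n$-absorbing-ness via Proposition \ref{colon}: if $I:s'$ is $n$-absorbing for some possibly different $s'$, I must verify that ${\rm Sat}_S(I) = I:s$ itself remains $n$-absorbing, which I will do by a direct computation mimicking the forward direction of Proposition \ref{colon}. The edge case $\omega_{R,S}(I) = \infty$ will be handled by contrapositive, using that $R_S$ inherits the locally divided property and invoking \cite[Corollary 13]{c211} together with Lemma \ref{v31} to exhibit ${\rm Sat}_S(I) = I:t$ from a finite $\omega_{R_S}(IR_S)$.
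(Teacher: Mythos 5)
Your overall strategy coincides with the paper's (whose entire proof is a one-line citation of Propositions \ref{homo cor} and \ref{colon} and Theorems \ref{one-to-one cor(Laskerian)} and \ref{one-to-one cor(locally divided)}), and your treatments of (1)--(4), as well as of (5) in the case $\omega_{R,S}(I)<\infty$, are correct; in particular your observation that in (3) one must rule out a strictly smaller $\omega_R(I:s)$ via the converse direction of Proposition \ref{colon} is exactly the right point to make.

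There is, however, a genuine gap in your handling of the edge case of (5). To show $\omega_{R_S}(IR_S)<\infty$ implies $\omega_{R,S}(I)<\infty$ you propose to extract ``${\rm Sat}_S(I)=I:t$ for some $t\in S$'' from a finite primary decomposition of $IR_S$ (obtained from \cite[Corollary 13]{c211}) via Lemma \ref{v31}; but condition (3) of Lemma \ref{v31} is the \emph{conjunction} of the decomposition of $IR_S$ and the saturation identity, so the lemma cannot produce the saturation identity from the decomposition alone --- the step is circular. Worse, the gap cannot be repaired: the paper's own Example \ref{counter example of converse quotient} together with Remark \ref{omage infinite} exhibits a locally divided (indeed Pr\"ufer) domain $R=\mathbb{Z}+X\mathbb{Q}[X]$ and an ideal $I$ disjoint from $S$ with $\omega_{R_S}(IR_S)=n<\infty$ and ${\rm Sat}_S(I)\neq I:s$ for every $s\in S$, whence $\omega_{R,S}(I)=\infty$ while $\omega_{R}({\rm Sat}_S(I))\leq n$ (contraction preserves $n$-absorbingness). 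So the implication you are trying to prove in that edge case is false, and assertion (5) as literally stated holds only under an additional hypothesis (e.g.\ $\omega_{R,S}(I)<\infty$, or ${\rm Sat}_S(I)=I:s$ for some $s\in S$). You should either add such a hypothesis and delete the contrapositive paragraph, or record the example as showing the three quantities can differ when $\omega_{R,S}(I)=\infty$.
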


\begin{proof}
These results follow directly from Proposition \ref{homo cor} and \ref{colon},
and Theorem \ref{one-to-one cor(Laskerian)} and \ref{one-to-one cor(locally divided)}.
\end{proof}

\begin{proposition}\label{product omega}
Let $R_1, R_2$ be commutative rings with identity and
let $S_1, S_2$ be multiplicative subsets of $R_1,R_2$, respectively.
For $j=1,2$, let $I_j$ be an ideal of $R_j$ disjoint from $S_j$.
Let $R = R_1 \times R_2$, $S = S_1 \times S_2$ and $I = I_1 \times I_2$.
Then $\omega_{R, S}(I)=\omega_{R_1,S_1}(I_1) + \omega_{R_2,S_2}(I_2)$.
\end{proposition}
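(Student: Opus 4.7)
The plan is to reduce the $S$-version of the claim to the classical product formula for $n$-absorbing ideals via Proposition \ref{colon}. That proposition gives $\omega_{R,S}(I)=\min_{s\in S}\omega_{R}(I:s)$, with the minimum attained whenever this value is finite, and analogously $\omega_{R_j,S_j}(I_j)=\min_{s_j\in S_j}\omega_{R_j}(I_j:s_j)$ for $j=1,2$. A direct calculation shows
\[
(I_1\times I_2):(s_1,s_2)=(I_1:s_1)\times(I_2:s_2)
\]
inside $R_1\times R_2$, so granted the classical product formula
\[
\omega_{R_1\times R_2}(J_1\times J_2)=\omega_{R_1}(J_1)+\omega_{R_2}(J_2)
\]
for proper ideals $J_j$ of $R_j$, taking the minimum over $(s_1,s_2)\in S_1\times S_2$ of both sides and using that $\min_{(s_1,s_2)}[f(s_1)+g(s_2)]=\min_{s_1}f(s_1)+\min_{s_2}g(s_2)$ would yield the proposition.

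The bulk of the work is thus to establish the product formula. For the upper bound, I would first record an auxiliary fact: if $J$ is an $n$-absorbing ideal and $r_{1}\cdots r_{m}\in J$ with $m>n$, then some $n$ of the $r_{i}$'s already have product in $J$, proved by a short induction on $m-n$ by collapsing two factors and invoking the defining property. Applied coordinatewise to any $n_1+n_2+1$ factors $(a_i,b_i)$ whose product lies in $J_1\times J_2$ (where $n_j=\omega_{R_j}(J_j)$), this fact yields index sets $A,B$ with $|A|=n_1$, $|B|=n_2$ and $\prod_{i\in A}a_{i}\in J_1$, $\prod_{i\in B}b_{i}\in J_2$. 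Because $|A\cup B|\le n_1+n_2<n_1+n_2+1$, some index lies outside $A\cup B$, and dropping that coordinate produces a product in $J_1\times J_2$.

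For the lower bound, if $n_j=\omega_{R_j}(J_j)$ is finite, choose $a_{1},\dots,a_{n_1}\in R_1$ with $a_{1}\cdots a_{n_1}\in J_1$ but no proper subproduct in $J_1$, and analogously $b_{1},\dots,b_{n_2}\in R_2$. The $n_1+n_2$ elements $(a_{1},1),\dots,(a_{n_1},1),(1,b_{1}),\dots,(1,b_{n_2})$ have product in $J_1\times J_2$, and if $J_1\times J_2$ were $(n_1+n_2-1)$-absorbing, omitting any single element would force a proper subproduct of the $a_{i}$'s into $J_1$ or of the $b_{j}$'s into $J_2$, contradicting minimality. The case where some $n_j$ is infinite is handled by a variant construction that puts $0$ in the unused coordinate to keep the total product inside $J_1\times J_2$ while still forcing a contradiction, showing $\omega_{R_1\times R_2}(J_1\times J_2)=\infty$.

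The main obstacle will be the auxiliary subset-extraction fact that powers the upper bound; once one is comfortable reducing products of many factors to products of exactly $n$ factors inside an $n$-absorbing ideal, everything else amounts to routine bookkeeping with index sets, with the colon identity in the product ring, and with the interaction of $\min$ and addition.
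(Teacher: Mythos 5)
Your proposal is correct, but it reaches the result by a different route than the paper. The paper argues directly from the definition of $S$-$n$-absorbing: for $\omega_{R,S}(I)\ge m+n$ it exhibits the witnesses $(a_1,1),\dots,(a_m,1),(1,b_1),\dots,(1,b_n)$ built from tuples witnessing $\omega_{R_1,S_1}(I_1)=m$ and $\omega_{R_2,S_2}(I_2)=n$, and for $\omega_{R,S}(I)\le m+n$ it extracts, from any $m+n+1$ factors whose product lies in $I_1\times I_2$, index sets of sizes $m$ and $n$ that work in each coordinate and takes their union --- exactly the combinatorics you package into your ``classical product formula.'' You instead reduce to ordinary $n$-absorbing ideals via Proposition \ref{colon}, using $\omega_{R,S}(I)=\min_{s\in S}\omega_R(I:s)$ (the minimum is always attained in $\mathbb{N}\cup\{\infty\}$), the identity $(I_1\times I_2):(s_1,s_2)=(I_1:s_1)\times(I_2:s_2)$, and the interchange of $\min$ with the sum; your subset-extraction lemma and the $0$-padding trick for the infinite case are both standard and correct, and this reduction would even let you quote the product formula for ordinary $n$-absorbing ideals from \cite{Anderson} rather than reprove it. A genuine advantage of your route is that the lower bound only requires a tuple defeating the plain $(n_1+n_2-1)$-absorbing condition for $(I_1:s_1)\times(I_2:s_2)$, whereas the paper's direct argument asserts a single tuple $a_1,\dots,a_m$ with $s\prod_{\ell\ne i}a_\ell\notin I_1$ for \emph{every} $s\in S_1$ simultaneously --- a uniform-witness claim stronger than the literal negation of ``$S_1$-$(m-1)$-absorbing,'' which the paper does not justify (it can be repaired by first fixing a hypothetical associated element $(s_1,s_2)$ and choosing witnesses against that specific pair). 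Your version sidesteps this issue entirely.
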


\begin{proof}
Suppose that $\omega_{R_1,S_1}(I_1) = m$ and
$\omega_{R_2,S_2}(I_2) = n$.
We first consider the case when $m,n\in\mathbb{N}$.
Then there exist elements
$a_1,\dots,a_m \in R_1$ and $b_1,\dots,b_n \in R_2$ such that
$a_1\cdots a_m \in I_1$, $b_1 \cdots b_n \in I_2$ and
$s\prod_{\substack{1 \leq \ell \leq m \\ \ell \neq i}}a_{\ell} \notin I_1$ and
$t\prod_{\substack{1 \leq \ell \leq m \\ \ell \neq j}}b_{\ell} \notin I_2$ for all 
$1 \leq i \leq m$, $1 \leq j \leq n$, $s \in S_1$ and $t \in S_2$.
Hence
$\big(\prod_{1 \leq \ell \leq m}(a_{\ell},1)\big)
\big(\prod_{1\leq \ell \leq n}(1,b_{\ell})\big) =
(a_1\cdots a_m, b_1\cdots b_n) \in I_1 \times I_2$,
but neither
$(s,t)\big(\prod_{\substack{1 \leq \ell \leq m \\ \ell \neq i}}(a_{\ell},1)\big)
\big(\prod_{1\leq \ell \leq n}(1,b_{\ell})\big)$ nor
$(s,t)\big(\prod_{1\leq \ell \leq m}(a_{\ell},1)\big)
\big(\prod_{\substack{1 \leq \ell \leq m \\ \ell \neq j}}(1,b_{\ell})\big)$
belong to $I_1 \times I_2$  for all 
$1 \leq i \leq m$, $1 \leq j \leq n$.
Therefore $m+n \leq \omega_{R,S}(I)$.
For the reverse inequality,
let $N = m+n+1$ and
suppose that $(a_1,b_1),\dots,(a_N,b_N) \in R_1 \times R_2$ with
$\prod_{1 \leq \ell \leq N}(a_{\ell},b_{\ell}) \in I_1 \times I_2$.
Then there are $\{i_1,\dots,i_m\}, \{j_1,\dots,j_n\} \subseteq \{1,\dots,N\}$
such that $s_1 a_{i_1} \cdots a_{i_m} \in I_1$ and
$s_2 b_{j_1} \cdots b_{j_n} \in I_2$ for some $s_1,s_2 \in S$.
Let $K = \{i_1, \dots, i_m\} \cup \{j_1, \dots, j_n\}$.
Then $(s_1,s_2) \prod_{i \in K} (a_i,b_i) \in I_1 \times I_2$.
Since $|K| \leq m+n$,
$\omega_{R,S}(I) \leq m+n$.
Thus $\omega_{R,S}(I) = m+n = \omega_{R_1,S_1}(I_1) + \omega_{R_2,S_2}(I_2)$.
This proof can be easily adapted to show that $\omega_{R,S}(I) =\infty$
if and only if $m=\infty$ or $n=\infty$, which yields the desired conclusion.
\end{proof}

\begin{corollary}\label{product S-n-absorbing}
Let $R_1, R_2$ be commutative rings with identity and
let $S_1, S_2$ be multiplicative subsets of $R_1,R_2$, respectively.
For $j=1,2$, let $I_j$ be an ideal of $R_j$ disjoint from $S_j$.
If $I_1$ is an $S_1$-$m$-absorbing ideal of $R_1$ and
$I_2$ is an $S_2$-$n$-absorbing ideal of $R_2$,
then $I_1 \times I_2$ is an $(S_1 \times S_2)$-$(m+n)$-absorbing ideal of $R_1 \times R_2$.
\end{corollary}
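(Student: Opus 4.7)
The statement is an immediate consequence of the preceding Proposition \ref{product omega}, so the plan is essentially a one-line deduction. The hypothesis that $I_1$ is an $S_1$-$m$-absorbing ideal of $R_1$ translates, by the definition of $\omega_{R_1,S_1}$, into the inequality $\omega_{R_1,S_1}(I_1) \leq m$, and likewise $\omega_{R_2,S_2}(I_2) \leq n$. Applying Proposition \ref{product omega} to the product ring $R = R_1 \times R_2$, the product multiplicative set $S = S_1 \times S_2$, and the product ideal $I = I_1 \times I_2$ then gives
\[
\omega_{R,S}(I) \;=\; \omega_{R_1,S_1}(I_1) + \omega_{R_2,S_2}(I_2) \;\leq\; m+n.
\]
By the monotonicity remark at the beginning of Section \ref{sec 4} (if $I$ is $S$-$k$-absorbing then it is $S$-$\ell$-absorbing for all $\ell \geq k$), this inequality is equivalent to asserting that $I_1 \times I_2$ is an $S$-$(m+n)$-absorbing ideal of $R_1 \times R_2$.

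The one preliminary verification I would record explicitly is that the product ideal $I_1 \times I_2$ is indeed disjoint from $S_1 \times S_2$, so that the notion of being $(S_1\times S_2)$-$(m+n)$-absorbing is defined; this follows at once from $I_j \cap S_j = \emptyset$ for $j=1,2$, since an element $(s_1,s_2)$ of $S_1 \times S_2$ lying in $I_1 \times I_2$ would force $s_1 \in I_1$. There is no real obstacle here — the heavy lifting was done in Proposition \ref{product omega}, and this corollary merely repackages its conclusion in the language of $S$-$n$-absorbing ideals rather than the invariant $\omega_{R,S}$.
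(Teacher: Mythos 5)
Your proof is correct and follows the same route as the paper, which likewise derives the corollary directly from Proposition \ref{product omega}; you have merely spelled out the translation between the invariant $\omega_{R,S}$ and the $S$-$(m+n)$-absorbing property, together with the routine disjointness check, both of which the paper leaves implicit.
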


\begin{proof}
This result follows directly from Proposition \ref{product omega}.
\end{proof}

\begin{theorem}
Let $R$ be a commutative ring with identity,
$S$ a multiplicative subset of $R$ and
$I$ an ideal of $R$ whose minimal prime ideals are disjoint from $S$.
Suppose that $I$ is an $S$-$n$-absorbing ideal of $R$ that has exactly $n$ minimal prime ideals
$P_1,\dots,P_n$.
Then there exists an element $s \in S$ such that
$sP_1\cdots P_n \subseteq I$.
Moreover, $\omega_{R,S}(I) = n$.
\end{theorem}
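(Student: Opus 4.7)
The plan is to apply Proposition \ref{colon} to pick $s \in S$ with $J := I : s$ an $n$-absorbing ideal of $R$, then combine the generalized $n$-absorbing property with prime avoidance. A routine induction on the number of factors shows that if $r_1 \cdots r_m \in J$ with $m \geq n$, some $n$-element sub-product lies in $J$; equivalently, whenever $r_1 \cdots r_m \in I$, there exist indices $i_1 < \cdots < i_n$ with $s r_{i_1} \cdots r_{i_n} \in I$. By prime avoidance, pick $q_i \in P_i \setminus \bigcup_{j \neq i} P_j$ for each $i$; then $q_1 \cdots q_n \in \bigcap_i P_i = \sqrt{I}$, so $(q_1 \cdots q_n)^m \in I$ for some $m \geq 1$. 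If $m = 1$ then $s q_1 \cdots q_n \in I$ immediately; otherwise, applying the generalized property to the $nm$ factors of $q_1^m \cdots q_n^m$ yields $s q_1^{e_1} \cdots q_n^{e_n} \in I$ for some exponents summing to $n$. If $e_i = 0$ for some $i$, then no factor of the product lies in $P_i$ (as $s \notin P_i$ and $q_j \notin P_i$ for $j \neq i$), contradicting $I \subseteq P_i$. Hence every $e_i = 1$ and $s q_1 q_2 \cdots q_n \in I$; the same argument applies verbatim with each $q_k$ replaced by any other element of $P_k \setminus \bigcup_{j \neq k} P_j$.

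To extend from the generic $q_i$ to arbitrary $p_i \in P_i$, I invoke the following observation: given a prime ideal $P$ and finitely many prime ideals $Q_1, \ldots, Q_\ell$ with $P \not\subseteq Q_i$ for each $i$, the ideal generated by $P \setminus \bigcup_i Q_i$ equals $P$ (apply classical prime avoidance to $P \subseteq \langle P \setminus \bigcup_i Q_i\rangle \cup Q_1 \cup \cdots \cup Q_\ell$, where only the first set is possibly non-prime, to force $P \subseteq \langle P \setminus \bigcup_i Q_i\rangle$). Now proceed in stages $k = 1, 2, \ldots, n$: at stage $k$, fix $p_i \in P_i$ arbitrarily for $i < k$, retain the generic $q_j$ for $j > k$, and consider the colon ideal $T_k := I : (s p_1 \cdots p_{k-1} q_{k+1} \cdots q_n)$. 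By the preceding stage, $T_k$ contains $P_k \setminus \bigcup_{j \neq k} P_j$, hence by the observation $T_k \supseteq P_k$. After $n$ stages, $s p_1 \cdots p_n \in I$ for all $p_i \in P_i$, yielding $s P_1 \cdots P_n \subseteq I$.

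For the equality $\omega_{R,S}(I) = n$, the inequality $\omega_{R,S}(I) \leq n$ is immediate. If $\omega_{R,S}(I) = m < n$, then $I$ is $S$-$m$-absorbing, and by the earlier corollary bounding the number of minimal primes disjoint from $S$ by the $S$-absorbing index, $I$ would have at most $m < n$ minimal primes disjoint from $S$, contradicting the hypothesis that all of $P_1, \ldots, P_n$ are such minimal primes. The main obstacle is orchestrating the stage-by-stage extension: at each stage one must verify that $T_k$ contains $P_k \setminus \bigcup_{j \neq k} P_j$ from the previous stage's conclusion, requiring careful bookkeeping of which coordinates have been fully generalized and which remain generic for the avoidance contradiction to be invoked.
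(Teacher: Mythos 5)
Your proof is correct. The $\omega_{R,S}(I)=n$ part coincides with the paper's argument: both reduce to the fact that an $S$-$m$-absorbing ideal has at most $m$ minimal primes disjoint from $S$ (via Proposition \ref{colon} and \cite[Theorem 2.5]{Anderson}), which contradicts the hypothesis if $m<n$. For the containment $sP_1\cdots P_n\subseteq I$ the two routes genuinely differ. The paper observes that each $P_i$ is a minimal prime of the $n$-absorbing ideal $I:s$ (because $s\notin P_i$), deduces that $I:s$ has exactly $n$ minimal primes, and then simply quotes \cite[Theorem 2.14]{Anderson} to get $P_1\cdots P_n\subseteq I:s$, hence $sP_1\cdots P_n\subseteq I$. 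You instead reprove that containment from scratch: first the generic case $sq_1\cdots q_n\in I$ for $q_i\in P_i\setminus\bigcup_{j\neq i}P_j$, by expanding $(q_1\cdots q_n)^m\in I$ into $nm$ factors and ruling out $e_i=0$ using primeness of $P_i$ together with $s\notin P_i$ (which the hypothesis guarantees, since every minimal prime of $I$ is disjoint from $S$); then the passage to arbitrary generators of each $P_k$ via the observation that $P_k=\langle P_k\setminus\bigcup_{j\neq k}P_j\rangle$ (prime avoidance applied to $P_k\subseteq\langle P_k\setminus\bigcup_{j\neq k}P_j\rangle\cup\bigcup_{j\neq k}P_j$) and the colon ideals $T_k$. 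The stage-by-stage bookkeeping you worried about does close up: each stage only varies the $k$-th slot over the generic set, and the ideal $T_k$ absorbs the extension to all of $P_k$. In effect you have inlined a proof of the relevant half of \cite[Theorem 2.14]{Anderson}, adapted to carry the factor $s$ throughout rather than working inside $I:s$. The paper's route buys brevity; yours buys self-containedness and makes visible exactly where the disjointness of the $P_i$ from $S$ is used.
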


\begin{proof}
As $I$ is an $S$-$n$-absorbing ideal of $R$,
there exists an $s \in S$ such that $I:s$ is an $n$-absorbing ideal of $R$ by Proposition \ref{colon}.
Note that for each $1\leq i\leq n$,
$P_i$ is a minimal prime ideal of $I:s$
since $P_1,\dots,P_n$ are disjoint from $S$,
which means that $\omega_{R}(I:s) \geq n$ \cite[Theorem 2.5]{Anderson}.
Hence $\omega_{R,S}(I) \geq n$,
and it follows that $\omega_{R,S}(I) = n$.
Also, we obtain $\omega_{R}(I:s) = n$.
It follows that there are at most $n$ minimal prime ideals of $I :s$
\cite[Theorem 2.5]{Anderson}.
Therefore $I : s$ has exactly $n$ minimal prime ideals,
so $P_1\cdots P_n\subseteq I:s$ \cite[Theorem 2.14]{Anderson}.
Thus $sP_1\cdots P_n\subseteq I$.
\end{proof}

The next remark shows the existence of ideal $I$ of $R$ with $\omega_{R,S}(I) = \infty$.

\begin{remark}\label{omage infinite}
{\rm 
Let $R$ be a commutative ring with identity and
let $S$ be a multiplicative subset of $R$.
Note that for an ideal $I$ of $R$ disjoint from $S$,
$I$ may not be an $S$-$n$-absorbing ideal of $R$ for any $n\in\mathbb{N}$
while $IR_{S}$ is $n$-absorbing of $R_{S}$ for some $n\in\mathbb{N}$.
Indeed, let $I,S$ and $R$ be as in Example \ref{counter example of converse quotient}.
As mentioned in Example \ref{counter example of converse quotient},
$IR_{S}$ is a prime ideal ({\it i.e.}, a $1$-absorbing ideal) of $R_{S}$,
while $I$ is not an $S$-prime ideal ({\it i.e.},
an $S$-$1$-absorbing ideal) of $R$.
Since $R$ is locally divided, ${\rm Sat}_S(I)\neq I:s$
for any $s\in S$ by Theorem \ref{one-to-one cor(locally divided)}.
Then again by Theorem \ref{one-to-one cor(locally divided)},
$I$ is not an $S$-$n$-absorbing ideal of $R$ for any $n\in\mathbb{N}$.
}
\end{remark}

Let $R$ be a commutative ring with identity and
let $S$ be a multiplicative subset of $R$.
Define $\Omega_S(R) = \{\omega_{R,S}(I) \,|\, I {\rm \ is \ an \ ideal \ of \ } R {\rm \ disjoint \ from \ } S\}$.
As Remark \ref{omage infinite} shows, we have $\Omega_S(R) \not\subseteq \mathbb{N}$ in general.
Naturally, we seek the conditions on a ring $R$ such that 
$\Omega_S(R) \subseteq \mathbb{N}$, which means that
for each ideal $I$ of $R$ disjoint from $S$,
there exists a positive integer $n \in \mathbb{N}$ such that
$I$ is an $S$-$n$-absorbing ideal of $R$.

Recall that $R$ is an {\it $S$-Noetherian ring} if for each ideal $I$ of $R$,
there exist an element $s\in S$ and a finitely generated ideal $J$ of $R$
such that $sI\subseteq J\subseteq I$ \cite{ad02}.
Also, recall that $R$ is said to be {\it arithmetical}
if $(A+B)\cap C=(A\cap C)+(B\cap C)$ 
for all ideals $A,B$ and $C$ of $R$.
Equivalently, $R$ is arithmetical if and only if
$R_{M}$ is a chained ring for all maximal ideals $M$ of $R$ \cite[Theorem 1]{j66}.
Consequently, every arithmetical ring is locally divided,
and an integral domain is an arithmetical ring if and only if it is a Pr{\"u}fer domain.

\begin{theorem} \label{sum}
Let $R$ be a commutative ring with identity and
let $S$ be a multiplicative subset of $R$.
Then the following assertions hold.
\begin{enumerate}
\item[(1)] If $R$ is a strongly $S$-Laskerian ring,
then $\Omega_S(R) \subseteq \mathbb{N}$.
Moreover, the converse holds if $R$ is locally divided.
\item[(2)] If $R$ is an arithmetical ring, then the following are equivalent.
\begin{enumerate}
\item[(a)] $R$ is an $S$-Noetherian ring.
\item[(b)] $R$ is a strongly $S$-Laskerian ring.
\item[(c)] $\Omega_S(R) \subseteq \mathbb{N}$.
\end{enumerate}
\end{enumerate}
\end{theorem}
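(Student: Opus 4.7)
My plan is to reduce the problem to the structure of primary decompositions via the colon-localization correspondence (Lemma \ref{v31} and Proposition \ref{colon}) and then invoke classical results on $n$-absorbing primary ideals, handling the two parts separately while using part (1) as a stepping stone for part (2).

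For part (1), forward direction: assume $R$ is strongly $S$-Laskerian and $I$ is a proper ideal disjoint from $S$. Write $I=Q_1\cap\cdots\cap Q_k$ with each $Q_j$ strongly $S$-primary, say with constants $t_j\in S$ and $m_j\in\mathbb{N}$ satisfying $t_j(\sqrt{Q_j})^{m_j}\subseteq Q_j$. I claim each $Q_j$ is $S$-$m_j$-absorbing. Applying (the analog of) Lemma \ref{v31} to $Q_j$ itself, there exists $u_j\in S$ with $Q_j:u_j$ primary; adjusting by $t_j$, one checks that $Q_j:u_jt_j$ is a primary ideal satisfying $(\sqrt{Q_j:u_jt_j})^{m_j}\subseteq Q_j:u_jt_j$, that is, a classical strongly primary ideal. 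By the standard argument (every primary $Q$ with $(\sqrt{Q})^m\subseteq Q$ is $m$-absorbing: if $r_1\cdots r_{m+1}\in Q$ and no $m$-subproduct is in $Q$, the primary property forces each $r_i\in\sqrt{Q}$, so any $m$-subproduct lies in $(\sqrt{Q})^m\subseteq Q$, a contradiction), $Q_j:u_jt_j$ is $m_j$-absorbing, and by Proposition \ref{colon}, $Q_j$ is $S$-$m_j$-absorbing. Proposition \ref{basic property}(4) then yields that $I$ is $S$-$(m_1+\cdots+m_k)$-absorbing, so $\omega_{R,S}(I)\in\mathbb{N}$.

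For the converse of (1), assume $R$ is locally divided and $\Omega_S(R)\subseteq\mathbb{N}$. For a proper ideal $I$ disjoint from $S$, we have $\omega_{R,S}(I)=n<\infty$, so by Theorem \ref{one-to-one cor(locally divided)}, ${\rm Sat}_S(I)=I:s$ is an $n$-absorbing ideal of $R$ for some $s\in S$, and by \cite[Corollary 13]{c211}, $I:s=Q_1\cap\cdots\cap Q_m$ with each $Q_i$ primary. The Anderson--Badawi fact that every $n$-absorbing ideal $J$ satisfies $(\sqrt{J})^n\subseteq J$, combined with the localization structure afforded by the locally divided hypothesis (which isolates each minimal prime of $I:s$ in its own localization), upgrades each $Q_i$ to a strongly primary ideal. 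Discarding components that meet $S$ and recovering $I$ via the $S$-saturation relationship then produces the desired strongly $S$-primary decomposition, showing $R$ is strongly $S$-Laskerian.

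For part (2), (a) $\Rightarrow$ (b) is the $S$-Noetherian version of the Lasker--Noether theorem within the $S$-Laskerian framework of \cite{v22}; (b) $\Rightarrow$ (c) is immediate from (1); and for (c) $\Rightarrow$ (a), since arithmetical implies locally divided, part (1) gives that $R$ is strongly $S$-Laskerian. The arithmetical hypothesis then allows $S$-finiteness of each ideal to be extracted from the strongly $S$-primary decomposition, using that each localization $R_M$ is a chained ring in which a strongly primary ideal admits a single-generator description up to $S$-multiplication, and then gluing via the finite primary decomposition. The main obstacle I anticipate is this last step, (c) $\Rightarrow$ (a) of part (2): bridging from a finite-intersection primary decomposition to $S$-finite generation of ideals demands a local-to-global argument tailored to the chain-ring structure of the localizations of an arithmetical ring. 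A secondary technical difficulty in the converse of (1) is recovering $I$ itself from the primary decomposition of $I:s$ within the strongly $S$-primary framework, since ${\rm Sat}_S(I)$ can be strictly larger than $I$.
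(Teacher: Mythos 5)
Your forward direction of (1) is correct and is a genuinely different, more self-contained route than the paper's: the paper never decomposes $I$ directly, but instead passes to $R_S$ (which is strongly Laskerian), quotes \cite[Lemma 19]{c211} to get $IR_S$ $n$-absorbing for some $n$, and applies Theorem \ref{one-to-one cor(locally divided)} together with ${\rm Sat}_S(I)=I:s$ from \cite[Proposition 3.2]{v22}. Your argument --- each strongly $S$-primary component $Q_j$ with $t_j(\sqrt{Q_j})^{m_j}\subseteq Q_j$ is $S$-$m_j$-absorbing, then sum up via Proposition \ref{basic property}(4) --- does work, with the small repair that from $u_jP_j\subseteq\sqrt{Q_j}$ (where $P_j=\sqrt{Q_j:u_j}$) one only gets $P_j^{m_j}\subseteq Q_j:t_ju_j^{m_j}$, so the colon element should be $t_ju_j^{m_j}$ rather than $t_ju_j$ before invoking Proposition \ref{colon}.

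The other two implications have genuine gaps at exactly the points you flag. For the converse of (1), the step ``the locally divided hypothesis upgrades each $Q_i$ to a strongly primary ideal'' is the entire content of the claim and is not justified: the Choi--Walker fact gives $(\sqrt{I:s})^n\subseteq I:s$, i.e.\ a power of the radical of the whole ideal inside the whole ideal, not a power of $P_i=\sqrt{Q_i}$ inside each component $Q_i$. Worse, even granting strongly primary components, what you have decomposed is $I:s={\rm Sat}_S(I)$, which is in general strictly larger than $I$, so no decomposition of $I$ itself has been produced; your closing sentence concedes this but does not resolve it. The paper sidesteps both problems by working entirely in $R_S$: Theorem \ref{one-to-one cor(locally divided)} yields that every ideal of the locally divided ring $R_S$ is $n$-absorbing and that ${\rm Sat}_S(I)=I:s$, whence $R_S$ is strongly Laskerian by \cite[Lemma 20(2)]{c211}, and \cite[Proposition 3.2]{v22} (strongly $S$-Laskerian is equivalent to $R_S$ strongly Laskerian plus the saturation condition) transfers this back to $R$. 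Similarly, in (2) the implication (c)$\Rightarrow$(a) is asserted rather than proved: ``a single-generator description up to $S$-multiplication, and then gluing'' is precisely the theorem to be established, and strongly $S$-Laskerian does not hand you $S$-finiteness without further work. The paper's actual route is to obtain that every ideal of $R_S$ is $n$-absorbing for some $n$, conclude $R_S$ is Noetherian by \cite[Corollary 36]{c211}, and then apply \cite[Proposition 2(f)]{ad02} together with ${\rm Sat}_S(I)=I:s$ to get $S$-Noetherianity of $R$. Without supplying proofs of, or citations for, these bridging facts, your arguments for the converse of (1) and for (c)$\Rightarrow$(a) do not close.
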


\begin{proof}
(1) Suppose that $R$ is a strongly $S$-Laskerian ring
and let $I$ be an ideal of $R$ disjoint from $S$.
Then $R_{S}$ is a strongly Laskerian ring and
there exists an element $s\in S$ such that ${\rm Sat}_S(I)=I:s$ \cite[Proposition 3.2]{v22}.
Hence $IR_{S}$ is an $n$-absorbing ideal of $R_{S}$
for some $n\in\mathbb{N}$ \cite[Lemma 19]{c211}.
Therefore by Theorem \ref{one-to-one cor(locally divided)},
$I$ is an $S$-$n$-absorbing ideal of $R$.
Hence $\Omega_S(R) \subseteq \mathbb{N}$.
For the remaining argument,
let $R$ be a locally divided ring and
suppose that $\Omega_S(R) \subseteq \mathbb{N}$.
Then $R_{S}$ is a locally divided ring \cite[Proposition 2.1(a)]{bd01}
and for each ideal $I$ of $R$ disjoint from $S$,
$IR_{S}$ is an $n$-absorbing ideal of $R_{S}$ and
${\rm Sat}_S(I)=I:s$ for some $s\in S$ by Theorem \ref{one-to-one cor(locally divided)}.
Hence $R_{S}$ is strongly Laskerian \cite[Lemma 20(2)]{c211},
and thus $R$ is a strongly $S$-Laskerian ring
\cite[Proposition 3.2]{v22}.

(2) Let $R$ be an arithmetical ring.
Note that the implications (a)$\Rightarrow$(b)$\Rightarrow$(c)
follow from (1) and \cite[Corollary 3.3]{v22}.
Suppose that $\Omega_S(R) \subseteq \mathbb{N}$.
Then by Theorem \ref{one-to-one cor(locally divided)},
each ideal of $R_{S}$ is $n$-absorbing for some $n\in\mathbb{N}$,
so $R_{S}$ is Noetherian \cite[Corollary 36]{c211}.
Moreover, given an ideal $I$ of $R$ disjoint form $S$,
we have ${\rm Sat}_S(I)=I:s$ for some $s\in S$ by Theorem \ref{one-to-one cor(locally divided)}.
Thus $R$ is an $S$-Noetherian ring
\cite[Proposition 2(f)]{ad02}.
\end{proof}

\begin{remark}
{\rm
A locally divided strongly $S$-Laskerian ring may not be $S$-Noetherian.
Indeed, let $R=\mathbb{Q}+X\mathbb{R}[X]$, and $S=R\setminus X\mathbb{R}[X]$.
Since $R$ is strongly Laskerian \cite[Corollary 7]{bf82}, it is strongly $S$-Laskerian.
On the other hand, $R_{S}$ is not a Noetherian ring,
so $R$ is not $S$-Noetherian \cite[Proposition 2(f)]{ad02}.
}
\end{remark}

\begin{lemma}\label{saturation}
Let $R$ be a commutative ring with identity,
$S$ a multiplicative subset of $R$ with saturation $\overline{S}$ and
$I$ an ideal of $R$ disjoint from $S$.
Then $I$ is an $S$-$n$-absorbing ideal of $R$ if and only if
$I$ is an $\overline{S}$-$n$-absorbing ideal of $R$.
\end{lemma}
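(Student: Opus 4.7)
The plan is to leverage the fact that every element of $\overline{S}$ divides an element of $S$, so any witness in $\overline{S}$ can be upgraded to a witness in $S$ by a single multiplication. First I would verify that $I$ is automatically disjoint from $\overline{S}$, so that the notion of $\overline{S}$-$n$-absorbing is well-defined for $I$: if some $r$ lay in $I \cap \overline{S}$, then there would exist $r' \in R$ with $rr' \in S$, but $rr' \in I$ since $I$ is an ideal, contradicting $I \cap S = \emptyset$.

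The forward implication is then immediate: since $S \subseteq \overline{S}$, any $s \in S$ witnessing the $S$-$n$-absorbing property automatically lies in $\overline{S}$ and witnesses the $\overline{S}$-$n$-absorbing property.

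For the converse, I would suppose $I$ is $\overline{S}$-$n$-absorbing, associated to some $t \in \overline{S}$, and pick $t' \in R$ with $tt' \in S$ (available from the definition of saturation). Given $r_1,\dots, r_{n+1} \in R$ with $r_1\cdots r_{n+1} \in I$, the hypothesis yields $1 \leq j \leq n+1$ such that $t\prod_{i\neq j} r_i \in I$; multiplying through by $t'$ produces $(tt')\prod_{i\neq j} r_i \in I$ with $tt' \in S$. The key point is that the element $tt'$ depends only on the fixed $t$ and not on the tuple, so a single uniform witness in $S$ serves, and $I$ is $S$-$n$-absorbing associated to $tt'$.

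There is essentially no obstacle here; the argument is the routine ``absorb a divisor into a multiple'' trick, and the only subtlety to monitor is the uniformity of the witness so that the $S$-$n$-absorbing definition is satisfied by a single element of $S$.
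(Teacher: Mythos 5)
Your proposal is correct and follows essentially the same route as the paper: the forward direction from $S\subseteq\overline{S}$, and the converse by multiplying the associated element $t\in\overline{S}$ by some $t'$ with $tt'\in S$ to obtain a uniform witness in $S$. Your additional check that $I\cap\overline{S}=\emptyset$ and your explicit attention to the uniformity of the witness are points the paper glosses over, but they do not change the argument.
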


\begin{proof}
The `only if' part is obvious.
Suppose that $I$ is an $\overline{S}$-$n$-absorbing ideal of $R$.
Let $r_1,\dots,r_{n+1} \in R$ with $r_1 \cdots r_{n+1} \in I$.
Then we may assume that
there exists an element $s \in \overline{S}$ such that
$sr_1 \cdots r_n \in I$.
Since $\overline{S}$ is the saturation of $S$,
there exist an element $t \in \overline{S}$ such that
$st \in S$.
Thus $I$ is an $S$-$n$-absorbing ideal of $R$ since
$str_1 \cdots r_n \in I$ and $st \in S$.
\end{proof}

\begin{theorem}\label{what}
Let $R$ be a commutative ring with identity which is a chained ring and
let $S$ be a multiplicative subset of $R$ with saturation $\overline{S}$.
If $\Omega_S(R) \subseteq \mathbb{N}$,
then either every nonunit regular element of $R$ belongs to $\overline{S}$
or $R$ is a Noetherian ring.
\end{theorem}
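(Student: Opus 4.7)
The plan is to first promote $\Omega_S(R)\subseteq\mathbb{N}$ to the $S$-Noetherian property via Theorem \ref{sum}(2), and then, granted a nonunit regular $a\notin\overline{S}$, to exhibit a non-principal ideal whose existence is incompatible with the $S$-Noetherian property. Since $R$ is chained, it is arithmetical; Theorem \ref{sum}(2) therefore gives that $\Omega_S(R)\subseteq\mathbb{N}$ implies $R$ is $S$-Noetherian. Combined with Lemma \ref{saturation} (and the easy observation that $R$ is $S$-Noetherian if and only if $R$ is $\overline{S}$-Noetherian), I may assume $S=\overline{S}$ is saturated.

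Suppose for a contradiction that there is a nonunit regular $a\notin S$ and that $R$ is not Noetherian. If every element of $S$ were a unit, then $R$ being $S$-Noetherian would already imply Noetherianity (invert $s$ in any $S$-finiteness relation $sI\subseteq J\subseteq I$), contradicting non-Noetherianity. Hence there is a nonunit $s_0\in S$. Because $S$ is saturated and $a\notin S$, the chained property forces $aR\subsetneq s_0R$, and writing $a=s_0r_1$ shows $r_1$ is a nonunit regular element outside $S$. Iterating gives $a\in s_0^n R$ for every $n\geq 1$, so $a$ lies in the ideal $K:=\bigcap_n s_0^n R$. Standard chained-ring manipulations show that $K$ is a prime ideal and that $s_0K=K$, and these two facts together with the presence of the regular element $a\in K$ force $K$ to be non-principal: were $K=bR$, the regularity of $b$ (inherited from $a$) would cancel in an equality $s_0b=bu$ (coming from $s_0K=K$), making $s_0$ a unit.

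The heart of the argument is to apply $S$-Noetherianity to the non-principal ideal $K$: pick $s\in S$ and $b\in R$ with $sK\subseteq bR\subsetneq K$. The crucial auxiliary claim is that $K\cap S=\emptyset$; my plan for this is to show that an element $t\in K\cap S$ would, via saturation, produce a strictly ascending chain $t_1R\subsetneq t_2R\subsetneq\cdots$ of principal ideals lying entirely inside $S$, and then that applying $S$-Noetherianity to $\bigcup_n t_nR$, combined with the Noetherianity of $R_S$ (a consequence of $R$ being $S$-Noetherian), produces a contradiction. This is the main obstacle, and it is the step that requires the most care. Granting $K\cap S=\emptyset$, one has $s\notin K$, so by the chained structure $s_0^m\in sR$ for some $m$, whence $K\subseteq sR$. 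Writing $b=sb'$ and cancelling the regular element $s$ (which is regular by the same argument used for $s_0$) yields $K\subseteq b'R$, and non-principality of $K$ then forces $b'\notin K$. Applying the chained argument once more, $b'\notin K$ gives $b'\mid s_0^m$ for some $m$, so $b'\in S$ by saturation; but then $b=sb'\in S$, contradicting $b\in K$ and $K\cap S=\emptyset$. This completes the derivation of the contradiction.
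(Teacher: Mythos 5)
Your reduction to the $S$-Noetherian property via Theorem \ref{sum}(2) and Lemma \ref{saturation} matches the paper, and the elementary manipulations (regularity of $s_0$ because it divides $a$, the strict descent $aR\subsetneq s_0R$, primeness of $K=\bigcap_n s_0^nR$, $s_0K=K$, non-principality of $K$) all check out. But the proof has a genuine gap exactly where you flag it: the claim $K\cap S=\emptyset$ is never established, and everything downstream ($s\notin K$, hence $K\subseteq sR$, hence the final contradiction $b=sb'\in S$ versus $b\in K$) depends on it. The claim is not a formality: without the $S$-Noetherian hypothesis it is simply false. For instance, in a valuation domain $V$ with value group $\mathbb{Z}^4$ ordered lexicographically, taking $S=V\setminus P$ for the height-one prime $P$, $s_0$ of value $(0,0,1,0)$ and $a$ of value $(1,0,0,0)$, one gets $a\notin S$ nonunit, $s_0\in S$ nonunit, yet $K=\bigcap_n s_0^nV$ is the height-two prime, which meets $S$. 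So any proof of $K\cap S=\emptyset$ must invoke $S$-Noetherianity essentially, and your sketched route does not do the job: the union $L=\bigcup_n t_nR$ of an ascending chain of principal ideals generated by elements of $S$ is automatically $S$-finite whenever some $t\in S\cap L$ exists (then $tL\subseteq t_1R\subseteq L$... in fact $tL\subseteq tR\subseteq L$), so applying $S$-Noetherianity to $L$ yields no contradiction; moreover the chain need not even be strictly ascending, since the $t_n$ need not be regular. As written, the argument is circular at its load-bearing step.

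The paper avoids this entirely by choosing the prime differently: since $\overline{S}$ is saturated, its complement is a union of prime ideals, so one may pick a prime $P$ containing the nonunit regular element $a$ with $P\cap\overline{S}=\emptyset$ \emph{from the outset}, rather than constructing $\bigcap_n s_0^nR$ and then trying to prove disjointness. For such a $P$ and any $s\in\overline{S}$ one has $sP=P$ (Gilmer, Exercise 17.3, since $s\notin P$ in a chained ring), so the $\overline{S}$-finiteness relation $sP\subseteq J\subseteq P$ forces $P=J$ to be finitely generated; a finitely generated regular prime of a chained ring is the maximal ideal, whence $\overline{S}$ consists of units and $\overline{S}$-Noetherian collapses to Noetherian. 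If you replace your $K$ by this $P$, the disjointness is free and most of your auxiliary work (non-principality, the colon manipulations with $b$ and $b'$) becomes unnecessary.
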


\begin{proof}
Suppose that there exists a nonunit regular element $r$ of $R$ which does not belong to $\overline{S}$.
Then we can choose a prime ideal $P$ of $R$ disjoint from $\overline{S}$ which contains $r$;
that is, $P$ is a regular prime ideal of $R$ disjoint from $\overline{S}$.
Note that by Proposition \ref{sum}(2) and Lemma \ref{saturation},
$R$ is $\overline{S}$-Noetherian.
Hence there exist an element $s\in \overline{S}$
and a finitely generated ideal $J$ of $R$
such that $sP\subseteq J\subseteq P$.
Since $sP=P$ \cite[Exercise 17.3]{G},
$P$ is finitely generated and must be the maximal ideal of $R$ \cite[Exercise 17.3]{G}.
Then every element of $\overline{S}$ is a unit in $R$
since $R$ is a quasi-local ring and $\overline{S}$ is disjoint from $P$.
Thus $R$ is a Noetherian ring.
\end{proof}

\begin{corollary}
Let $R$ be a valuation domain and
let $S$ be a multiplicative subset of $R$ with
saturation $\overline{S}$.
Then the following are equivalent.
\begin{enumerate}
\item[(1)] $\Omega_S(R) \subseteq \mathbb{N}$.
\item[(2)] Either $R_{S}$ is a field or $R$ is a Noetherian ring.
\end{enumerate}
\end{corollary}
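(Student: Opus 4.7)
The plan is to deduce the forward direction directly from Theorem \ref{what} and then handle the two cases in the reverse direction separately.

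For $(1)\Rightarrow(2)$, note that a valuation domain is a chained ring, so Theorem \ref{what} applies: either every nonunit regular element of $R$ lies in $\overline{S}$ or $R$ is Noetherian. Since $R$ is a domain, every nonzero element is regular, so the first alternative reads "every nonzero nonunit of $R$ lies in $\overline{S}$." I would then verify the standard fact that an element $r\in R$ becomes a unit in $R_S$ if and only if $r\in \overline{S}$ (one direction: if $rr'/s' = 1$ in $R_S$, then $trr' = ts'$ for some $t\in S$, so $r$ divides an element of $S$; conversely if $r$ divides $s\in S$ then $r$ is clearly a unit in $R_S$). Thus the first alternative is equivalent to saying $R_S$ is a field, which gives (2).

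For $(2)\Rightarrow(1)$, I would split into two cases. If $R_S$ is a field, then for any nonzero ideal $I$ of $R$ and any nonzero $i\in I$, by the equivalence above $i\in \overline{S}$, so $i$ divides some $s\in S$, forcing $s\in I$, contradicting $I\cap S=\emptyset$. Hence the zero ideal is the only ideal of $R$ disjoint from $S$, and since $R$ is a domain, $(0)$ is prime and therefore $S$-$1$-absorbing, giving $\Omega_S(R)\subseteq\{1\}$. If instead $R$ is Noetherian, then $R$ is strongly Laskerian (hence strongly $S$-Laskerian) because every primary ideal in a Noetherian ring contains a power of its radical; by Theorem \ref{sum}(1) we conclude $\Omega_S(R)\subseteq\mathbb{N}$. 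Alternatively, since a valuation domain is arithmetical, Theorem \ref{sum}(2) gives the same conclusion from $R$ being $S$-Noetherian.

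The argument is essentially a packaging of earlier results, so there is no serious obstacle; the only point requiring care is the translation between "every nonzero nonunit belongs to $\overline{S}$" and "$R_S$ is a field," which I would state explicitly to bridge the hypothesis of Theorem \ref{what} and the formulation in (2).
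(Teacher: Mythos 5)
Your proof is correct, and the forward direction is exactly the paper's: apply Theorem \ref{what} to the chained ring $R$ and translate ``every nonzero nonunit lies in $\overline{S}$'' into ``$R_S$ is a field'' (a translation the paper leaves implicit but which you rightly spell out). The reverse direction is packaged slightly differently: the paper asserts that in either case $R$ is $S$-Noetherian and then invokes Theorem \ref{sum}(2) via the fact that a valuation domain is arithmetical, whereas you treat the field case by the more elementary observation that $(0)$ is then the only ideal disjoint from $S$ and is $S$-$1$-absorbing, and you treat the Noetherian case through strong ($S$-)Laskerianity and Theorem \ref{sum}(1). Your route has the small advantage of not needing the unproved claim that ``$R_S$ a field implies $R$ is $S$-Noetherian'' (which is true but requires the same saturation argument you already made), while the paper's route is more uniform; since you also note the arithmetical/$S$-Noetherian alternative, the two arguments are essentially interchangeable and both are complete.
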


\begin{proof}
Suppose that (1) holds.
If every nonzero nonunit belongs to $\overline{S}$,
then $R_{S}$ is a field.
If some nonzero nonunit does not belong to $\overline{S}$,
then $R$ is Noetherian by Proposition \ref{what}.
Conversely, if $R_{S}$ is a field or $R$ is Noetherian,
then $R$ is $S$-Noetherian, and the conclusion follows from Proposition \ref{sum}.
\end{proof}

\section{$S$-$n$-absorbing ideals of amalgamation}\label{sec 5}

Let $A,B$ be commutative rings with identity,
$J$ an ideal of $B$ and
$f : A \to B$ a homomorphism.
Then we can define the following subring of $A \times B$:
\begin{center}
$A \bowtie^f J = \{(a,f(a)+j) \,|\, a \in A, j \in J\}$.
\end{center}
$A \bowtie^f J$ is called the {\it amalgamation of $A$ with $B$ along $J$ with respect to $f$}.
The readers can refer to \cite{Anna, Anna1, Issoual}
for amalgamated algebras.
This section begins by examining the conditions under which
the well-known ideals of $A \bowtie^f J$
become $S$-$n$-absorbing ideals.

Let $I$ be an ideal of $A$,
$S$ a multiplicative subset of $A$ and
$K$ an ideal of $f(A)+J$.
Throughout this section,
we set
\begin{center}
$S^{{\bowtie}^f} := \{(s,f(s)) \,|\, s \in S\}$,\\
$I \bowtie^f J := \{(i,f(i) + j) \,|\, i \in I, j \in J\}$,\\
$\overline{K}^f := \{(a,f(a)+j) \,|\, a \in A,j \in J, f(a) +j \in K\}$
\end{center}
and
\begin{center}
$\overline{I \times K}^f := \{(i,f(i) +j) \,|\, i \in I , j \in J, f(i) + j \in K\}$.
\end{center}
Then clearly, $S^{{\bowtie}^f}$ is a multiplicative subset of $A \bowtie^f J$; and
$I \bowtie^f J, \overline{K}^f$ and $\overline{I \times K}^f$
are ideals of $A \bowtie^f J$.

\begin{lemma}
Let $A, B$ be commutative rings with identity,
$S$ a multiplicative subset of $A$ and
$f : A \to B$ a homomorphism.
Let $I$ be an ideal of $A$,
$J$ an ideal of $B$ and
$K$ an ideal of $f(A) + J$.
Then the following assertions hold.
\begin{enumerate}
\item[(1)]
$I \cap S = \emptyset$ if and only if
$(I \bowtie^f J) \cap S^{{\bowtie}^f} = \emptyset$.
\item[(2)]
$K \cap f(S) = \emptyset$ if and only if
$\overline{K}^f \cap S^{{\bowtie}^f} = \emptyset$.
\item[(3)]
If $I \cap S = \emptyset$ and $K \cap f(S) = \emptyset$, then
$\overline{I \times K}^f \cap S^{{\bowtie}^f} = \emptyset$.
\end{enumerate}
\end{lemma}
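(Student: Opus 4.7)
The plan is to dispatch all three assertions by a single bookkeeping principle: every element of $S^{{\bowtie}^f}$ has the rigid form $(s, f(s))$ with $s \in S$, so the intersection of $S^{{\bowtie}^f}$ with any of the three ideals $I \bowtie^f J$, $\overline{K}^f$, $\overline{I \times K}^f$ collapses into the task of equating first coordinates and then reading off what the second-coordinate condition actually requires. Since these are set-theoretic biconditionals rather than ideal-theoretic statements, no algebraic manipulation beyond unwinding the defining sets will be needed.

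For (1), I would produce each direction by exhibiting or chasing a witness. For the backward direction, if $s \in I \cap S$, then writing $(s, f(s)) = (s, f(s) + 0)$ with $0 \in J$ places the pair in both $I \bowtie^f J$ and $S^{{\bowtie}^f}$. For the forward direction, any element in the intersection has the form $(i, f(i) + j) = (s, f(s))$ with $i \in I$, $j \in J$, $s \in S$; equating first coordinates yields $s = i \in I \cap S$.

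For (2), I would run the analogous argument. If $f(s) \in K \cap f(S)$ for some $s \in S$, then choosing $a = s$ and $j = 0$ in the description of $\overline{K}^f$ exhibits $(s, f(s))$ in the intersection, since $f(a) + j = f(s) \in K$. Conversely, an element $(a, f(a) + j) = (s, f(s))$ of the intersection with $f(a) + j \in K$ forces $a = s$ by first-coordinate comparison, whence $f(s) = f(a) + j \in K \cap f(S)$.

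For (3), the inclusion $\overline{I \times K}^f \subseteq I \bowtie^f J$ is immediate from the definitions, so the statement reduces directly to (1): the hypothesis $I \cap S = \emptyset$ already forces $(I \bowtie^f J) \cap S^{{\bowtie}^f} = \emptyset$ by (1), and emptiness is inherited by any subset. The extra hypothesis $K \cap f(S) = \emptyset$ plays no role in this reduction, but is consistent with the statement and could alternatively be invoked through (2) applied to $\overline{K}^f \supseteq \overline{I \times K}^f$. There is no substantive obstacle here; the only care required is to stay disciplined about which coordinate is being inspected at each step, since the first coordinate pins down $s$ uniquely while the second coordinate carries the ideal-membership condition for $K$.
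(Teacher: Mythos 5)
Your proof is correct and is exactly the routine definition-unwinding that the paper has in mind (its own ``proof'' is just the remark that the results are straightforward): matching first coordinates of $(i,f(i)+j)=(s,f(s))$ pins down $s=i$, and the reverse directions follow by taking $j=0$. Your observation that (3) already follows from (1) alone via the inclusion $\overline{I\times K}^f\subseteq I\bowtie^f J$ (with the hypothesis on $K$ redundant) is a valid, slightly sharper reading than the stated lemma.
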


\begin{proof}
The results are straightforward.
\end{proof}

\begin{theorem}\label{basic amalgamation}
Let $A, B$ be commutative rings with identity,
$S$ a multiplicative subset of $A$ and
$f : A \to B$ a homomorphism.
Let $I$ be an ideal of $A$ disjoint from $S$,
$J$ an ideal of $B$ and
$K$ an ideal of $f(A) + J$ disjoint from $f(S)$.
Then the following assertions hold.
\begin{enumerate}
\item[(1)]
$I$ is an $S$-$n$-absorbing ideal  if and only if
$I \bowtie^f J$ is an $S^{{\bowtie}^f}$-$n$-absorbing ideal of $A \bowtie^f J$.
\item[(2)]
$K$ is an $f(S)$-$n$-absorbing ideal of $f(A) + J$ if and only if
$\overline{K}^f$ is an $S^{{\bowtie}^f}$-$n$-absorbing ideal of $A \bowtie^f J$.
\item[(3)]
If $I$ is an $S$-$m$-absorbing ideal of $A$ and
$K$ is an $f(S)$-$n$-absorbing ideal of $f(A)+J$,
then $\overline{I \times K}^f$ is an
$S^{{\bowtie}^f}$-$(m+n)$-absorbing ideal of $A \bowtie^f J$.
\end{enumerate}
\end{theorem}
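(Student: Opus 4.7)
The plan is to prove all three parts by coordinatewise computation in $A \bowtie^f J$, exploiting a single observation: if $(a_i, f(a_i)+j_i) \in A \bowtie^f J$ for $i = 1, \dots, k$, then their product has first coordinate $a_1 \cdots a_k$ and second coordinate of the form $f(a_1 \cdots a_k) + j'$ for some $j' \in J$, because $J$ is an ideal of $B$ and therefore absorbs the cross-terms. This lets me move freely between conditions on the first coordinate, on the second coordinate, and on membership in the various ideals $I \bowtie^f J$, $\overline{K}^f$, $\overline{I \times K}^f$.

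For part (1), the forward direction takes $n+1$ elements of $A \bowtie^f J$ whose product lies in $I \bowtie^f J$, reads off first coordinates to apply the $S$-$n$-absorbing property of $I$ with witness $s$, and then multiplies the amalgamated product by $(s, f(s)) \in S^{\bowtie^f}$, omitting the indicated factor; the observation above ensures the result lies in $I \bowtie^f J$. For the converse, I lift $n+1$ elements $a_1, \dots, a_{n+1} \in A$ with product in $I$ to $(a_i, f(a_i)) \in A \bowtie^f J$, whose product lies in $I \bowtie^f J$, and read off first coordinates after applying the $S^{\bowtie^f}$-$n$-absorbing hypothesis, whose witness necessarily has the form $(s_0, f(s_0))$ with $s_0 \in S$.

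Part (2) is the mirror image with the second coordinate playing the primary role: given $n+1$ elements of $A \bowtie^f J$ with product in $\overline{K}^f$, the second-coordinate product lies in $K$, so the $f(S)$-$n$-absorbing property of $K$ supplies the required index, and assembling the surviving first coordinate with the induced $j' \in J$ places the truncated product in $\overline{K}^f$. The converse proceeds by writing each $\alpha_i \in f(A) + J$ as $f(a_i) + j_i$ and lifting to $(a_i, \alpha_i) \in A \bowtie^f J$, whose product automatically lies in $\overline{K}^f$.

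Part (3) is the main technical step, and here the core obstacle is combinatorial. By Proposition \ref{colon}, I may choose $s_I \in S$ and $f(s_K) \in f(S)$ such that $I : s_I$ is $m$-absorbing in $A$ and $K : f(s_K)$ is $n$-absorbing in $f(A) + J$. Given $m+n+1$ elements $(a_i, \alpha_i)$ with product in $\overline{I \times K}^f$, I invoke the standard consequence of the $k$-absorbing definition that a product of $N \geq k+1$ elements lying in a $k$-absorbing ideal admits a subproduct of size $k$ in the ideal; applied separately to the two coordinates this yields $T_1, T_2 \subseteq \{1, \dots, m+n+1\}$ with $|T_1| = m$, $|T_2| = n$, $s_I \prod_{i \in T_1} a_i \in I$, and $f(s_K) \prod_{i \in T_2} \alpha_i \in K$. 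Since $|T_1 \cup T_2| \leq m + n < m + n + 1$, there is an index $\ell \notin T_1 \cup T_2$; letting $T$ be the complement of $\{\ell\}$, the product $(s_I s_K, f(s_I s_K)) \prod_{i \in T}(a_i, \alpha_i)$ has first coordinate in $I$ (multiplying $s_I \prod_{T_1} a_i$ by further factors) and second coordinate in $K$, and the observation from the first paragraph places it in $\overline{I \times K}^f$. The hard part is precisely ensuring that $T_1 \cup T_2$ does not exhaust the entire index set, which works only because $m + n < m + n + 1$ leaves exactly one index to discard.
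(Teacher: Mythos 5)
Your argument is correct, but it takes a genuinely different route from the paper's. The paper proves (1) by transporting the problem through the isomorphism $A \cong (A\bowtie^f J)/(\{0\}\bowtie^f J)$ using Proposition \ref{homo cor}(1) and Corollary \ref{factor cor}, proves (2) via the projection $A\bowtie^f J \to f(A)+J$ with kernel $f^{-1}(J)\times\{0\}$ and Proposition \ref{homo cor}(1) again, and proves (3) by first showing $I\times K$ is $(S\times f(S))$-$(m+n)$-absorbing in the product ring $A\times(f(A)+J)$ (Corollary \ref{product S-n-absorbing}), adjusting the witness $(s_1,f(s_2))$ to the diagonal element $(s_1s_2,f(s_1s_2))$, and then contracting to the subring via Proposition \ref{basic property}(2) and the identity $\overline{I\times K}^f=(I\times K)\cap(A\bowtie^f J)$. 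You instead work entirely inside $A\bowtie^f J$ by coordinatewise computation, and for (3) you re-derive by hand the union-of-index-sets combinatorics that the paper has already packaged into Proposition \ref{product omega}: your appeal to Proposition \ref{colon} plus the Anderson--Badawi subproduct property (a product of more than $k$ elements in a $k$-absorbing ideal has a subproduct of $k$ factors in the ideal) is exactly the justification the paper's Proposition \ref{product omega} uses implicitly, so your version is if anything more explicit about where the uniform witness $(s_Is_K,f(s_Is_K))$ comes from. The paper's proof is shorter given the machinery already established in Sections \ref{sec 2} and \ref{sec 4}; yours is self-contained, avoids the detour through the product ring and the contraction lemma, and makes transparent why one index can always be discarded ($|T_1\cup T_2|\le m+n<m+n+1$). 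One small point worth stating explicitly in your write-up: membership of an element of $A\bowtie^f J$ in $I\bowtie^f J$ (respectively $\overline{K}^f$, $\overline{I\times K}^f$) is detected by the first coordinate lying in $I$ (respectively the second in $K$, respectively both), which is what licenses all of your "read off a coordinate" steps; this is immediate from the definitions but is the load-bearing observation.
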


\begin{proof}
(1) Define $\varphi : A \to (A \bowtie^f J)/(\{0\} \bowtie^f J)$ by
$\varphi(a) = (a,f(a)) + (\{0\} \bowtie^f J)$.
Then $\varphi$ is an isomorphism and
$\varphi(S) = S^{{\bowtie}^f}/(\{0\} \bowtie^f J)$.
Hence by Proposition \ref{homo cor}(1),
$I$ is an $S$-$n$-absorbing ideal of $A$ if and only if
$(I \bowtie^f J)/(\{0\} \bowtie^f J)$ is a $\varphi(S)$-$n$-absorbing ideal of $(A \bowtie^f J)/(\{0\} \bowtie^f J)$.
Also, by Corollary \ref{factor cor},
$(I \bowtie^f J)/(\{0\} \bowtie^f J)$ is an $\varphi(S)$-$n$-absorbing ideal of $(A \bowtie^f J)/(\{0\} \bowtie^f J)$
if and only if
$I \bowtie^f J$ is an $S^{{\bowtie}^f}$-$n$-absorbing ideal of $A \bowtie^f J$.
Thus the result holds.

(2) Define $\varphi : A \bowtie^f J \to f(A) + J$ by
$\varphi(a,f(a) + j) = f(a) + j$.
Then $\varphi$ is an epimorphism with $\ker(\varphi) = f^{-1}(J) \times \{0\}$,
$\varphi(\overline{K}^f) = K$ and $\varphi(S^{{\bowtie}^f}) = f(S)$.
Since $f^{-1}(J) \times \{0\} \subseteq \overline{K}^f$,
we obtain that
$K$ is an $f(S)$-$n$-absorbing ideal of $f(A) + J$ if and only if
$\overline{K}^f$ is an $S^{{\bowtie}^f}$-$n$-absorbing ideal of $A \bowtie^f J$
by Proposition \ref{homo cor}(1).

(3) By Corollary \ref{product S-n-absorbing},
$I \times K$ is an $(S \times f(S))$-$(m+n)$-absorbing ideal of $A \times (f(A) + J)$.
Let $a_1,\dots,a_{m+n+1} \in A \times (f(A) + J)$ with
$a_1 \cdots a_{m+n+1} \in I \times K$.
Then we may assume that
there exist elements $s_1,s_2 \in S$ such that
$(s_1,f(s_2))a_1 \cdots a_{m+n} \in I \times K$.
Hence $(s_1s_2, f(s_1s_2)) a_1 \cdots a_{m+n} \in I \times K$.
Therefore $I \times K$ is an $S^{{\bowtie}^f}$-$(m+n)$-absorbing ideal of $A \times (f(A) + J)$.
Since $A \times (f(A) + J)$ is a ring extension of $A \bowtie^f J$ and
$\overline{I \times K}^f= (I\times K) \cap (A \bowtie^f J)$,
$\overline{I \times K}^f$ is an $S^{{\bowtie}^f}$-$(m+n)$-absorbing ideal of $A \bowtie^f J$
by Proposition \ref{basic property}(2).
\end{proof}

By Theorem \ref{basic amalgamation},
we obtain following example and some results.

\begin{example}
{\rm
Let $A,B$ be commutative rings with identity such that $A \subseteq B$ and
let $S$ be a multiplicative subset of $A$.
Set $J_1 = XB[X]$ and $J_2 = XB[\![X]\!]$.

(1) Consider the natural embedding $\iota_1 : A \to B[X]$.
Then it is easy to check that $A \bowtie^{\iota_1} J_1$ is isomorphic to $A + XB[X]$.
Hence by Theorem \ref{basic amalgamation}(1),
$I$ is an $S$-$n$-absorbing ideal of $A$ if and only if
$I+XB[X]$ is an $S$-$n$-absorbing ideal of $A+XB[X]$.

(2) Consider the natural embedding $\iota_2 : A \to B[\![X]\!]$.
Then it is easy to check that $A \bowtie^{\iota_2} J_2$ is isomorphic to $A + XB[\![X]\!]$.
Hence by Theorem \ref{basic amalgamation}(1),
$I$ is an $S$-$n$-absorbing ideal of $A$ if and only if
$I+XB[\![X]\!]$ is an $S$-$n$-absorbing ideal of $A+XB[\![X]\!]$.
}
\end{example}

\begin{corollary}
Let $A, B$ be commutative rings with identity,
$S$ a multiplicative subset of $A$ and
$f : A \to B$ a homomorphism.
Let $I$ be an ideal of $A$ disjoint from $S$,
$J$ an ideal of $B$ and
$K$ an ideal of $f(A) + J$ disjoint from $f(S)$.
Then the following assertions hold.
\begin{enumerate}
\item[(1)]
Every $S^{{\bowtie}^f}$-$n$-absorbing ideal of $A \bowtie^f J$ containing $\{0\} \times J$
is of the form
$I \bowtie^f J$, where $I$ is an $S$-$n$-absorbing ideal of $A$.
\item[(2)]
Every $S^{{\bowtie}^f}$-$n$-absorbing ideal of $A \bowtie^f J$ containing $f^{-1}(J) \times \{0\}$
is of the form
$\overline{K}^f$, where $K$ is an $f(S)$-$n$-absorbing ideal of $f(A) + J$.
\end{enumerate}
\end{corollary}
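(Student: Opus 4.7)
The plan is to reduce both statements to Theorem \ref{basic amalgamation} by identifying, for each candidate ideal, the ``correct'' $I$ or $K$ using the quotient maps already introduced in the proof of that theorem. Everything is formal once the ideals are constructed; the real work is simply showing that an ideal of $A \bowtie^f J$ large enough to contain $\{0\}\bowtie^f J$ (resp. $f^{-1}(J)\times\{0\}$) is necessarily pulled back from $A$ (resp. pushed forward to $f(A)+J$).

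For (1), let $\mathfrak{I}$ be an $S^{\bowtie^f}$-$n$-absorbing ideal of $A \bowtie^f J$ with $\{0\}\bowtie^f J \subseteq \mathfrak{I}$. Define
\[
 I := \{a \in A : (a, f(a)) \in \mathfrak{I}\},
\]
which is clearly an ideal of $A$. The containment $\{0\}\bowtie^f J\subseteq \mathfrak{I}$ lets me move freely between $(a,f(a))$ and $(a,f(a)+j)$: if $(a,f(a)+j)\in\mathfrak{I}$, then subtracting $(0,j)\in\mathfrak{I}$ gives $(a,f(a))\in\mathfrak{I}$, and the converse direction uses the same addition. This identification proves $\mathfrak{I} = I\bowtie^f J$. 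Since $\mathfrak{I}\cap S^{\bowtie^f}=\emptyset$, the same identification yields $I\cap S=\emptyset$, so Theorem \ref{basic amalgamation}(1) applies and $I$ is an $S$-$n$-absorbing ideal of $A$.

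For (2), let $\mathfrak{I}$ be an $S^{\bowtie^f}$-$n$-absorbing ideal of $A\bowtie^f J$ containing $f^{-1}(J)\times\{0\}$. Using the epimorphism $\varphi:A\bowtie^f J \to f(A)+J$, $\varphi(a,f(a)+j)=f(a)+j$, with $\ker(\varphi)=f^{-1}(J)\times\{0\}$, set $K := \varphi(\mathfrak{I})$. Since $\ker(\varphi)\subseteq\mathfrak{I}$, we have $\mathfrak{I}=\varphi^{-1}(K)$, and unwinding the definition of $\varphi^{-1}(K)$ gives exactly $\overline{K}^f$. Disjointness $\mathfrak{I}\cap S^{\bowtie^f}=\emptyset$ transfers to $K\cap f(S)=\emptyset$, and then Theorem \ref{basic amalgamation}(2) delivers that $K$ is an $f(S)$-$n$-absorbing ideal of $f(A)+J$.

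The only non-trivial step is the explicit identification of $\mathfrak{I}$ as $I\bowtie^f J$ or $\overline{K}^f$, which I expect to handle in one or two lines using the containment hypothesis; once that is done, the $S$-$n$-absorbing conclusion is a direct invocation of the corresponding parts of Theorem \ref{basic amalgamation}, so no new absorbing-argument is required.
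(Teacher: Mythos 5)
Your proposal is correct and follows essentially the same route as the paper: identify the given ideal as $I\bowtie^f J$ (resp. $\overline{K}^f$) using the containment hypothesis and the projection onto $A$ (resp. the epimorphism onto $f(A)+J$), then transfer the $S$-$n$-absorbing property via the already-established correspondence. The only cosmetic difference is that you invoke the ``if and only if'' of Theorem \ref{basic amalgamation} where the paper cites Proposition \ref{homo cor}(1) directly, but these are interchangeable since the theorem is itself derived from that proposition.
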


\begin{proof}
Let $L$ be an ideal of $A \bowtie^f J$ disjoint from $S^{{\bowtie}^f}$.
Define $\pi_1 : A \bowtie^f J \to A$ by $\pi_1(a,f(a)+j) = a$ and
define $\pi_2 : A \bowtie^f J \to f(A) + J$ by $\pi_2(a,f(a)+j) = f(a)+j$.
Then $\pi_1$ and $\pi_2$ are epimorphisms.

(1) By Theorem \ref{basic amalgamation}(1),
$I \bowtie^f J$ is an $S^{{\bowtie}^f}$-$n$-absorbing ideal of $A \bowtie^f J$
for any $S$-$n$-absorbing ideal $I$ of $A$.
Suppose that $L$ is an $S^{{\bowtie}^f}$-$n$-absorbing ideal of $A \bowtie^f J$ containing $\{0\} \times J$.
It is easy to show that $L = \pi_1(L) \bowtie^f J$.
Since $\ker(\pi_1) = \{0\} \times J$, we obtain that
$\pi_1(L)$ is an $S$-$n$-absorbing ideal of $A$ by Proposition \ref{homo cor}(1).
Thus the result holds.

(2) By Theorem \ref{basic amalgamation}(2),
$\overline{K}^f$ is an $S^{\bowtie^f}$-$n$-absorbing ideal of $A \bowtie^f J$
for any $f(S)$-$n$-absorbing ideal $K$ of $f(A) +J$.
Suppose that $L$ is an $S^{{\bowtie}^f}$-$n$-absorbing ideal of $A \bowtie^f J$ containing $f^{-1}(J) \times \{0\}$.
It is easy to show that $L = \overline{\pi_2(L)}^f$.
Since $\ker(\pi_2) = f^{-1}(J) \times \{0\}$, we obtain that
$\pi_2(L)$ is an $f(S)$-$n$-absorbing ideal of $f(A) + J$ by Proposition \ref{homo cor}(1).
Thus the result holds.
\end{proof}

Let $D$ and $T$ be integral domains with $D \subseteq T$,
let $\{M_{\alpha} \,|\, \alpha \in \Lambda\}$ a subset of ${\rm Max}(T)$ and
$J = \bigcap_{\alpha \in \Lambda}M_{\alpha}$ with $J \cap D = (0)$.
Consider the natural embedding $\iota : D \to T$.
Then it is easy to show that $D+J$ is canonically isomorphic to $D \bowtie^{\iota}J$.
Under these notations,
we obtain the following result directly from Theorem \ref{basic amalgamation}(1).

\begin{corollary}
Let $S$ be a multiplicative subset of $D$,
$I$ an ideal of $D$ disjoint from $S$ and
$J$ as above.
Then $I$ is an $S$-$n$-absorbing ideal of $D$ if and only if
$I+J$ is an $S^{\bowtie^{\iota}}$-$n$-absorbing ideal of $D+J$.
\end{corollary}

Let $A,B$ be commutative rings with identity,
$J$ an ideal of $B$ and
$f : A \to B$ a homomorphism.
Consider the ideals $I$ and $H$ of $A$ and $f(A) + J$, respectively,
such that $f(I)J \subseteq H \subseteq J$.
Obviously, $I \bowtie^f H : = \{(i,f(i) + h \,|\, i \in I {\rm \ and \ } h \in H \}$
is an ideal of $A \bowtie^f J$.
Note that $I \bowtie^f H$ is generally not an $n$-absorbing ideal of $A \bowtie^f J$
when $I$ is an $n$-absorbing ideal of $A$ \cite[Example 2.7]{Issoual}.
Similarly, we attach the example that shows that
$I \bowtie^f H$ is generally not an $S^{\bowtie^f}$-$n$-absorbing ideal of $A \bowtie^f J$
when $I$ is an $S$-$n$-absorbing ideal of $A$.
We mimic that of \cite[Example 2.7]{Issoual}, but the proof is included for the sake of completeness.

\begin{example}{\rm (\cite[Example 2.7]{Issoual})}
{\rm
Let $A = \mathbb{Z}$, $B = \mathbb{Q}[\![X]\!]$ and
$f : A \to B$ be the natural embedding.
Set $J = X\mathbb{Q}[\![X]\!]$, $I = (0)$ and
$H = \{Xg \,|\, g \in f(A) + J \}$.
Let $S$ be a multiplicative subset of $A$ disjoint from $I$ with $\overline{S} \neq \mathbb{Z} \setminus \{0\}$.
Since $I$ is a prime ideal of $A$,
it is an $S$-$n$-absorbing ideal of $A$.
We claim that $I \bowtie^f H$ is not an $S^{\bowtie^f}$-$n$-absorbing ideal of $A \bowtie^f J$.
Let $a \in \mathbb{Z} \setminus \overline{S} \cup \{0\}$.
Then $(a,a)^n(0, \frac{X}{a^n}) = (0,X) \in I \bowtie^f H$.
It is clear that $(s,f(s))(a,a)^n \notin I \bowtie^f H$ for all $s \in S$.
Now, suppose that there exists $s \in S$ such that
$(0, \frac{s}{a}X) = (s,f(s))(a,a)^{n-1}(0, \frac{X}{a^n}) \in I \bowtie^f H$.
This implies that $\frac{s}{a} \in \mathbb{Z}$.
Hence there exists $b \in \mathbb{Z}$ such that $ab = s \in S \subseteq \overline{S}$,
which means that $a,b \in \overline{S}$.
This contradicts our choice of $a$.
Therefore $(s,f(s))(a,a)^{n-1}(0, \frac{X}{a^n}) \notin I \bowtie^f H$ for all $s \in S$.
Thus $I \bowtie^f H$ is not an $S^{\bowtie^f}$-$n$-absorbing ideal of $A \bowtie^f J$.
}
\end{example}

Now, we investigate the conditions for $I \bowtie^f H$
to be an $S^{\bowtie^f}$-$n$-absorbing ideal of $A \bowtie^f J$.
To do this, we need a couple of lemmas.

\begin{lemma}\label{P-primary lemma}
Let $R$ be a commutative ring $($not necessary with identity$)$,
$I$ an ideal of $R$ and
$S$ a multiplicative subset of $R$ disjoint from $I$.
If $P$ is a prime ideal of $R$ and
$I$ is a $P$-primary ideal of $R$ such that $sP^n \subseteq I$ for some $s \in S$,
then $I$ is an $S$-$n$-absorbing ideal of $R$.
\end{lemma}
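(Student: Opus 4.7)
The plan is to verify the $S$-$n$-absorbing condition directly, with the element $s$ supplied by the hypothesis serving as the associated element. I fix arbitrary $r_1,\dots,r_{n+1} \in R$ with $r_1\cdots r_{n+1} \in I$ and split into two cases depending on whether some $r_i$ lies outside $P$. In both cases the chosen index will be $j = n+1$.

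Suppose first that some $r_i \notin P$; by commutativity of $R$ I may relabel so that $r_{n+1} \notin P$. Writing $r_1\cdots r_{n+1} = (r_1 \cdots r_n)\, r_{n+1} \in I$ and invoking that $I$ is $P$-primary with $\sqrt{I} = P$, the primary condition together with $r_{n+1} \notin \sqrt{I}$ forces $r_1\cdots r_n \in I$; multiplying by $s$ gives $s\, r_1\cdots r_n \in I$. Otherwise every $r_i$ lies in $P$, hence $r_1\cdots r_n \in P^n$, and the hypothesis $sP^n \subseteq I$ gives $s\, r_1\cdots r_n \in I$. In either case the required membership holds, so $I$ is an $S$-$n$-absorbing ideal of $R$ associated to $s$.

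I do not anticipate a substantive obstacle. The argument only invokes the element-wise primary condition and the containment $sP^n \subseteq I$, both of which make sense in the non-unital setting in which the lemma is formulated, and the disjointness $I \cap S = \emptyset$ is already built into the hypotheses. The one small point worth flagging is the initial reduction: applying the primary condition in the form ``if $ab \in I$ and $b \notin \sqrt{I}$, then $a \in I$'' uses commutativity of $R$ to exchange the roles of $a$ and $b$ in the definition given in the excerpt, which is available throughout the paper.
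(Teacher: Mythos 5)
Your proof is correct and follows essentially the same route as the paper's: split on whether some $r_i$ lies outside $P$, use the primary condition to discard that factor in the first case, and use $sP^n\subseteq I$ in the second. (Your write-up is in fact slightly cleaner, since in the all-in-$P$ case you correctly drop one factor to land in $P^n$.)
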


\begin{proof}
Let $a_1,\dots, a_{n+1} \in R$ with $a_1 \cdots a_{n+1} \in I$.
Suppose that there exists $1 \leq i \leq n$ such that $a_i \notin P$.
Since $I$ is $P$-primary, $\prod_{\substack{1 \leq j \leq n\\ j \neq i}}a_j \in I$,
which means that $s\prod_{\substack{1 \leq j \leq n\\ j \neq i}}a_j \in I$ for all $s \in S$.
For the remainder case, suppose that $a_1, \dots, a_{n+1} \in P$.
Then $sa_1 \cdots a_{n+1} \in sP^n \subseteq I$.
Thus $I$ is an $S$-$n$-absorbing ideal of $R$.
\end{proof}

The following lemma is the generalization of Choi and Walker \cite[Theorem 10]{choi},
which proves one of the three conjectures raised by Anderson and Badawi \cite{Anderson}.

\begin{lemma}\label{conjecture 3}
Let $R$ be a commutative ring $($not necessary with identity$)$,
$S$ a multiplicative subset of $R$ and
$I$ an ideal of $R$.
If $I$ is an $S$-$n$-absorbing ideal of $R$ and
associated to $s$,
then $s(\sqrt{I})^n \subseteq I$.
\end{lemma}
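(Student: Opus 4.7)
Plan:

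This lemma is the $S$-analog of the Anderson--Badawi conjecture for $n$-absorbing ideals, which was established by Choi and Walker \cite{choi}. My strategy is to adapt their argument to the $S$-setting, with the associated element $s$ playing the role of a universal ``tag'' that must be carried through each step of the induction.

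As a first, weaker reduction, I would combine Proposition \ref{colon} with the Choi--Walker theorem directly. Since $I$ is $S$-$n$-absorbing associated to $s$, the proof of Proposition \ref{colon} shows that $I : s^{n+1}$ is an $n$-absorbing ideal of $R$. Applying the Choi--Walker theorem to $I:s^{n+1}$ yields $(\sqrt{I:s^{n+1}})^n \subseteq I:s^{n+1}$; combined with the containment $\sqrt{I} \subseteq \sqrt{I:s^{n+1}}$, this gives $(\sqrt{I})^n \subseteq I:s^{n+1}$, i.e., $s^{n+1}(\sqrt{I})^n \subseteq I$. This establishes the conclusion with $s^{n+1}$ in place of $s$, but falls short of the stated bound by a factor of $s^n$.

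To sharpen to $s(\sqrt{I})^n \subseteq I$, I would proceed by induction on $n$, following the combinatorial skeleton of the Choi--Walker proof. For the base case $n=1$, the ideal $I$ is $S$-prime associated to $s$, and for $a \in \sqrt{I}$ with $a^k \in I$ one shows $sa \in I$ by an inner induction on $k$: apply the $S$-prime property to the factorization $a \cdot a^{k-1} \in I$ to obtain either $sa \in I$ (done) or $sa^{k-1} \in I$, then reduce using the fact (from Proposition \ref{basic property}(5)) that $\sqrt{I}$ is itself $S$-prime associated to the same $s$. For the inductive step, given $a_1, \ldots, a_n \in \sqrt{I}$ with $a_i^{m_i} \in I$, I would consider the product $a_1^{m_1} \cdots a_n^{m_n} \in I$ and apply the $S$-$n$-absorbing property to carefully chosen groupings of these factors into $n+1$ blocks, reducing the total exponent $\sum m_i$ one step at a time and invoking the inductive hypothesis on appropriate ideals built from $I$ and the $a_i$'s.

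The main obstacle is the sharp bookkeeping of powers of $s$. A naive iteration of the $S$-$n$-absorbing property accumulates one factor of $s$ per application, producing a bound like $s^{n+1}$ rather than $s$; collapsing these factors down to a single $s$ requires orchestrating the inductive reductions so that each step's use of the $S$-$n$-absorbing property is aligned with the radical structure of $\sqrt{I}$ (in particular, with the $S$-$n$-absorbing property of $\sqrt{I}$ associated to the same $s$, as guaranteed by Proposition \ref{basic property}(5)). This is precisely the delicate combinatorial step in Choi and Walker's proof, and the template in \cite{choi} is what I would closely mimic.
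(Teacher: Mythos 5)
Your first paragraph is, in essence, the paper's entire proof: the authors invoke Proposition \ref{colon} to get that a colon ideal $I:t$ is $n$-absorbing for some $t\in S$, apply the Choi--Walker theorem to obtain $(\sqrt{I:t})^{n}\subseteq I:t$, and use $\sqrt{I}\subseteq\sqrt{I:t}$ to conclude $t(\sqrt{I})^{n}\subseteq I$. You are right to notice that the proof of Proposition \ref{colon} only produces $t=s^{n+1}$ rather than the associated element $s$ itself, so that this two-line argument literally yields $s^{n+1}(\sqrt{I})^{n}\subseteq I$; the paper writes ``$I:s$ is $n$-absorbing by Proposition \ref{colon}'' and glosses over the exponent. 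The discrepancy is harmless for the paper's purposes (an ideal associated to $s$ is also associated to $s^{n+1}$, and in Theorem \ref{local amalgam} the hypothesis $f(s)J^{n}\subseteq H$ forces $f(s^{n+1})J^{n}\subseteq H$), but your observation that the literal statement requires something more is a fair criticism of the paper's write-up.

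The difficulty lies in your proposed repair, which is a plan rather than a proof and whose key step, as sketched, would not close. The mechanism you name for collapsing the surplus powers of $s$ --- that $\sqrt{I}$ is $S$-$n$-absorbing associated to the same $s$ (Proposition \ref{basic property}(5)) --- only yields memberships in $\sqrt{I}$, whereas the target is membership in $I$ itself. The ingredient that actually does the collapsing is the hypothesis $I\cap S=\emptyset$, which your outline never uses. For instance, for $n=1$: your inner induction on $k$ gives $s^{j}a\in I$ for some $j\ge 1$; if $j\ge 2$, factor $s^{j}a=(s^{j})\cdot a$ and apply the $S$-prime property to get $s^{j+1}\in I$ or $sa\in I$, and the former is impossible precisely because $I\cap S=\emptyset$. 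Some device of this kind is needed at every stage of the induction for general $n$, and your proposal explicitly acknowledges the ``bookkeeping of powers of $s$'' as an unresolved obstacle. Either prove the statement with $s^{n+1}$ (which is all the colon-ideal argument gives and all the paper subsequently needs), or supply the disjointness argument that sharpens it; as written, the second half of your proposal does not yet constitute a proof.
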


\begin{proof}
Suppose that $I$ is an $S$-$n$-absorbing ideal of $R$ and associated to $s$.
Then $I:s$ is an $n$-absorbing ideal of $R$ by Proposition \ref{colon}.
Hence $(\sqrt{I:s})^n\subseteq I:s$ \cite[Theorem 10]{choi}.
As $\sqrt{I}\subseteq \sqrt{I:s}$,
we have $(\sqrt{I})^n\subseteq I:s$.
Thus $s(\sqrt{I})^n\subseteq I$.
\end{proof}

\begin{lemma}{\rm (\cite[Lemma 2.9]{Issoual})}\label{local ring lemma}
Let $(A,\mathfrak{m})$ be a local ring,
$f : A \to B$ be a ring homomorphism and
$J$ be an ideal of $B$ such that
$f^{-1}(Q) \neq \mathfrak{m}$
for every $Q \in {\rm Spec}(B) \setminus V(J)$.
Consider $I$ $($respectively, $H)$ an ideal of $A$ $($respectively, $f(A) +J)$
such that $f(I)J \subseteq H \subseteq J$.
Then the following statements hold:
\begin{enumerate}
\item[(1)]
If $\sqrt{I} = \mathfrak{m}$,
then $\sqrt{I \bowtie^f H} = \mathfrak{m} \bowtie^f J$.
\item[(2)]
$I$ is $\mathfrak{m}$-primary ideal of $A$ if and only if
$I \bowtie^f H$ is $(\mathfrak{m} \bowtie^f J)$-primary ideal of $A \bowtie^f J$.
\end{enumerate}
\end{lemma}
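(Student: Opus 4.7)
The plan is to prove both parts by exploiting the standard description of the prime spectrum of $A\bowtie^f J$: every prime of $A\bowtie^f J$ has one of two forms, either $P\bowtie^f J$ for some $P\in{\rm Spec}(A)$, or $\overline{Q}^f$ for some $Q\in{\rm Spec}(B)\setminus V(J)$. Since $\sqrt{I\bowtie^f H}$ is the intersection of the primes containing it, part (1) reduces to determining exactly which primes of each type lie above $I\bowtie^f H$. The hypothesis that $f^{-1}(Q)\neq\mathfrak{m}$ for every $Q\in{\rm Spec}(B)\setminus V(J)$ is tailor-made to rule out the second family.

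For part (1), I would first observe that a containment $I\bowtie^f H\subseteq \overline{Q}^f$ (with $Q\not\supseteq J$) forces $f(I)\subseteq Q$: the elements $(i,f(i))$ lie in $I\bowtie^f H$ since $0\in H$, so they lie in $\overline{Q}^f$, yielding $f(i)\in Q$. Hence $I\subseteq f^{-1}(Q)$, and since $f^{-1}(Q)$ is prime and $\sqrt{I}=\mathfrak{m}$, one has $f^{-1}(Q)\supseteq\mathfrak{m}$; locality of $A$ then forces $f^{-1}(Q)=\mathfrak{m}$, contradicting the hypothesis. Hence no prime from the second family sits above $I\bowtie^f H$. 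On the other hand, $I\bowtie^f H\subseteq P\bowtie^f J$ reduces to $I\subseteq P$, which by $\sqrt{I}=\mathfrak{m}$ and locality forces $P=\mathfrak{m}$. Therefore $\mathfrak{m}\bowtie^f J$ is the unique prime of $A\bowtie^f J$ above $I\bowtie^f H$, and $\sqrt{I\bowtie^f H}=\mathfrak{m}\bowtie^f J$ as claimed.

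For part (2), I would use the elementary fact that an ideal whose radical is a maximal ideal is automatically primary; since $(A\bowtie^f J)/(\mathfrak{m}\bowtie^f J)\cong A/\mathfrak{m}$ is a field, $\mathfrak{m}\bowtie^f J$ is maximal in $A\bowtie^f J$. The forward implication is then immediate from (1). For the converse, suppose $I\bowtie^f H$ is $(\mathfrak{m}\bowtie^f J)$-primary, so $\sqrt{I\bowtie^f H}=\mathfrak{m}\bowtie^f J$. Given $x\in\mathfrak{m}$, the element $(x,f(x))$ lies in $\mathfrak{m}\bowtie^f J$, so $(x^n,f(x^n))\in I\bowtie^f H$ for some $n$, which projects to $x^n\in I$; hence $\mathfrak{m}\subseteq\sqrt{I}$, and the reverse inclusion is forced by the properness of $I$ in the local ring $A$. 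Thus $\sqrt{I}=\mathfrak{m}$ and $I$ is $\mathfrak{m}$-primary. The main obstacle throughout is the invocation of the hypothesis on $f^{-1}(Q)$ to eliminate the second family of primes; everything else is routine bookkeeping with the defining equations of $I\bowtie^f H$ and $\overline{Q}^f$.
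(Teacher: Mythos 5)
Your proof is correct. Note first that the paper does not prove this lemma at all: it is imported verbatim as \cite[Lemma 2.9]{Issoual}, so there is no in-paper argument to compare against; your write-up is essentially the standard (and, as far as I can tell, the original) proof. The key external input you rely on is the description of ${\rm Spec}(A\bowtie^f J)$ as $\{P\bowtie^f J \mid P\in{\rm Spec}(A)\}\cup\{\overline{Q}^f\mid Q\in{\rm Spec}(B)\setminus V(J)\}$, which is \cite[Proposition 2.6]{Anna}, a reference the paper already invokes; it would be worth citing it explicitly, since it is the only nontrivial ingredient. All the individual steps check out: $(i,f(i))\in I\bowtie^f H$ because $0\in H$, so $I\bowtie^f H\subseteq\overline{Q}^f$ forces $f^{-1}(Q)\supseteq\sqrt{I}=\mathfrak{m}$ and hence $f^{-1}(Q)=\mathfrak{m}$, contradicting the hypothesis; $I\bowtie^f H\subseteq P\bowtie^f J$ is equivalent to $I\subseteq P$ because $H\subseteq J$; and in the converse of (2) the properness of $I$ does follow from the properness of $I\bowtie^f H$ (otherwise $(1,f(1))\in I\bowtie^f H$). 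The only cosmetic caveat is that the paper's Section 5 frets about $A\bowtie^f J$ possibly lacking an identity; in the unital setting in which \cite{Anna} and \cite{Issoual} work, $(1_A,1_B)$ is the identity, so your use of ``radical equals intersection of primes over the ideal'' is legitimate.
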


We conclude this section with the following results.

\begin{theorem}\label{local amalgam}
Let $A,B$ be commutative rings with identity and
let $f :A \to B$ be a homomorpism.
Let $(A,\mathfrak{m})$ be a quasi-local ring,
$S$ a multiplicative subset of $A$ and
$I$ an $\mathfrak{m}$-primary ideal of $A$.
Suppose that $J$ is an ideal of $B$ such that
$f^{-1}(Q) \neq \mathfrak{m}$ for every $Q \in {\rm Spec}(B)\setminus V(J)$ and
$H$ is an ideal of $f(A) + J$ such that $0 = f(\mathfrak{m})J \subseteq H \subseteq J$.
If there exists $s \in S$ such that $f(s) J^n \subseteq H$,
then the following assertions are equivalent.
\begin{enumerate}
\item[(1)]
$I$ is an $S$-$n$-absorbing ideal of $A$ and
associated to $s$.
\item[(2)]
$I \bowtie^f H$ is an $S^{\bowtie^f}$-$n$-absorbing ideal of $A \bowtie^f J$ and
associated to $(s,f(s))$.
\end{enumerate}
\end{theorem}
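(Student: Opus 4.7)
The plan is to prove $(1)\Leftrightarrow(2)$ by separately verifying each implication, making heavy use of the lemmas already established. For $(2)\Rightarrow(1)$, I would exploit the diagonal embedding $\iota : A \to A \bowtie^f J$ given by $a \mapsto (a, f(a))$, which sends $S$ into $S^{\bowtie^f}$ and $I$ into $I \bowtie^f H$. Given any witness product $a_1 \cdots a_{n+1} \in I$, the product of the lifts $(a_i, f(a_i))$ equals $(a_1 \cdots a_{n+1}, f(a_1 \cdots a_{n+1}))$, which lies in $I \bowtie^f H$ (since $0 \in H$). The $S^{\bowtie^f}$-$n$-absorbing hypothesis then furnishes an index $j$ such that $(s, f(s)) \prod_{i \neq j}(a_i, f(a_i)) \in I \bowtie^f H$; reading off the first coordinate yields $s \prod_{i \neq j} a_i \in I$, as needed.

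For the forward direction $(1)\Rightarrow(2)$, my strategy is to reduce the claim to Lemma \ref{P-primary lemma}. By Lemma \ref{local ring lemma}(2), $I \bowtie^f H$ is a $(\mathfrak{m} \bowtie^f J)$-primary ideal of $A \bowtie^f J$, so it suffices to verify the inclusion $(s, f(s))\,(\mathfrak{m} \bowtie^f J)^n \subseteq I \bowtie^f H$. The crucial observation, which is where I expect the real work to lie, is that the hypothesis $f(\mathfrak{m})J = 0$ collapses the expansion of an $n$-fold product from $\mathfrak{m} \bowtie^f J$: in the second coordinate, every mixed monomial contains some factor $f(m_i)\,j_{i'} \in f(\mathfrak{m})J = 0$, so
\[
(m_1, f(m_1) + j_1) \cdots (m_n, f(m_n) + j_n) = \bigl(m_1 \cdots m_n,\; f(m_1 \cdots m_n) + j_1 \cdots j_n\bigr).
\]

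With this simplification in hand, multiplying by $(s, f(s))$ yields first coordinate $s\,m_1 \cdots m_n \in s\,\mathfrak{m}^n$ and a ``pure-$J$'' summand $f(s)\,j_1 \cdots j_n \in f(s)\,J^n$ in the second coordinate. From Lemma \ref{conjecture 3} together with $\sqrt{I} = \mathfrak{m}$ (since $I$ is $\mathfrak{m}$-primary) I obtain $s\,\mathfrak{m}^n \subseteq s(\sqrt{I})^n \subseteq I$, while the standing hypothesis $f(s)\,J^n \subseteq H$ takes care of the other piece; extending linearly to arbitrary sums shows $(s,f(s))\,(\mathfrak{m}\bowtie^f J)^n \subseteq I \bowtie^f H$. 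Lemma \ref{P-primary lemma} then delivers precisely that $I \bowtie^f H$ is $S^{\bowtie^f}$-$n$-absorbing with associated element $(s, f(s))$. The only routine checks left are disjointness, namely $I \cap S = \emptyset$ implies $(I \bowtie^f H)\cap S^{\bowtie^f}=\emptyset$, which follows immediately from the definitions. The main obstacle is the product computation, and once the vanishing of mixed monomials is seen, everything else follows by assembling the prior lemmas.
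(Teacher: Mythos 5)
Your proof is correct, and the two directions relate to the paper's argument differently. For $(1)\Rightarrow(2)$ you follow essentially the same route as the paper: Lemma \ref{conjecture 3} gives $s\mathfrak{m}^n\subseteq I$, Lemma \ref{local ring lemma} identifies $I\bowtie^f H$ as $(\mathfrak{m}\bowtie^f J)$-primary with radical $\mathfrak{m}\bowtie^f J$, and Lemma \ref{P-primary lemma} closes the argument. The one place you add value is that the paper simply asserts $(\mathfrak{m}\bowtie^f J)^n=\mathfrak{m}^n\bowtie^f J^n$, whereas you justify it by observing that every mixed monomial in the expansion of $(f(m_1)+j_1)\cdots(f(m_n)+j_n)$ contains a factor from $f(\mathfrak{m})J=0$; this is exactly the point where the hypothesis $f(\mathfrak{m})J=0$ is consumed, and making it explicit is an improvement. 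For $(2)\Rightarrow(1)$ you take a genuinely different and more elementary path: instead of running the lemma machinery in reverse (the paper deduces $(s,f(s))(\mathfrak{m}\bowtie^f J)^n\subseteq I\bowtie^f H$ from Lemmas \ref{conjecture 3} and \ref{local ring lemma}, reads off $s\mathfrak{m}^n\subseteq I$, and reapplies Lemma \ref{P-primary lemma}), you lift a witness product $a_1\cdots a_{n+1}\in I$ along the diagonal $a\mapsto(a,f(a))$ and project the resulting absorption back to the first coordinate. Your converse is shorter, transfers the associated element directly, and in fact needs none of the hypotheses on $I$, $H$, or $J$ for that direction; the paper's version keeps the two directions symmetric through the primary-ideal lemmas but proves nothing more. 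Both arguments are valid.
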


\begin{proof}
Suppose that $I$ is an $S$-$n$-absorbing ideal of $A$ and associated to $s$ for some $s \in S$.
By Lemma \ref{conjecture 3}, we obtain
$s (\sqrt{I})^n \subseteq I$.
Since $I$ is an $\mathfrak{m}$-primary ideal,
we obtain $s\mathfrak{m}^n \subseteq I$.
Hence by Lemma \ref{local ring lemma}(1),
$(s,f(s))(\sqrt{I\bowtie^f H})^n = (s,f(s))(\mathfrak{m}\bowtie^f J)^n
= (s,f(s))(\mathfrak{m}^n \bowtie^f J^n) \subseteq (I \bowtie^f H)$.
Also, by Lemma \ref{local ring lemma}(2),
$I \bowtie^f H$ is an $(\mathfrak{m} \bowtie^f J)$-primary ideal of $A \bowtie^f J$.
Hence by Lemma \ref{P-primary lemma},
$I \bowtie^f H$ is an $S^{\bowtie^f}$-$n$-absorbing ideal of $A \bowtie^f J$ and associated to $(s, f(s))$.
For the converse, suppose that
$I \bowtie^f H$ is an $S^{\bowtie^f}$-$n$-absorbing ideal of $A \bowtie^f J$ and associated to $(s, f(s))$.
By Lemma \ref{conjecture 3},
$(s,f(s))(\sqrt{I \bowtie^f H})^n \subseteq I \bowtie^f H$.
Since $I$ is an $\mathfrak{m}$-primary ideal of $R$,
$(s,f(s))(\mathfrak{m} \bowtie^f J)^n \subseteq I \bowtie^f H$ by Lemma \ref{local ring lemma}(1).
Hence $s\mathfrak{m}^n \subseteq I$,
and thus $I$ is an $S$-$n$-absorbing ideal of $A$ and associated to $s$
by Lemma \ref{P-primary lemma}.
\end{proof}

\acknowledgments
H. S. Choi was supported by the National Research Foundation of Korea (NRF) grant funded by the
Korea government (MSIT)(No. 2022R1C1C2009021).

\end{document}